\title{A thick subcategory theorem for modules over certain ring spectra}
\author{Akhil Mathew}
\email{amathew@math.berkeley.edu}
\address{Department of Mathematics \\ University of California \\  Berkeley, CA 94720}
\urladdr{http://math.berkeley.edu/~amathew}
\date{\today}
\newtheorem{lemma}{Lemma}[section]
\newtheorem{theorem}[lemma]{Theorem}
\newtheorem{corollary}[lemma]{Corollary}
\newtheorem{proposition}[lemma]{Proposition}
\theoremstyle{definition}
\newtheorem{definition}[lemma]{Definition}
 \theoremstyle{definition}
\newtheorem{remark}[lemma]{Remark}
\newcommand{\spec}{\mathrm{Spec}}
\newcommand{\im}{\mathrm{Im}}
\renewcommand{\sp}{\mathrm{Sp}}
\newcommand{\st}{\mathrm{Stack}}
\newcommand{\otop}{\mathcal{O}^{\mathrm{top}}}
\newcommand{\mell}{M_{{ell}}}
\renewcommand{\hom}{\mathrm{Hom}}
\renewcommand{\rightrightarrows}{\begin{smallmatrix} \to \\
\to \end{smallmatrix} }
\newcommand{\triplearrows}{\begin{smallmatrix} \to \\ \to \\ 
\to \end{smallmatrix} }
\newcommand{\TMF}{\mathrm{TMF}}
\renewcommand{\ell}{\mathrm{Ell}}
\newcommand{\md}{\mathrm{Mod}}
\newcommand{\qcoh}{\mathrm{QCoh}}
\newcommand{\pic}{\mathrm{Pic}}
\begin{document}

\begin{abstract}
We classify thick subcategories of the $\infty$-categories of perfect modules over ring spectra which arise as
functions on even periodic derived stacks satisfying affineness
and regularity conditions. For example, we show that the thick subcategories of perfect modules
over $\TMF$ are in natural bijection with the subsets of the
 underlying space of the moduli stack of elliptic curves which are closed
 under specialization.  
\end{abstract}

\maketitle

\section{Introduction}

\newcommand{\spp}{\sp_{(p)}^{\omega}}
\subsection{Generalities}
Let $\mathcal{C}$ be a stable $\infty$-category. Recall that: 

\begin{definition} 
A full subcategory
$\mathcal{C}' \subset \mathcal{C}$ is \emph{thick} if $\mathcal{C}'$ is a
stable subcategory (i.e., it is closed under finite limits and colimits), and
$\mathcal{C}'$ is closed under retracts.
\end{definition} 
Note that this definition only depends on the underlying \emph{homotopy category} of
$\mathcal{C}$ and its triangulated structure, and can be studied without the
language of $\infty$-categories. 
Since many properties of objects in
$\mathcal{C}$ are controlled by thick subcategories, it is generally very
useful to have classifications of the possible thick subcategories of
$\mathcal{C}$. 

In the setting of the stable $\infty$-category $\spp$ of finite $p$-local spectra, the following fundamental result
was proved by Hopkins and Smith: 
\begin{theorem}[Hopkins-Smith \cite{HS}] \label{HSthick}
There is a descending sequence of thick subcategories
\[ \spp = \mathcal{C}_0 \supsetneq \mathcal{C}_1 \supsetneq \mathcal{C}_2
\supsetneq
\dots, \]
such that every nonzero thick subcategory of 
$\spp$ is one of the $\mathcal{C}_i$. 
\end{theorem}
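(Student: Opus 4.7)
The plan is to define a descending chain of thick subcategories via Morava $K$-theories, then classify all nonzero thick subcategories using the Devinatz--Hopkins--Smith nilpotence theorem together with the Hopkins--Smith periodicity theorem.

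For each $n \geq 0$, let $K(n)$ denote the $n$-th Morava $K$-theory at the prime $p$ (with the convention $K(0) = H\mathbb{Q}$), and define
\[ \mathcal{C}_n = \{ X \in \spp : K(i)_*(X) = 0 \text{ for all } i < n \}. \]
Since each $K(i)$ is a graded field with a K\"unneth isomorphism on finite spectra, every $\mathcal{C}_n$ is closed under cofiber sequences and retracts and is therefore thick, with $\mathcal{C}_n \supset \mathcal{C}_{n+1}$. To see the inclusions are strict, one must exhibit, for each $n$, a \emph{type $n$} finite spectrum (one with $K(i)_*(X) = 0$ for $i < n$ but $K(n)_*(X) \neq 0$); such spectra are constructed inductively as iterated cofibers of $v_n$ self-maps (Smith--Toda complexes), whose existence is a consequence of the periodicity theorem below.

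The core of the theorem is showing that every nonzero thick subcategory $\mathcal{D} \subset \spp$ coincides with some $\mathcal{C}_n$. Let $n$ be the smallest integer for which some $Y \in \mathcal{D}$ has $K(n)_*(Y) \neq 0$; then $\mathcal{D} \subset \mathcal{C}_n$ holds automatically by the definition of $n$. For the reverse inclusion $\mathcal{C}_n \subset \mathcal{D}$ the two deep inputs are: (i) the \emph{nilpotence theorem}, which asserts that a finite spectrum with vanishing $K(i)$-homology for every $i$ is contractible, equivalently that a self-map of a finite spectrum inducing zero on $MU_*$ is smash nilpotent; and (ii) the \emph{periodicity theorem}, which asserts that every type $n$ finite spectrum admits a $v_n$ self-map, namely a self-map inducing an isomorphism on $K(n)$-homology and zero on $K(i)$-homology for $i \neq n$. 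Given $X \in \mathcal{C}_n$ and a type $n$ spectrum $Y \in \mathcal{D}$ with $v_n$ self-map $v$, one smashes $X$ with $Y$ and plays the telescope of $1 \wedge v$ against nilpotence: nilpotence forces the comparison between $X$ and a suitable cofiber construction built out of $Y$ inside $\mathcal{D}$ to terminate after finitely many steps, so that $X \in \mathcal{D}$.

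The main obstacle is the periodicity theorem itself, whose proof is a substantial undertaking in its own right, combining the nilpotence theorem with intricate Adams spectral sequence computations over $BP$ to produce the required asymptotically unique $v_n$ self-maps on every type $n$ finite spectrum.
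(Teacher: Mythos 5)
The paper does not prove this theorem; it cites Hopkins--Smith \cite{HS}, so there is no internal proof to compare your proposal against. Your sketch is a reasonable high-level outline of the Hopkins--Smith argument: the definition of $\mathcal{C}_n$ via Morava $K$-theories, thickness via the K\"unneth isomorphism, strictness of the inclusions via type-$n$ complexes whose existence follows inductively from the periodicity theorem, and the reduction of the classification to the nilpotence theorem. You also correctly note, using Ravenel's theorem that $K(n)$-acyclicity implies $K(n-1)$-acyclicity for finite spectra, that the minimal $n$ realized on $\mathcal{D}$ produces a genuine type-$n$ spectrum in $\mathcal{D}$.

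One part of your sketch is imprecise and worth flagging: the inclusion $\mathcal{C}_n\subset\mathcal{D}$ does not really run through ``the telescope of $1\wedge v$'' for a $v_n$ self-map $v$ on $Y$. That self-map is the mechanism for \emph{producing} type-$(n+1)$ spectra and hence strict inclusions, not for showing $X\in\langle Y\rangle$. The correct mechanism is the class-invariance argument: one takes the fiber $W\to S^0\to Y\wedge\mathbb{D}Y$, observes that for $X\in\mathcal{C}_n$ and $Y$ of type $n$ the map $W\wedge\mathbb{D}X\to\mathbb{D}X$ is $K(i)$-trivial for all $i$, and then applies smash nilpotence to conclude that $W^{\wedge k}\wedge\mathbb{D}X\to\mathbb{D}X$ is null for $k\gg 0$; thus $X$ is a retract of an iterated extension built from $Y\wedge\mathbb{D}Y$, which lies in $\mathcal{D}$. (This is precisely the pattern the paper abstracts in \Cref{firstclaim} and attributes to \cite{HS}.) With that correction your outline matches the standard proof.
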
 

The subcategories $\mathcal{C}_i$ can be described  in terms of the geometry of
the moduli stack $M_{FG}$ of formal groups. Namely, recall that any finite spectrum defines a
$\mathbb{Z}/2$-graded coherent sheaf on $M_{FG}$ (via its complex bordism). 
$M_{FG}$ localized at $p$ has a filtration by \emph{height}
\[ M_{FG} \supset M_{FG}^{\geq 1} \supset M_{FG}^{\geq 2} \supset \dots,  \]
such that the successive ``differences'' are quotients of a point by an
ind-\'etale group scheme (and in particular, this tower cannot be
refined); for a discussion of this, see \cite{goerssqc}.
 The moduli stack $M_{FG}^{\geq i}$ parametrizes formal groups of height at
 least $i$, and $M_{FG}^{\geq 1}$ parametrizes formal groups over a
 $\mathbb{F}_p$-algebra. A spectrum $X \in \spp$ belongs to $\mathcal{C}_i$ precisely when its
associated quasi-coherent sheaf is set-theoretically supported on $M_{FG}^{\geq
i}$. For instance, $\mathcal{C}_1$ consists of the $p$-torsion spectra. 

In stable homotopy theory, \Cref{HSthick} is extremely useful as a sort of
``Tauberian'' theorem. If one wishes to prove a property of all finite
($p$-local) spectra,
e.g., of the sphere, then it suffices to show that the property is thick and
that a single
finite spectrum with nontrivial rational homology satisfies it. 

After the results of \cite{HS}, thick subcategories have been studied in a number of other
settings. For instance, given a commutative ring $R$, 
one may consider the $\infty$-category $D_{\mathrm{perf}}(R)$ of perfect
complexes of $R$-modules, and one may consider the 
thick subcategories of $D_{\mathrm{perf}}(R)$. Given a subset $Z \subset \spec
R$ closed under specialization (equivalently, a union of closed subsets), one may define a thick subcategory of $D_{\mathrm{perf}}(R)$ consisting of
complexes whose cohomologies are set-theoretically supported on $Z$. 
When $R$ is noetherian, one has the following theorem: 

\begin{theorem}[Hopkins-Neeman \cite{global, neeman}] \label{HN}
The above construction establishes a bijection between
thick subcategories of $D_{\mathrm{perf}}(R)$ and subsets
of $\spec R$ closed under specialization. 
\end{theorem}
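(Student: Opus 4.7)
The plan is to construct mutually inverse maps between thick subcategories of $D_{\mathrm{perf}}(R)$ and specialization-closed subsets of $\mathrm{Spec}(R)$. To a thick subcategory $\mathcal{T}$ I would associate $Z(\mathcal{T}) = \bigcup_{X \in \mathcal{T}} \mathrm{supp}(X)$, and to a specialization-closed $Z$ the subcategory $\mathcal{T}(Z) = \{X \in D_{\mathrm{perf}}(R) : \mathrm{supp}(X) \subset Z\}$. Well-definedness is essentially formal: the support of a perfect complex, being the union of the supports of its finitely many finitely generated cohomology modules, is closed in $\mathrm{Spec}(R)$; and since support is subadditive on cofibers and preserved by retracts, $\mathcal{T}(Z)$ is thick. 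The identity $Z(\mathcal{T}(Z)) = Z$ follows from the observation that, using noetherianity, every closed subset $V(I)$ is realized as the support of the Koszul complex $K(f_1, \dots, f_r)$ on a finite set of generators of $I$.

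The main content lies in the reverse inclusion $\mathcal{T}(Z(\mathcal{T})) \subset \mathcal{T}$: any perfect $X$ with $\mathrm{supp}(X) \subset Z(\mathcal{T})$ must actually lie in $\mathcal{T}$. The key local input is a nilpotence lemma coming from compactness: if $X \in D_{\mathrm{perf}}(R)$ and $X[f^{-1}] = 0$, then $\mathrm{id}_X$ factors through the zero telescope $\mathrm{colim}(X \xrightarrow{f} X \xrightarrow{f} \cdots)$ at some finite stage, so $f^n \cdot \mathrm{id}_X = 0$ for some $n \geq 1$. The cofiber sequence $X \xrightarrow{f^n} X \to X \otimes_R R/f^n$ then splits, exhibiting $X$ as a retract of $X \otimes_R R/f^n$, which in turn lies in the thick subcategory generated by $X \otimes_R R/f$ via the evident filtration of $R/f^n$. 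Iterating over a generating set $f_1, \dots, f_r$ of an ideal cutting out $\mathrm{supp}(X)$ yields the central Koszul-generation statement: whenever $\mathrm{supp}(X) \subset V(I)$, the complex $X$ lies in the thick subcategory generated by $X \otimes_R K(f_1, \dots, f_r)$.

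To finish, I would proceed by noetherian induction on $\mathrm{supp}(X)$. Choose a generic point $\mathfrak{p}$ of some irreducible component of $\mathrm{supp}(X)$; since $\mathfrak{p} \in Z(\mathcal{T})$, some $Y \in \mathcal{T}$ satisfies $\mathfrak{p} \in \mathrm{supp}(Y)$, hence $V(\mathfrak{p}) \subset \mathrm{supp}(Y)$. Using the Koszul-generation lemma, and the fact that tensoring with a perfect complex preserves thick subcategories, one produces inside the thick subcategory generated by $Y$ a perfect complex capturing the ``$V(\mathfrak{p})$-layer'' of $X$, leaving a residual complex of strictly smaller support to which the inductive hypothesis applies. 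The main obstacle, as is traditional with this theorem, is precisely this final bootstrapping step: cleanly translating the local statement ``$X$ lies in the thick subcategory generated by $X \otimes_R K(I)$'' into the global membership $X \in \mathcal{T}$ requires careful control of how thickly generated subcategories interact with tensor products and of the termination of the induction, which is where noetherianity of $R$ (finiteness of generators and of chains of closed subsets) enters essentially.
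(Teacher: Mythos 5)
The paper does not actually prove \Cref{HN}; it cites it as background from \cite{global,neeman} and then proves an analogous statement for even periodic $E_\infty$-rings in \Cref{thickaffine}, \Cref{firstclaim}, \Cref{secondclaim}. So the useful comparison is with that proof.

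Your well-definedness discussion, the realization $Z(\mathcal{T}(Z)) = Z$ via Koszul complexes, and the Koszul-generation lemma are all sound: the compactness argument correctly yields $f^n \cdot \mathrm{id}_X = 0$ from $X[f^{-1}] = 0$, the resulting splitting exhibits $X$ as a retract of $X \otimes R/f^n$, and the octahedral filtration of $R/f^n$ by copies of $R/f$ then iterates to give $X \in \langle X \otimes K(f_1,\dots,f_r)\rangle$ whenever $\mathrm{supp}(X) \subset V(f_1,\dots,f_r)$. This is the Hopkins--Thomason route, and so far it matches the paper's \Cref{secondclaim} and part of its toolbox.

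The gap is in the final step, and it is not merely a matter of ``careful control of the induction.'' You need the statement: if $\mathrm{supp}(X) \subset \mathrm{supp}(Y)$ then $X$ lies in the thick subcategory $\langle Y\rangle$ generated by $Y$. Your Koszul lemma gives $X \in \langle K(I)\rangle$ for $I$ cutting out $\mathrm{supp}(X)$, and since thick subcategories here are tensor ideals you can tensor freely; but this reduces the problem to showing $K(I) \in \langle Y\rangle$, which is an instance of the same problem (both have support contained in $\mathrm{supp}(Y)$), so the noetherian induction as you describe it does not close up. The missing ingredient is a smash-nilpotence argument based on the fiber of the unit map: take the fiber sequence $F \xrightarrow{\phi} R \to Y \otimes \mathbb{D}Y$, observe that $\phi$ vanishes after base change to every residue field $k(\mathfrak{p})$ with $\mathfrak{p} \in \mathrm{supp}(Y)$, deduce (by compactness and a telescope argument applied to $\mathrm{End}(X)\otimes\mathbb{D}F^{\otimes\bullet}$) that $1_X \otimes \phi^{\otimes r}$ is null for $r \gg 0$, and conclude that $X$ is a retract of the cofiber of $1_X \otimes \phi^{\otimes r}$, which lies in $\langle Y\rangle$ because the cofiber of $\phi$ does. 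This is exactly the paper's \Cref{firstclaim}. (Neeman's cited proof is different again: it classifies smashing localizations of the unbounded derived category $D(R)$ via local cohomology functors and then passes to compact objects; it does not go through the Koszul and nilpotence steps at all.) Without the fiber-of-unit-map step or a substitute for it, the ``capture the $V(\mathfrak{p})$-layer and induct on smaller support'' plan does not have an obvious implementation.
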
 

Given a symmetric monoidal stable $\infty$-category $(\mathcal{C},
\otimes, \mathbf{1})$, 
one may also consider \emph{thick tensor ideals}: these are thick subcategories 
$\mathcal{D} \subset \mathcal{C}$ such that if $X \in \mathcal{D}, Y \in
\mathcal{C}$, then the 
tensor product $X \otimes Y$ belongs to $\mathcal{D}$ as well. Thick
tensor ideals have been extensively investigated \cite{Balmer}, and one
can, for instance, arrange the
``prime'' ones into
a topological space. 

For example, for the perfect derived category of a noetherian scheme, thick
tensor-ideals were classified in work of Thomason \cite{Thomason} generalizing
\Cref{HN}. 
In this case, again thick tensor-ideals are classified in terms of the
\emph{supports} of objects. The notion of ``support'' has been axiomatized in
work of Benson-Iyengar-Krause \cite{BIK} with a view towards diverse
applications, including stable module categories for finite groups, for
triangulated categories with an action of a commutative ring as endomorphisms
of the identity. 
In our setting, we will be working with symmetric monoidal
stable $\infty$-categories (such as $D_{\mathrm{perf}}(R)$), where the unit
object generates $\mathcal{C}$ under finite colimits, so thick subcategories are
automatically tensor ideals. 
Moreover, the ``base ring'' will be somewhat complicated, so it will be
convenient to work locally, using techniques of \cite{MM}.

\subsection{Methods}
The goal of this paper is to classify thick subcategories in a different setting:  that is, for
$\infty$-categories of perfect modules over structured ring spectra which
arise as global sections of the structure sheaf on  even periodic 
derived stacks. Such ring spectra play an important role in stable homotopy
theory: for instance, $\TMF$ arises as the ring of functions on a derived
version of the moduli stack of elliptic curves. 

Our goal is to understand the structure of the $\infty$-category
$\md^\omega(\TMF)$ of perfect $\TMF$-modules, for instance. One difficulty in
doing so is that the algebraic structure of $\pi_* \TMF$ is extremely
complicated, while the analysis of ring spectra and modules over them is greatly
simplified when one has nice (e.g., regular) homotopy rings. Nonetheless, we
know that $\TMF$ is obtained as the homotopy inverse limit of a diagram of
\emph{elliptic spectra}, 
which are even periodic $E_\infty$-rings whose formal group is associated to an
elliptic curve classified by an \'etale map to $M_{ell}$. 
More precisely, there is a \emph{derived Deligne-Mumford stack} $(M_{ell}, \otop)$, whose
underlying ordinary stack is the moduli stack of elliptic curves, such that the
global sections of the structure sheaf $\otop$ gives $\TMF$. 
These elliptic spectra obtained by evaluating $\otop$ on an affine scheme
\'etale over $M_{ell}$ are much better behaved: not only are they even
periodic, but their homotopy rings are regular noetherian. The classification
of thick subcategories for perfect modules over \emph{them} is significantly
simpler by the following result which will be proved in \Cref{thickaffine} below. 

\begin{theorem} 
Suppose $R$ is an even periodic $E_{\infty}$-ring with $\pi_0(R)$ regular
noetherian. Then there is a canonical bijection between thick subcategories of
the $\infty$-category of perfect $R$-modules and subsets of $\spec \pi_0(R)$
closed under specialization.
\end{theorem}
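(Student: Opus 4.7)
The plan is to reduce to \Cref{HN} applied to $\pi_0 R$. Even periodicity gives $\pi_* R \cong \pi_0 R[u^{\pm 1}]$ with $|u| = 2$, so $\spec \pi_0 R$ and the underlying topological space of $\spec \pi_* R$ coincide. For a perfect $R$-module $M$, define its support as $\mathrm{supp}(M) := \mathrm{supp}_{\pi_0 R}(\pi_0 M) \cup \mathrm{supp}_{\pi_0 R}(\pi_1 M) \subset \spec \pi_0 R$; this is closed since $\pi_0 M$ and $\pi_1 M$ are finitely generated over the noetherian ring $\pi_0 R$. The candidate bijection sends a thick subcategory $\mathcal{T}$ to $Z(\mathcal{T}) := \bigcup_{M \in \mathcal{T}} \mathrm{supp}(M)$, with inverse $Z \mapsto \Psi(Z) := \{M \in \md^{\omega}(R) : \mathrm{supp}(M) \subset Z\}$, the latter being manifestly a thick subcategory.

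The key construction is a \emph{realization functor} from finitely generated $\pi_0 R$-modules to $\md^{\omega}(R)$. Given such an $N$, regularity of $\pi_0 R$ provides a finite free resolution $P_\bullet \to N$. Using that $R$ is even periodic, so that $\pi_0$ induces a bijection on homotopy classes of maps between finite free $R$-modules, one lifts $P_\bullet$ inductively to a bounded complex $\tilde P_\bullet$ of free $R$-modules; the higher coherence obstructions lie in $\mathrm{Ext}$ groups that vanish thanks to regularity. The totalization $\widetilde N$ is a perfect $R$-module with $\pi_0 \widetilde N \cong N$, $\pi_1 \widetilde N = 0$, and $\mathrm{supp}(\widetilde N) = \mathrm{supp}_{\pi_0 R}(N)$. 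Inducting up the Postnikov tower of a given $M \in \md^{\omega}(R)$, using the same lifting principle to build $k$-invariants and the finite global dimension of $\pi_0 R$ to ensure termination, one shows that every perfect $R$-module lies in the thick subcategory generated by realizations $\widetilde N$ and their suspensions.

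With this in hand, the bijection follows. For any specialization-closed $Z$, each closed $V(I) \subset Z$ is attained by $\widetilde{\pi_0 R / I}$, so $Z(\Psi(Z)) = Z$. The main obstacle is the converse direction $\Psi(Z(\mathcal{T})) \subset \mathcal{T}$: given $M \in \md^{\omega}(R)$ with $\mathrm{supp}(M) \subset Z(\mathcal{T})$, we must show $M \in \mathcal{T}$. Since every perfect $M$ lies in the thick subcategory generated by realizations, this reduces to proving that $\widetilde N \in \mathcal{T}$ whenever $\mathrm{supp}_{\pi_0 R}(N) \subset Z(\mathcal{T})$. The set of such $N$ corresponds to a thick subcategory of $D_{\mathrm{perf}}(\pi_0 R)$ whose associated specialization-closed subset contains $Z(\mathcal{T})$, so by \Cref{HN} applied to $\pi_0 R$, it contains every $N$ supported in $Z(\mathcal{T})$, completing the argument. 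The technical heart of the proof is thus the well-definedness of the realization functor and the Postnikov decomposition, both of which hinge on the regularity hypothesis on $\pi_0 R$.
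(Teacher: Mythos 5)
Your proposal takes a genuinely different route from the paper, but it has a real gap at its center. The paper's proof follows the Hopkins--Smith paradigm: for each prime $\mathfrak{p} \subset \pi_0 R$ it builds a ``residue field'' $K(\mathfrak{p}) = R_{\mathfrak{p}}/(x_1,\dots,x_n)$ using a regular system of parameters, proves a Bousfield decomposition ($M \simeq 0$ iff all $K(\mathfrak{p})_*M = 0$) and a corresponding nilpotence detection result, and then deduces that if $\supp N \subset \supp M$ then $N$ lies in the thick subcategory generated by $M$. The ``existence'' half is handled by Koszul complexes $R/(x_1,\dots,x_n)$. Your proposal tries to bypass the nilpotence machinery entirely by reducing to Hopkins--Neeman over $\pi_0 R$ via a realization functor and a ``Postnikov decomposition.'' That reduction is where the argument breaks.

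The problematic step is the sentence ``Since every perfect $M$ lies in the thick subcategory generated by realizations, this reduces to proving that $\widetilde N \in \mathcal{T}$ whenever $\mathrm{supp}_{\pi_0 R}(N) \subset Z(\mathcal{T})$.'' Read literally, the first clause is trivially true and useless ($R$ itself is a realization, and $\md^\omega(R) = \langle R \rangle$). What you actually need is that $M$ lies in the thick subcategory generated by realizations $\widetilde N$ with $\supp N \subset \supp M$ --- for instance, that there is a map $\widetilde{\pi_0 M} \to M$ inducing an isomorphism on $\pi_0$, so that $M$ is an extension of $\widetilde{\pi_0 M}$ by an even module realizing $\pi_1 M$. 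But perfect modules over a periodic ring spectrum have no finite Postnikov tower, and producing such a map is exactly the kind of lifting problem whose obstructions live in the groups $\mathrm{Ext}^{n,n-1}_{\pi_* R}(\pi_* M, \pi_* R)$ and their cousins. Regularity of $\pi_0 R$ does \emph{not} force these to vanish once the global dimension exceeds $2$ or so; the assertion that ``the higher coherence obstructions lie in $\mathrm{Ext}$ groups that vanish thanks to regularity'' is unsubstantiated and, for general regular $\pi_0 R$ of higher dimension, not obviously true. The same issue afflicts the construction of the realization functor itself for modules of projective dimension $\geq 2$. In the paper this difficulty never arises, because one only realizes Koszul quotients $R/(x_1,\dots,x_n)$ (where no obstruction theory is needed), and the step ``same support implies same thick subcategory'' is proved via the residue fields $K(\mathfrak{p})$ and nilpotence detection rather than via realization of arbitrary modules. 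Your proposal, as written, replaces the heart of the argument with an assertion that is not established and is the genuinely hard point; if you could carry out the realization and Postnikov decomposition carefully, the reduction to Hopkins--Neeman would indeed be elegant, but the obstruction-theoretic content cannot be waved away by appealing to regularity alone.
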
 

Therefore, we shall approach the classification of thick subcategories of
$\md^\omega(\TMF)$ by relating $\md^\omega( \TMF)$ to the $\infty$-categories
of perfect modules 
over elliptic spectra. The essential ingredients for doing this are in
\cite{MM}. In that paper, L. Meier and the author prove:

\begin{theorem}[\cite{MM}] The $\infty$-category of
$\TMF$-modules $\md(\TMF)$ is equivalent the $\infty$-category of quasi-coherent sheaves on the derived
moduli stack of elliptic curves.
\end{theorem}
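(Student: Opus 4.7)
The plan is to prove the equivalence by \'etale descent on the derived Deligne--Mumford stack $(\mell, \otop)$. Pick an \'etale affine atlas $\spec A \to (\mell, \otop)$, where $A = \otop(\spec A)$ is an elliptic spectrum, and form its \v{C}ech nerve, a cosimplicial $E_\infty$-ring $A^\bullet$ with each term obtained by evaluating $\otop$ on the corresponding affine scheme \'etale over $\mell$. By construction $\TMF = \Gamma(\mell, \otop) = \lim_\Delta A^\bullet$. Taking as the definition $\qcoh(\mell, \otop) = \lim_\Delta \md(A^\bullet)$ (an instance of \'etale descent for modules, which is particularly clean here since each $A^n$ is even periodic with $\pi_0 A^n$ regular noetherian), the theorem reduces to showing that the pullback functor
\[ \md(\TMF) \longrightarrow \lim_\Delta \md(A^\bullet) \]
is an equivalence of symmetric monoidal $\infty$-categories.

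I would deduce this from an abstract descent formalism: it suffices to exhibit $A$ as a \emph{descendable} commutative $\TMF$-algebra, in the sense that $A$ generates the unit of $\md(\TMF)$ as a thick tensor-ideal. Granting descendability, the augmented cosimplicial object $\TMF \to A^\bullet$ behaves like a faithfully flat hypercover at the level of module $\infty$-categories, and the standard $\infty$-categorical descent machinery then produces the desired equivalence. The fully faithful part is formal once base change along $\TMF \to A$ is conservative; essential surjectivity follows from the fact that one can glue objects from the $\md(A^n)$ along the required coherence data.

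The main obstacle is the verification of descendability of $A$ over $\TMF$. A naive faithful flatness argument is unavailable because $\pi_*\TMF$ is far from regular and the map $\pi_* \TMF \to \pi_* A$ is not flat in any algebraic sense. The strategy is instead to choose the cover $\spec A \to \mell$ to be quasi-compact, so that the cobar complex of $A$ over $\TMF$ has bounded length in an appropriate filtered sense; combining this quasi-compactness with the regular noetherian behavior of the elliptic spectra $A^n$ (which provides tight control over $\md(A^n)$ via the earlier affine classification) lets one bound the nilpotence degree of the augmentation $\TMF \to A$. This is exactly the descendability condition, and completes the proof.
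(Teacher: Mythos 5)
Your proposal follows the same conceptual skeleton as the proof in \cite{MM}: identify $\qcoh(\mathfrak{X})$ with a limit of module $\infty$-categories over a cosimplicial diagram of elliptic spectra, and then show that the comparison functor from $\md(\TMF)$ to that limit is an equivalence by exhibiting $\TMF \to A$ as descendable. The organizing idea is correct.

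The gap is in the verification of descendability, which is precisely where all the work is. You write that choosing a quasi-compact atlas ``bounds the length of the cobar complex'' and that regularity of the $\pi_0 A^n$ ``lets one bound the nilpotence degree.'' Neither implication is correct. Quasi-compactness is automatic (the moduli stack of elliptic curves is quasi-compact, so any affine atlas is), and it does not truncate the cobar construction, which is an honest cosimplicial object. Similarly, the regular noetherian nature of $\pi_0 A^n$ controls the thick subcategory lattice of $\md(A^n)$ as in \Cref{thickaffine}, but it has no bearing on whether the fiber of $\TMF \to A$ is nilpotent as a $\TMF$-module; a cover by nice rings can easily fail to be descendable. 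The hypothesis that actually does the work in \cite{MM} is not quasi-compactness but \emph{quasi-affineness of the map $M_{ell} \to M_{FG}$}, and the proof of descendability rests on genuine chromatic input: after localizing at a prime, all the relevant ring spectra are $L_n$-local, and the Hopkins--Ravenel smash product theorem (descendability of $E_n$ over $L_n S^0$) is what ultimately yields the required nilpotence bound. One then propagates descendability across the quasi-affine map $\mathfrak{X} \to M_{FG}^{\mathrm{der}}$. Without this chromatic ingredient, your sketch asserts but does not prove the key step. (The present paper does not reprove the theorem --- it is cited as a black box --- but \Cref{vanishingline} gives a close analogue of the needed vanishing-line statement and illustrates exactly where the $L_n$-local / Hopkins--Ravenel input enters.)
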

We refer to \cite[\S 2.3]{DAGQC} for
generalities on quasi-coherent sheaves on derived stacks. By \cite[Prop.
2.3.12]{DAGQC}, 
we can rewrite this statement by saying that 
\[ \md( \TMF) \simeq \varprojlim_{\spec R \to M_{ell}}  \md(\otop( \spec R)), \]
as $\spec R \to M_{ell}$ ranges over the \'etale maps from affine schemes.

More generally, the main result of \cite{MM} gives a criterion for this
phenomenon of 
``affineness,'' which had been first explored by Meier in \cite{meier}.

\newcommand{\affx}{\mathrm{Aff}_{/X}^{\mathrm{et}}}
\newcommand{\CAlg}{\mathrm{CAlg}}
We briefly review the setup. Let $X$ be a Deligne-Mumford stack equipped with a
flat map $X \to M_{FG}$. Given any affine scheme $\spec R$ and an \'etale map
$\spec R \to X$, the composite $\spec R \to X \to M_{FG}$ classifies a formal
group over $R$ which yields an associated (weakly) even periodic,  Landweber-exact homology theory and
even a homotopy commutative ring spectrum. 
In particular, we get a functor
\[ \affx \to \text{Homotopy commutative ring spectra}  \]
from the category of affine schemes \'etale over $X$ to the category of
homotopy commutative ring spectra and homotopy classes of maps between them. 
We refer to \cite[Lecture 18]{lurienotes} for the fundamentals of  even periodic 
Landweber-exact, ring spectra.

In certain important cases, one has a lifting
\[ \xymatrix{
  & \ar[d] \CAlg  \\
\affx \ar[ru]^{\otop} \ar[r] &  \text{Homotopy commutative ring spectra} 
} ,\]
where $\CAlg$ is the $\infty$-category of $E_\infty$-rings. 
Such a lift is called an \emph{even periodic refinement} $\mathfrak{X} = (X,
\otop)$ of $X \to M_{FG}$. Such even periodic refinements exist, for instance,
for $X$ the moduli stack of elliptic curves, and yield important examples of
derived stacks. 

Given an even periodic refinement $\mathfrak{X} = (X, \otop)$, we can consider
the $E_\infty$-ring $\Gamma(\mathfrak{X}, \otop)$ of ``functions'' on the
derived stack $\mathfrak{X}$ and its $\infty$-category of modules. We can also
consider a related $\infty$-category, the $\infty$-category $\qcoh(
\mathfrak{X})$ of \emph{quasi-coherent sheaves} on $\mathfrak{X}$, obtained (by
\cite[Prop.
2.3.12]{DAGQC})
as
the homotopy limit of the module categories $\md( \otop( \spec R))$ as $\spec R
\to X$ ranges over all \'etale maps from affines. 

The main result of \cite{MM} now runs as follows. 
\begin{theorem}[\cite{MM}] 
\label{affinethm}
Let $\mathfrak{X} = (X, \otop)$ be an even periodic refinement of a
Deligne-Mumford stack $X$ equipped with a flat map $X \to M_{FG}$. If $X \to
M_{FG}$ is quasi-affine, then the global sections functor establishes an
equivalence of symmetric monoidal $\infty$-categories
\[ \Gamma\colon \qcoh( \mathfrak{X}) \simeq \md( \Gamma( \mathfrak{X}, \otop)).  \]
\end{theorem}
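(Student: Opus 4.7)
The plan is to identify the global sections functor as the right adjoint of a symmetric monoidal adjunction and then to apply a Schwede--Shipley-type recognition principle for module categories. Writing $R = \Gamma(\mathfrak{X}, \otop)$, pulling back along the projection $\mathfrak{X} \to \ast$ gives a symmetric monoidal functor $L \colon \md(R) \to \qcoh(\mathfrak{X})$ sending $M \mapsto M \otimes_R \otop$, and one checks formally that $L$ is left adjoint to $\Gamma$. Since $L$ is symmetric monoidal and sends the unit to the unit, it suffices by Lurie's version of the Schwede--Shipley theorem to show that $\otop \in \qcoh(\mathfrak{X})$ is a compact generator; equivalence of symmetric monoidal structures will then follow since $L$ is symmetric monoidal and an equivalence. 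This reduces the theorem to two technical statements about $\qcoh(\mathfrak{X})$: (i) $\Gamma(\mathfrak{X}, -)$ commutes with arbitrary colimits (compactness of $\otop$), and (ii) an object $\F \in \qcoh(\mathfrak{X})$ with $\Gamma(\mathfrak{X}, \F \otimes \otop[n]) = 0$ for all $n$ is zero (generation).

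Both statements are essentially derived-algebraic-geometry analogues of the classical fact that for a quasi-affine scheme $Y$, the structure sheaf $\mathcal{O}_Y$ is a compact generator of $\qcoh(Y)$, with cohomology vanishing in a uniform range. My plan is to leverage the quasi-affineness assumption on $X \to M_{FG}$ by descent along a flat cover of $M_{FG}$ by an affine (derived) scheme. A convenient choice is a cover arising from complex bordism: since the given map $X \to M_{FG}$ is flat, Landweber exactness provides $\otop$ on affine $\spec A \to X$, and pulling back along a cover $\spec MU_* \to M_{FG}$ (realized in derived algebraic geometry by $MU$) gives a quasi-affine derived scheme $\mathfrak{Y} = \mathfrak{X} \times_{M_{FG}} \spec MU$ together with a cosimplicial resolution
\[
\qcoh(\mathfrak{X}) \simeq \varprojlim_{[n] \in \Delta} \qcoh\bigl( \mathfrak{Y}^{\times_{\mathfrak{X}} (n+1)} \bigr).
\]
On each cosimplicial level the underlying object is quasi-affine over an affine derived scheme, hence affine in the sense of the theorem, by the classical fact (lifted to the spectral setting) that on a quasi-affine scheme the structure sheaf has bounded cohomological dimension and generates. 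Combining the level-wise affineness with the descent spectral sequence yields both compactness and generation of $\otop$ on $\mathfrak{X}$.

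The hard part is handling the descent spectral sequence convergence issues: one needs the vanishing of $\Gamma(\mathfrak{X}, \F)$ to propagate through the $\varprojlim$, which requires uniform bounds on the cohomological amplitude along each level of the cover, together with finiteness of the combinatorial dimension of $X \to M_{FG}$ controlled by quasi-affineness. In particular, showing that $\Gamma(\mathfrak{X}, -)$ preserves filtered colimits demands that the Bousfield--Kan spectral sequence computing it from the \v{C}ech nerve converges strongly, which ultimately rests on a finite-dimensionality input coming from the quasi-affine hypothesis (as opposed to merely quasi-compact quasi-separated). Once this is in hand, compact generation follows by a straightforward argument: a nonzero $\F$ has nonzero restriction to some affine étale chart of $\mathfrak{X}$, whence nonzero sections after tensoring with an appropriate descent of $\otop$ twisted by shifts.
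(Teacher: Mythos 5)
The high-level architecture of your argument is sound and does match the strategy of [MM]: exhibit $\Gamma$ as the lax symmetric monoidal right adjoint of $L\colon \md(R) \to \qcoh(\mathfrak{X})$, apply the Lurie/Schwede--Shipley recognition principle, and reduce to showing $\otop$ is a compact generator of $\qcoh(\mathfrak{X})$; one then attacks this by writing $\qcoh(\mathfrak{X})$ as a totalization over a cover and controlling the descent tower.

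However, there is a genuine gap in where you locate the source of finiteness, and it is exactly at the step you flag as ``the hard part.'' You claim that strong convergence of the Bousfield--Kan spectral sequence ``ultimately rests on a finite-dimensionality input coming from the quasi-affine hypothesis (as opposed to merely quasi-compact quasi-separated).'' This is not correct as stated. Quasi-affineness of $X \to M_{FG}$ does \emph{not} give bounded cohomological dimension of $X$: the moduli stack $M_{ell}$ is quasi-affine over $M_{FG}$, yet $H^*(M_{ell}, -)$ is unbounded at the primes $2$ and $3$ because of the stacky points with extra automorphisms, and correspondingly the $E_2$-page of the $\TMF$ descent spectral sequence is nonzero in arbitrarily high filtration. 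Similarly, the $MU$-cobar complex you propose descending along does not in general truncate --- the Adams--Novikov spectral sequence already has terms of unbounded filtration --- so levelwise affineness of $\mathfrak{X} \times_{M_{FG}} \spec MU^{\otimes(\bullet+1)}$ does not by itself propagate to the limit.

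The actual input that makes the descent tower lie in $\towg$ (equivalently, gives a uniform horizontal vanishing line at a finite page) is chromatic, not purely geometric: one observes that because $X$ is quasi-compact and flat over $M_{FG}$ the heights appearing are bounded, so the sheaf $\otop$ takes values in $E_n$-local spectra after localizing at any prime; the Hopkins--Ravenel smash product theorem then furnishes the horizontal vanishing line $p$-locally, while rationally one uses that $X_{\mathbb{Q}} \to B\mathbb{G}_m$ is affine and hence $X_{\mathbb{Q}}$ has no higher coherent cohomology, and these are glued by arithmetic fracture. This is exactly the machinery the present paper re-derives and exploits in the proof of \Cref{vanishingline}. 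Without invoking the smash product theorem your convergence argument does not close, and the appeal to quasi-affineness alone is a misdiagnosis; quasi-affineness instead enters crucially in ruling out ``extra'' thick tensor ideals (e.g.\ ensuring that $\otop$ actually generates, which fails already for $X = BG \times M_{FG}$ with $G$ finite).
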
 

The hypotheses of \Cref{affinethm} apply in several important cases, such as
the derived version of the moduli stack of elliptic curves as well as its
Deligne-Mumford compactification.

\subsection{Results}
In this paper, we will use \Cref{affinethm} to illuminate the structure of the
$\infty$-category of \emph{perfect} $\Gamma( \mathfrak{X}, \otop)$-modules. 
In particular, we will classify the thick subcategories of perfect $\Gamma(
\mathfrak{X}, \otop)$-modules, under some further constraints.

In classifying thick subcategories, it will generally be more convenient to work with $\qcoh( \mathfrak{X})$ (at
least implicitly), and in particular with the \emph{homotopy group sheaves}
$\pi_0, \pi_1$ of
a given object in $\qcoh( \mathfrak{X})$, which in the perfect case will define
a $\mathbb{Z}/2$-graded coherent sheaf on $X$. Given a subset of the
underlying space of $X$ closed under specialization, it
follows that we can define a thick subcategory of $\qcoh( \mathfrak{X})$
consisting of objects whose homotopy group sheaves are supported on that subset. 

The main result of this paper is: 

\begin{theorem} \label{ourthick}
Suppose $X$ is regular and affine flat over $M_{FG}$. 
Then the  thick subcategories of perfect modules over $\md( \Gamma(
\mathfrak{X}, \otop)) \simeq \qcoh(\mathfrak{X})$ are in bijection (as indicated above) with the
subsets of $X$ closed under specialization. 
\end{theorem}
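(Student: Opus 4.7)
For any specialization-closed subset $Z \subseteq |X|$, the full subcategory $\mathcal{T}_Z := \{M \in \md^{\omega}(\Gamma(\mathfrak{X},\otop)) : \mathrm{supp}(\pi_* M) \subseteq Z\}$ is readily checked to be thick. In the other direction I associate to a thick $\mathcal{T}$ the union $Z(\mathcal{T}) := \bigcup_{M \in \mathcal{T}} \mathrm{supp}(\pi_* M)$, which is automatically specialization-closed. The identity $Z(\mathcal{T}_Z) = Z$ will follow by exhibiting, for each $x \in X$, a perfect module with support equal to $\overline{\{x\}}$; such a module is obtained by taking a Koszul complex on a regular sequence cutting out $\overline{\{x\}}$ inside an étale affine neighborhood of $x$ (available because $X$ is regular) and pushing forward to $\Gamma(\mathfrak{X},\otop)$ via \Cref{affinethm}.

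The substantive direction is $\mathcal{T}_{Z(\mathcal{T})} \subseteq \mathcal{T}$: given perfect $M$ with $\mathrm{supp}(\pi_* M) \subseteq Z(\mathcal{T})$, I must show $M \in \mathcal{T}$. Since the support is closed in $|X|$ and therefore quasi-compact, finitely many $N_1,\dots,N_k \in \mathcal{T}$ suffice to cover it; replacing them by $N := \bigoplus_i N_i \in \mathcal{T}$ reduces the problem to the statement that $M \in \mathrm{thick}(N)$ whenever $\mathrm{supp}(\pi_* M) \subseteq \mathrm{supp}(\pi_* N)$. I then choose an étale surjection $f \colon \spec R \to X$ from an affine scheme and set $A := \otop(\spec R)$, which is an even periodic $E_\infty$-ring with $\pi_0 A = R$ regular noetherian by the hypotheses on $X$ together with the étaleness of $f$. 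The pullbacks $f^*M, f^*N$ are perfect $A$-modules, and flatness of $f$ gives $\mathrm{supp}(\pi_* f^*M) \subseteq \mathrm{supp}(\pi_* f^*N)$ inside $\spec R$, so the affine version of the theorem (\Cref{thickaffine}) yields $f^*M \in \mathrm{thick}(f^*N)$ in $\md^{\omega}(A)$.

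The final, and hardest, step is to descend this local containment to $M \in \mathrm{thick}(N)$ globally. The key observation is that in our setting every thick subcategory is automatically a thick tensor ideal, because the unit object generates $\md^{\omega}(\Gamma(\mathfrak{X},\otop))$ under finite colimits and retracts; consequently $\mathrm{thick}(N)$ coincides with the thick tensor ideal $\langle N \rangle$, and membership in $\langle N \rangle$ is a support-theoretic condition which can be detected étale-locally on $X$. To make this rigorous I plan to identify $\langle N \rangle$ with $\mathcal{T}_{\mathrm{supp}(\pi_* N)}$ directly via Čech descent along $f$, using \Cref{affinethm} to rewrite $\md(\Gamma(\mathfrak{X},\otop))$ as $\qcoh(\mathfrak{X})$ so that descent is effective; an equivalent route is to compute the Balmer spectrum of $\md^{\omega}(\Gamma(\mathfrak{X},\otop))$ by étale descent from $\spec R$, where it is $\spec R$ by the affine case, and then invoke Balmer's general classification. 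The principal obstacle I expect is formalizing precisely this descent of thick tensor ideals along $f$---ensuring the local-to-global passage works uniformly over the Čech nerve of $f$, each stage of which is itself an even periodic $E_\infty$-ring with regular noetherian $\pi_0$ (by the affineness and regularity hypotheses on $X \to M_{FG}$)---and this is where the technical heart of the proof will sit.
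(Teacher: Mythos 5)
Your plan has two genuine gaps, and they sit at the two places the paper expends the most effort.

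First, the step ``take a Koszul complex on a regular sequence cutting out $\overline{\{x\}}$ inside an \'etale affine neighborhood and push forward to $\Gamma(\mathfrak{X},\otop)$'' does not work: pushforward along a non-finite (e.g.\ a generic \'etale surjection $\spec R \to X$) morphism does not preserve perfectness/dualizability. If $X$ were a quotient by a \emph{finite} group this would be salvageable (that is exactly the special case handled in \Cref{galthickc}), but for a general $X$ affine flat over $M_{FG}$ there is no finite \'etale atlas to push forward along. Constructing a perfect $\Gamma(\mathfrak{X},\otop)$-module with prescribed support is in fact the hard half of the theorem; the paper does it with the abstract periodicity theorem \Cref{ourperiod} (built on the horizontal vanishing line \Cref{vanishingline} in the descent spectral sequence), the rational reduction \Cref{rationalexistence}, the Hopf algebroid / $\mathbb{G}_a$-invariant lemmas of \Cref{sec:algebraicsetup}, and the Frobenius-type lemma \Cref{froblift}. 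None of this is ``readily checked''; it is the main content.

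Second, your reduction of the containment $\mathcal{T}_{Z(\mathcal{T})} \subseteq \mathcal{T}$ to ``\'etale descent of thick tensor ideals / Balmer spectra along $f$'' is offered as the technical heart but is not supplied, and it is not obviously true along an arbitrary quasi-compact \'etale surjection (conservativity of $f^*$ does not by itself give $f^*M \in \mathrm{thick}(f^*N) \Rightarrow M \in \mathrm{thick}(N)$). The paper avoids this entirely: it defines the residue field homology theories $K(\mathfrak{p})_*$ directly on $\qcoh(\mathfrak{X})$ by \emph{pullback} along $\spec R \to X$ (this direction is fine), proves the Bousfield decomposition there, and then runs the standard Hopkins--Smith nilpotence formalism internally to $\qcoh^{\omega}(\mathfrak{X})$ to obtain \Cref{possiblethick}. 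You should either import that argument or actually justify the tensor-triangular descent you invoke; as written, the step where you ``plan to identify $\langle N \rangle$ with $\mathcal{T}_{\mathrm{supp}(\pi_* N)}$ via \v{C}ech descent'' is a restatement of what must be proved, not a proof.
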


\Cref{ourthick} in particular applies to the derived moduli stack of elliptic
curves, and one has:

\begin{corollary} 
There is a bijection 
between thick subcategories of perfect $\TMF$-modules and subsets of $M_{ell}$
which are closed under specialization.
\end{corollary}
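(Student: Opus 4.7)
The corollary is a direct specialization of \Cref{ourthick} to the derived moduli stack of elliptic curves, so my plan is simply to verify the hypotheses.

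First, I would recall from Goerss-Hopkins-Miller that $M_{ell}$ carries an even periodic refinement $\mathfrak{M}_{ell} = (M_{ell}, \otop)$ whose global sections $\Gamma(\mathfrak{M}_{ell}, \otop)$ are by definition $\TMF$. By \Cref{affinethm} (the main result of \cite{MM}), and as already noted in the text, the hypotheses of affineness apply to $\mathfrak{M}_{ell}$, so we have the symmetric monoidal equivalence $\md(\TMF) \simeq \qcoh(\mathfrak{M}_{ell})$. This is what translates the classification of thick subcategories of $\md^\omega(\TMF)$ into a geometric statement on $M_{ell}$.

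Next, I would check that $M_{ell}$ satisfies the two hypotheses of \Cref{ourthick}. Regularity of $M_{ell}$ follows since it is a smooth Deligne-Mumford stack over $\spec \mathbb{Z}$; in particular, every local ring of an \'etale cover is a regular local ring. For the flat and affine map $M_{ell} \to M_{FG}$ sending an elliptic curve to its associated formal group, flatness is the classical Landweber exactness of elliptic homology theories, and affineness is a standard fact about the morphism (indeed, this is precisely what made the hypotheses of \Cref{affinethm} applicable in the first place, as noted after that theorem in the text).

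With these hypotheses verified, I would simply invoke \Cref{ourthick} to conclude that thick subcategories of perfect $\TMF$-modules are in bijection with specialization-closed subsets of $M_{ell}$. There is no real obstacle here beyond citing the correct references for regularity and affineness of the map $M_{ell} \to M_{FG}$; the entire content of the corollary is packaged into the main theorem, and the proof amounts to observing that $\mathfrak{M}_{ell}$ is the motivating example of the framework developed in the paper.
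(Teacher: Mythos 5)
Your proof is correct and follows the same route as the paper, which simply states the corollary as an immediate application of \Cref{ourthick} to the even periodic refinement of $M_{ell}$. The verification of the hypotheses (regularity of $M_{ell}$ as a smooth Deligne-Mumford stack over $\mathbb{Z}$, flatness and affineness of $M_{ell} \to M_{FG}$, and the equivalence $\md(\TMF) \simeq \qcoh(\mathfrak{M}_{ell})$ from \cite{MM}) is exactly the content the paper leaves implicit.
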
 

We can also apply this to the classification of thick subcategories of perfect modules over the
Hopkins-Miller $EO_n$-spectra (in view of \cite[\S 6.2]{MM}). 
\begin{corollary} 
Let $G$ be a finite subgroup of the $n$th Morava stabilizer group, so that $G$
acts on Morava $E$-theory $E_n$. Then thick subcategories of perfect
$E_n^{hG}$-modules are in bijection with $G$-invariant subsets of $\spec \pi_0
E_n$ closed under specialization. 
\end{corollary}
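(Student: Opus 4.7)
The plan is to obtain the corollary as a direct application of \Cref{ourthick}. Following \cite[\S 6.2]{MM}, we first realize $E_n^{hG}$ as the $E_\infty$-ring of global sections $\Gamma(\mathfrak{X}, \otop)$ of an even periodic refinement $\mathfrak{X} = (X, \otop)$, where the underlying Deligne-Mumford stack $X = [\spec \pi_0 E_n / G]$ is the quotient of Lubin-Tate space by the Goerss-Hopkins-Miller action of $G$. The flat classifying map $X \to M_{FG}$ records the universal deformation of a height-$n$ formal group, modulo the $G$-action.

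Next, we would verify the hypotheses of \Cref{ourthick} on $X$. Regularity is automatic: $\spec \pi_0 E_n \cong \spec W(\mathbb{F}_{p^n})[[u_1, \dots, u_{n-1}]]$ is a regular local ring and forms an \'etale cover of $X$, so $X$ is regular as a stack. Affineness and flatness of the map $X \to M_{FG}$ are established in \cite[\S 6.2]{MM}, using the fact that $\spec \pi_0 E_n$ pro-represents deformations of a height-$n$ formal group and that the map $X \to M_{FG}$ therefore factors through the formal neighborhood of the height-$n$ locus in a controlled manner.

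With the hypotheses checked, \Cref{ourthick} yields a bijection between thick subcategories of the $\infty$-category of perfect $E_n^{hG}$-modules and specialization-closed subsets of the underlying topological space $|X|$. Since $X$ is the quotient of $\spec \pi_0 E_n$ by the finite group $G$, the space $|X|$ is the set-theoretic quotient $(\spec \pi_0 E_n)/G$, and specialization-closed subsets of $|X|$ correspond bijectively, via preimage under the quotient map, to $G$-invariant specialization-closed subsets of $\spec \pi_0 E_n$, giving the stated bijection. I expect the most delicate step to be the affineness of $X \to M_{FG}$: because $G$ fixes the closed point of $\spec \pi_0 E_n$, the \'etale cover $\spec \pi_0 E_n \to X$ is not a $G$-torsor over $M_{FG}$ and one cannot simply invoke flat descent; the verification relies on the particular deformation-theoretic geometry of the Lubin-Tate situation as carried out in \cite[\S 6.2]{MM}.
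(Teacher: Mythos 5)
Your proposal is essentially correct and takes the route the paper advertises in its introduction: realize $E_n^{hG}$ as $\Gamma(\mathfrak{X},\otop)$ for $\mathfrak{X}$ the even periodic refinement of $X = [\spec \pi_0 E_n/G] \to M_{FG}$, check the hypotheses of \Cref{ourthick}, and then pass from specialization-closed subsets of $|X|$ to $G$-invariant specialization-closed subsets of $\spec \pi_0 E_n$. Your regularity check and the final quotient-space translation are both fine. You are also right to flag the affineness of $X \to M_{FG}$ as the real issue; the reason it holds is worth spelling out: for any $\spec A \to M_{FG}$, the $G$-action on $\spec \pi_0 E_n \times_{M_{FG}} \spec A$ is \emph{free} because $G$ embeds faithfully into $\mathrm{Aut}(\mathbb{G}_0) = \mathbb{S}_n$, so a nontrivial $g$ cannot fix an isomorphism datum. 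The quotient of an affine scheme by a free action of a finite group is affine, so the stacky locus of $X$ (at the closed point) is exactly absorbed by the stackiness of $M_{FG}$, and the morphism is affine even though $\spec \pi_0 E_n \to X$ is not a $G$-torsor over $M_{FG}$.

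That said, the paper also gives a second, arguably cleaner route that sidesteps the geometric verification entirely: the Corollary to \Cref{galthickc} about faithful $G$-Galois extensions. For $R = E_n$ with its $G$-action, conditions (1) ($E_n$ even periodic with $\pi_0 E_n$ regular local) and (2) ($E_n^{hG} \to E_n$ a faithful $G$-Galois extension, by Devinatz--Hopkins--Rognes) are immediate, and no map to $M_{FG}$ is invoked at all; the required vanishing line in the homotopy fixed point spectral sequence is furnished by descendability of the Galois extension rather than by \Cref{vanishingline}. That route replaces the ``deformation-theoretic geometry'' input you mention with the purely homotopy-theoretic Galois descent equivalence $\md(E_n^{hG}) \simeq \md(E_n)^{hG}$, which is closer to what \cite[\S 6.2]{MM} actually supplies. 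Both approaches work; yours is consistent with the paper's introductory phrasing, while the Galois-theoretic one is more self-contained given the machinery the paper has already set up.
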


The proof of \Cref{ourthick} follows the outline of \Cref{HSthick}, although
since we already have the nilpotence technology of \cite{DHS} and \cite{HS}
and can in particular use results such as \Cref{HSthick},
many steps are much simpler. The proof that all possible thick subcategories
come from subsets of $X$ closed under specialization is essentially formal once certain ``residue
fields'' are constructed, using the techniques of \cite{BakerRichter} and
\cite{BR2}. (The analog in \Cref{HSthick} is given by the Morava $K$-theories.) 
The harder step is to show that all the different subsets closed under
specialization are realized,
which requires in addition some algebraic preliminaries about the structure
of $M_{FG}$ and topological preliminaries of vanishing lines in spectral
sequences, which are of interest in themselves. 

\subsection*{Acknowledgments} I would like to thank Benjamin Antieau, Mike Hopkins, Jacob
Lurie, Lennart
Meier, and Niko Naumann for helpful discussions related to the subject of this
paper, and the referee for many detailed comments. 
The author was partially supported by the NSF Graduate Research Fellowship
under grant DGE-110640.
\newcommand{\mo}{\md^{\omega}}
\newcommand{\supp}{\mathrm{Supp}}
\section{The affine case}
\label{sec:affinecase}

\subsection{Setup}
Let $R$  be an  even periodic $E_\infty$-ring such that $\pi_0
R$ is \emph{regular noetherian}. We do not need to assume that $R$ is Landweber-exact.
Consider the
stable $\infty$-category $\mo(R)$ of perfect $R$-modules. The goal of this
section is to classify the thick subcategories of $\mo(R)$ in
\Cref{thickaffine} below. 

\begin{proposition} 
Let $R$ be as above, and 
let $M$ be a perfect $R$-module. Then:
\begin{enumerate}
\item  $\pi_0 M \oplus \pi_1 M$ is a
finitely generated $\pi_0 R$-module.
\item Assume that $\pi_0R$ has finite global (i.e., Krull) dimension. 
In this case, an $R$-module $M$ is in fact perfect \emph{if and only if} $\pi_0 M \oplus
\pi_1 M$ is a finitely generated $R$-module.
\end{enumerate}
\end{proposition}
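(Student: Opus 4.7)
For the first part, the plan is a standard closure argument. Let $\mathcal{C} \subset \md(R)$ denote the full subcategory of those $M$ for which $\pi_0 M \oplus \pi_1 M$ is a finitely generated $\pi_0 R$-module; by even periodicity this is equivalent to $\pi_* M$ being finitely generated over the noetherian ring $\pi_* R = \pi_0 R[u^{\pm 1}]$. The long exact sequence in homotopy together with the noetherian hypothesis shows that $\mathcal{C}$ is closed under cofibers, and $\mathcal{C}$ is evidently closed under retracts, so $\mathcal{C}$ is a thick subcategory of $\md(R)$ containing $R$; therefore $\mo(R) \subseteq \mathcal{C}$.

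For the second part, assume now that $\pi_0 R$ has finite global dimension $d$, and let $M$ be an $R$-module with $\pi_0 M \oplus \pi_1 M$ finitely generated. The plan is to construct a cellular approximation to $M$ by perfect modules. First choose a finite sum $P_0$ of copies of $R$ and $\Sigma R$ together with a map $f_0\colon P_0 \to M$ inducing a surjection on $\pi_*$, and form the fiber $M_1 := \mathrm{fib}(f_0)$; since $\pi_* P_0 \to \pi_* M$ is surjective, $\pi_* M_1 = \ker(\pi_* P_0 \to \pi_* M)$, still finitely generated by noetherianness. Iterate: at stage $k$, pick a finite free $R$-module $P_k$ (a sum of copies of $R$ and $\Sigma R$) mapping surjectively on $\pi_*$ to $M_k$, and set $M_{k+1} := \mathrm{fib}(P_k \to M_k)$. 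The groups $\pi_0 M_k$ and $\pi_1 M_k$ are iterated syzygies of $\pi_0 M$ and $\pi_1 M$ over $\pi_0 R$, so the global dimension hypothesis forces $\pi_0 M_d$ and $\pi_1 M_d$ to be finitely generated projective.

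The core step, and the main obstacle I anticipate, is the resulting base case: an $R$-module $N$ with $\pi_0 N$ and $\pi_1 N$ finitely generated projective is itself perfect. To handle this, choose a finite free $R$-module $P$ together with $f\colon P \to N$ inducing a \emph{split} surjection on $\pi_*$ (possible since projective $\pi_0 R$-modules are summands of free ones), let $s\colon \pi_* N \to \pi_* P$ be a splitting, and set $e := s \circ \pi_*(f)$, an idempotent on $\pi_* P$. Even periodicity and freeness of $P$ force the natural map $[P, P]_{\md(R)} \to \mathrm{End}_{\pi_* R}(\pi_* P)$ to be a bijection (the relevant groups $[\Sigma^a R, \Sigma^b R]$ are either $\pi_0 R$ or $0$ by the vanishing of odd homotopy of $R$), so $e$ lifts to an idempotent $\tilde{e}\colon P \to P$ in the homotopy category; since $\md(R)$ is idempotent complete, $\tilde{e}$ splits with image $Q$, and the composite $Q \hookrightarrow P \xrightarrow{f} N$ is an equivalence because it is an isomorphism on $\pi_*$. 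Thus $N$ is a retract of the perfect module $P$ and is itself perfect.

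To conclude, feed this back through the cellular construction: each $M_{k+1} \to P_k \to M_k$ is simultaneously a fiber and a cofiber sequence in the stable setting, so a reverse induction on $k$ (starting with the perfectness of $M_d$, and using that each $P_k$ is manifestly perfect) shows that $M = M_0$ is perfect.
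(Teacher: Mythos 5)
Your proof is correct and follows essentially the same route as the paper: part 1 is the identical thick-subcategory argument, and part 2 proves perfectness by iteratively peeling off free covers until the homotopy groups are projective (your explicit tower $M_0, \dots, M_d$ is the unwound form of the paper's descending induction on projective dimension). The one thing you add is a full argument for the projective base case — lifting the idempotent $s \circ \pi_*(f)$ through the isomorphism $[P,P] \cong \mathrm{End}_{\pi_* R}(\pi_* P)$ and splitting it — where the paper simply asserts this is well-known; that filled-in step is correct and genuinely clarifying.
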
 
\begin{proof}
The first assertion is equivalent to the assertion that $\pi_*(M)$ is a
finitely generated $\pi_*(R)$-module. Consider the collection of $M \in \mo(R)$ for which
this holds. It clearly contains $R$. Since $\pi_*(R)$ is noetherian, it follows
from the long exact sequence of a cofibering
that this collection is a stable subcategory of $\mo(R)$, and it is also closed
under retracts. Therefore, it is all of $\mo(R)$.

Suppose conversely that $\pi_0(R)$ has finite global dimension and $\pi_*(M)$
is a finitely generated 
$\pi_*(R)$-module. We claim that $M$ is perfect. For this, we use descending
induction on the (finite) projective dimension of $\pi_0(M) \oplus \pi_1(M)$ over $\pi_0(R)$.
If $\pi_0(M), \pi_1(M)$ are projective $R$-modules, then it is well-known that
$M$ is perfect: in fact, $M$ is a retract of a sum of copies of $R$ and $\Sigma
R$. Suppose the projective dimension of $\pi_0(M) \oplus \pi_1(M)$ is equal to
$n > 0$. Choose a map $R^s \vee \Sigma R^t \to M$ which induces an epimorphism
on $\pi_*$, which we may do as $\pi_*(M)$ is a finitely generated
$\pi_*(R)$-module, and let $F$ be the fiber of this map. Clearly, $M \in
\mo(R)$ if and only if $F \in \mo(R)$. But $\pi_*(F)$ is finitely generated,
and $\pi_0(F) \oplus \pi_1(F)$ has projective dimension $\leq n-1$. By
induction, we are done.
\end{proof}

\begin{definition} 
If $M \in \mo(R)$, the \emph{support} of $\pi_0 M \oplus \pi_1 M$ thus defines a
closed subset of $\spec \pi_0 R$, which we will simply write as $\supp M$. 
\end{definition} 
Given a cofiber sequence
\[ M' \to M \to M'' \to M'[1],  \]
we have 
\[  \supp M \subset \supp M' \cup \supp M'', \]
and, furthermore, the support only shrinks under taking retracts. 
This enables us to define thick subcategories of $\mo(R)$:

\begin{definition} 
Given a closed subset $Z \subset \spec \pi_0 R$, we let $\mo_Z(R) \subset
\mo(R)$ be the full subcategory of perfect $R$-modules $M$ such that $\supp M
\subset Z$. 
It follows from the preceding paragraph that $\mo_Z(R)$ is a thick subcategory
of $\md(R)$. 
\end{definition}

In general, we cannot expect every thick subcategory of $\mo(R)$ to come from a
closed subset $Z \subset R$. We might choose instead to work with
a \emph{family} of possible choices of $Z$. If $R$ is a domain, we could, for example,  consider
the collection of all $M \in \mo(R)$ which are \emph{torsion}: i.e., those
whose support does not contain the generic point. This is a thick subcategory,  
but it is not associated to any single closed subset, but rather to the
collection of all closed subsets of $\spec R$ that do not contain the generic
point. 
Rather than working with ``families of closed subsets,'' it is simpler to work
with subsets of $\spec R$ closed under specialization. 

\begin{definition} 
Let $Z \subset \spec \pi_0 R$ be a subset of $\spec \pi_0 R$ closed under specialization.
In this case, we define $\mo_Z(R) \subset \mo(R)$ analogously to be the
collection of $M \in \mo(R)$ with $\supp M \subset Z$. 
We thus get a \emph{map} from the collection of specialization-closed subsets
of $\spec \pi_0 R$ to the collection of thick subcategories of $\mo(R)$.
\end{definition}

The goal of this section is to prove 
that the $\mo_Z(R)$ are \emph{precisely} the thick subcategories of $\mo(R)$,
as $Z$ ranges over the specialization-closed subsets of $\spec \pi_0 R$.
The proof will follow the argument for finite spectra in \cite{HS}. The key
step, as in \cite{HS}, is a version of the nilpotence theorem, which is much
easier in the present setting. 

\subsection{Residue fields}
The first step is to define ``residue fields.''  Consider the regular ring $\pi_0 R$. For each
prime ideal $\mathfrak{p} \subset \pi_0 R$, the localization
$(\pi_0 R)_{\mathfrak{p}}$ is a regular local ring, whose maximal ideal $\mathfrak{p}
(\pi_0 R)_{\mathfrak{p}}$ is generated by a system of parameters $x_1, \dots, x_n \in
(\pi_0 R)_{\mathfrak{p}}$ such that $(\pi_0 R)_{\mathfrak{p}}/(x_1, \dots, x_n)$ is a field
$k(\mathfrak{p})$.  

\begin{definition}
\label{k(p)}
Given $\mathfrak{p}$, consider 
first $R_{\mathfrak{p}}$, the arithmetic
localization of $R$ at $\mathfrak{p}$, as an $E_{\infty}$-$R$-algebra using 
\cite[Theorem 2.2]{EKMM} or \cite[\S 8.2.4]{higheralg}.
Then consider the $R$-module $K(
\mathfrak{p})$ defined as
\[ K( \mathfrak{p}) = R_{\mathfrak{p}}/(x_1, \dots, x_n)
\stackrel{\mathrm{def}}{=} R_{\mathfrak{p}}/x_1 \wedge_R
R_{\mathfrak{p}}/x_2 \wedge_R \dots \wedge_R R_{\mathfrak{p}}/x_n
,  \]
where $R/x$ for $x \in \pi_0 R$ denotes the cofiber of $x\colon R \to R$,
so that 
\[ K( \mathfrak{p})_* \simeq k( \mathfrak{p})[t^{\pm 1}], \quad |t| = 2.  \]
By the results of \cite{angeltveit}, it follows that $K(\mathfrak{p})$ admits the structure of
an $A_\infty$-algebra internal to $\md(R)$. \end{definition}

\begin{definition}
For any $R$-module $M$, we can form the new
$R$-module
$ K( \mathfrak{p}) \wedge_R M$ 
and in particular we obtain (by applying $\pi_*$) a \emph{homology theory} $K( \mathfrak{p})_*$ on
the category of $R$-modules, taking value in the category of graded $k(
\mathfrak{p})[t^{\pm 1}]$-modules. 
\end{definition}
This homology theory 
$K(\mathfrak{p})$
is multiplicative and satisfies a K\"unneth isomorphism. Strictly speaking, the notation is
abusive, since the multiplicative structure $K(\mathfrak{p})_*$ was not constructed only from $\mathfrak{p}$, but
also used some other choices; any of those choices will be fine for our
purposes. 

The thick subcategories $\md_Z^{\omega}(R)$ can also be defined in terms of the
homology theories $K( \mathfrak{p})_*$. 

\begin{proposition} 
Given a \emph{perfect} $R$-module $M$, we have that $M \in \md_Z^\omega(R)$ if
and only if for every $\mathfrak{p} \notin Z$, 
$K( \mathfrak{p})_* M = 0$.
\end{proposition}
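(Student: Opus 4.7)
The plan is to unpack $K(\mathfrak{p})_*M$ as the homotopy of an iterated cofiber and to compare it, stage by stage, with the localization $\pi_*(M_{\mathfrak{p}})$, using Nakayama's lemma over a regular local ring. Since $R$ is even periodic, $\pi_*M$ is $\mathbb{Z}/2$-graded and is determined by $\pi_0M\oplus\pi_1M$, so $\mathfrak{p}\in\supp M$ is equivalent to $\pi_*(M_{\mathfrak{p}})=(\pi_*M)_{\mathfrak{p}}\neq 0$. Thus the proposition reduces to the following claim, for each $\mathfrak{p}\in\spec\pi_0R$: $K(\mathfrak{p})_*M=0$ if and only if $\pi_*(M_{\mathfrak{p}})=0$.

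The easy direction is that if $\mathfrak{p}\notin\supp M$, then $(\pi_*M)_{\mathfrak{p}}=0$, hence $M_{\mathfrak{p}}\simeq 0$ in $\md(R_{\mathfrak{p}})$, so $K(\mathfrak{p})\wedge_R M\simeq K(\mathfrak{p})\wedge_{R_{\mathfrak{p}}}M_{\mathfrak{p}}$ vanishes and $K(\mathfrak{p})_*M=0$.

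For the converse, suppose $\pi_*(M_{\mathfrak{p}})\neq 0$. Write $M_i = M_{\mathfrak{p}}/(x_1,\dots,x_i)$, so $M_0 = M_{\mathfrak{p}}$ and $M_n\simeq K(\mathfrak{p})\wedge_R M$ by the definition of $K(\mathfrak{p})$ in \Cref{k(p)}. I induct on $i$, with the inductive hypothesis that $\pi_*M_i$ is a nonzero finitely generated module over the graded local ring $A_i := (\pi_0R)_{\mathfrak{p}}/(x_1,\dots,x_i)[t^{\pm 1}]$. Finite generation is inherited through the long exact sequence associated to the cofiber sequence $M_{i-1}\xrightarrow{x_i} M_{i-1}\to M_i$, since $\pi_*M_0 = (\pi_*M)_{\mathfrak{p}}$ is finitely generated over $A_0$ by the first proposition of this section. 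Nonvanishing is the key point: the long exact sequence produces a short exact sequence
\[ 0\to \pi_*M_{i-1}/x_i \to \pi_*M_i \to \bigl(\pi_{*-1}M_{i-1}\bigr)[x_i] \to 0, \]
so it suffices to check that $\pi_*M_{i-1}/x_i\neq 0$; but $\pi_*M_{i-1}$ is nonzero and finitely generated over the local ring $A_{i-1}$, and $x_i$ lies in its maximal ideal, so this follows from Nakayama's lemma applied to the graded local ring $A_{i-1}$. Iterating $n$ times yields $\pi_*M_n = K(\mathfrak{p})_*M\neq 0$, as desired.

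The only nontrivial ingredient is the Nakayama step, and this is unobstructed because $(x_1,\dots,x_n)$ was chosen to be a system of parameters generating the maximal ideal of the regular local ring $(\pi_0R)_{\mathfrak{p}}$, so each $x_i$ lies in the maximal ideal of the intermediate local ring $A_{i-1}$.
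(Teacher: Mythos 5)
Your proof is correct, and it takes a genuinely different route from the paper's. The paper's argument, after the same reduction to showing $M_{\mathfrak{p}}\simeq 0\iff K(\mathfrak{p})_*M=0$, proceeds by observing that when $M$ is perfect the Bousfield localization of $M$ at $K(\mathfrak{p})$ is $M\wedge_R\widehat{R_{\mathfrak{p}}}$ (with $\widehat{R_{\mathfrak{p}}}=\varprojlim_N R_{\mathfrak{p}}/(x_1^N,\dots,x_n^N)$), that this vanishes when $K(\mathfrak{p})_*M=0$, and then invokes faithful flatness of $\widehat{R_{\mathfrak{p}}}$ over $R_{\mathfrak{p}}$ to conclude $M_{\mathfrak{p}}\simeq 0$. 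You instead peel off one parameter at a time, keeping track of finite generation through the long exact sequence and applying graded Nakayama to the graded local ring $A_{i-1}=(\pi_0 R)_{\mathfrak{p}}/(x_1,\dots,x_{i-1})[t^{\pm 1}]$ to see that nonvanishing propagates from $\pi_*M_{i-1}$ to $\pi_*M_i$. Your route is more elementary in that it avoids completions and the notion of Bousfield localization entirely, at the cost of being a slightly longer induction; conceptually the Nakayama step is doing the same ``faithful detection over a local base'' work that faithful flatness of completion does for the paper, just one variable at a time instead of all at once. One minor point worth making explicit when you write this up: $\pi_*M_i$ is indeed a graded $A_i$-module because $M_i$ is a module over the $A_\infty$-ring $R_{\mathfrak{p}}/(x_1,\dots,x_i)$, whose homotopy is $A_i$ since $(x_1,\dots,x_i)$ is a regular sequence in the regular local ring $(\pi_0R)_{\mathfrak{p}}$; this is what lets you apply graded Nakayama to $\pi_*M_{i-1}$ with $x_i$ in the (homogeneous) Jacobson radical.
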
 
\begin{proof}
It suffices to show that $M_{\mathfrak{p}}  = 0$ if and only if
$K(\mathfrak{p})_* M = 0$, for any $M \in \mo(R)$ and $\mathfrak{p} \in \spec
\pi_0 R$.
This is a consequence of the fact that if
$M_{\mathfrak{p}}/(x_1, \dots, x_n)$ is contractible and $M$ is perfect,  so is 
the Bousfield localization of $M$ at $K(\mathfrak{p})$, i.e., the smash product 
$M \wedge_R \widehat{R_{\mathfrak{p}}}$ where 
the completion  $\widehat{R_{\mathfrak{p}}}$ is given by
\[ \widehat{R_{\mathfrak{p}}} \simeq \varprojlim_N R_{\mathfrak{p}}/(x_1^N,
x_2^N, \dots, x_n^N) . \]
We refer to \cite[\S 4]{DAGXII} for generalities on completions of
$E_\infty$-rings and modules. 
By classical commutative algebra, $\widehat{R_{\mathfrak{p}}}$ is faithfully flat over $R_{\mathfrak{p}}$, so
that $M_{\mathfrak{p}} $ is itself contractible. 
\end{proof}

The basic step towards the thick subcategory theorem is given by the following Bousfield decomposition in $\md(R)$:
\begin{proposition} \label{bdecmdR}
If $M$ is an $R$-module, then $M \simeq 0$ if and only if $K( \mathfrak{p})_*
M  = 0$ for all $\mathfrak{p} \in \spec A$. 
\end{proposition}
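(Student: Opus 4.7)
The forward direction is immediate. For the converse, my plan is to suppose $K(\mathfrak{p})_* M = 0$ for every $\mathfrak{p} \in \spec A$ (where $A := \pi_0 R$) and deduce $M \simeq 0$. Since $R$ is even periodic, $\pi_* R \cong A[t^{\pm 1}]$ and the $\pi_* R$-module $\pi_* M$ is determined by the two $A$-modules $\pi_0 M$ and $\pi_1 M$; so I would reduce to proving that each of these $A$-modules vanishes.

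The heart of the argument is an algebraic claim: if $N$ is a nonzero $A$-module, then there is a prime $\mathfrak{p}$ with $\mathrm{Tor}^{A_\mathfrak{p}}_i(N_\mathfrak{p}, k(\mathfrak{p})) \neq 0$ for some $i$. To prove it, I would choose an associated prime $\mathfrak{p}$ of $N$ (which exists because $A$ is noetherian and $N \neq 0$), yielding an inclusion $k(\mathfrak{p}) \hookrightarrow N_\mathfrak{p}$ with cokernel $Q$. Since $A_\mathfrak{p}$ is regular local of Krull dimension $n$, its global dimension is also $n$, and $\mathrm{Tor}^{A_\mathfrak{p}}_n(k(\mathfrak{p}), k(\mathfrak{p})) \cong \bigwedge^n k(\mathfrak{p})^n \cong k(\mathfrak{p}) \neq 0$ by the Koszul computation. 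If every $\mathrm{Tor}^{A_\mathfrak{p}}_i(N_\mathfrak{p}, k(\mathfrak{p}))$ vanished, the long exact sequence of Tor attached to $0 \to k(\mathfrak{p}) \to N_\mathfrak{p} \to Q \to 0$ would give isomorphisms $\mathrm{Tor}^{A_\mathfrak{p}}_{i+1}(Q, k(\mathfrak{p})) \cong \mathrm{Tor}^{A_\mathfrak{p}}_i(k(\mathfrak{p}), k(\mathfrak{p}))$ for every $i \geq 0$, contradicting the vanishing of $\mathrm{Tor}^{A_\mathfrak{p}}_{n+1}$ forced by the global-dimension bound. So some Tor is nonzero.

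The remaining step is to promote this algebraic nonvanishing to the topological statement $K(\mathfrak{p})_* M \neq 0$. Writing $K(\mathfrak{p}) = R_\mathfrak{p}/(x_1, \ldots, x_n)$ as an iterated cofiber along the regular system of parameters yields a convergent spectral sequence
\[
E_2^{s,t} = \mathrm{Tor}^{A_\mathfrak{p}}_s((\pi_t M)_\mathfrak{p}, k(\mathfrak{p})) \Rightarrow K(\mathfrak{p})_{s+t} M,
\]
concentrated in columns $0 \leq s \leq n$ and periodic of period $2$ in $t$. I expect this last step to be the principal obstacle, since one must rule out the possibility that differentials cancel all the nonzero $E_2$-entries. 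The natural way forward is to exploit the regular-sequence hypothesis to identify $K(\mathfrak{p})_* M$ directly with the Koszul homology of $(x_1, \ldots, x_n)$ acting on $\pi_* M_\mathfrak{p}$, which matches the desired Tor groups up to grading; equivalently, the spectrum-level Koszul complex on $(x_1, \ldots, x_n)$ over $R_\mathfrak{p}$ is an honest finite resolution of $K(\mathfrak{p})$, so smashing with $M_\mathfrak{p}$ computes the abutment term-by-term and the spectral sequence degenerates, delivering the required contradiction with the hypothesis.
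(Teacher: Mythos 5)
Your overall strategy is a genuinely different route from the paper's, and the algebraic core of it is fine, but the final passage from algebra to topology has a real gap.

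On the differences: the paper proves the converse by reducing to the case $A = \pi_0 R$ local and inducting on Krull dimension, invoking at each stage Ravenel's localization lemma (\cite[Lemma 1.34]{ravloc}): for a nonzero $R$-module $N$ and $x \in \pi_0 R$, at least one of $N[x^{-1}]$ and $N/x$ is nonzero. Iterating along a regular system of parameters for $\mathfrak{m}$ shows $K(\mathfrak{m})_* M \neq 0$. Your approach instead pins down a useful prime by taking an associated prime of $\pi_0 M$ or $\pi_1 M$ (which exists even without finite generation, since $A$ is noetherian), and shows some $\mathrm{Tor}^{A_\mathfrak{p}}_i(N_\mathfrak{p}, k(\mathfrak{p})) \neq 0$ via the global-dimension bound on $A_\mathfrak{p}$. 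That algebraic step is correct and is a clean alternative to the paper's inductive reduction.

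The gap is exactly where you flag it: the claim that the Koszul filtration spectral sequence
$\mathrm{Tor}^{A_\mathfrak{p}}_s((\pi_t M)_\mathfrak{p}, k(\mathfrak{p})) \Rightarrow K(\mathfrak{p})_{s+t} M$
degenerates at $E_2$. The fact that the Koszul complex $\bigwedge^\bullet R_\mathfrak{p}^n$ is a finite filtration of $K(\mathfrak{p})$ whose associated graded gives, after applying $\pi_*(- \wedge_R M_\mathfrak{p})$, a complex with homology $\mathrm{Tor}$, tells you that the $E_2$-page is the Tor you computed; it does \emph{not} tell you that $d_r = 0$ for $2 \leq r \leq n$. ``Smashing with $M_\mathfrak{p}$ computes the abutment term-by-term'' is the assertion that the filtration splits after smashing, which is precisely what a higher differential would obstruct. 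Realizing a projective resolution of $\pi_* K(\mathfrak{p})$ by a cell structure on $K(\mathfrak{p})$ produces the correct $E_2$ term of the universal-coefficient/K\"unneth spectral sequence, but such spectral sequences do not degenerate in general. (It is true in the end that $K(\mathfrak{p})_* M \neq 0$ whenever the $E_2$-page is nonzero, but the reason is the proposition you are trying to prove, not spectral-sequence degeneration.) The cleanest repair is to dispense with the $n$-step spectral sequence and instead run the $n = 1$ case $n$ times: for a single $x$, the short exact sequence
\[
0 \to (\pi_t N)/x \to \pi_t(N/x) \to (\pi_{t-1} N)[x] \to 0
\]
shows $N/x \simeq 0$ iff $x$ acts bijectively on $\pi_* N$, and one iterates along $(x_1, \dots, x_n)$; to make the iteration close up one needs to know that at each stage the quotient is nonzero, which is where the paper's use of Ravenel's lemma and induction on dimension enters.
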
 
Note that we make no compactness assumptions on $M$ in this proposition. 
\begin{proof} 
One direction is obvious, so suppose $M$ is acyclic with respect to all the
homology theories $K( \mathfrak{p})_*$. We would like to show that $M$ is
contractible. Suppose the contrary. 

Since a module over $\pi_0 R$ vanishes if and only if all its localizations
at prime ideals vanish, we can assume that $\pi_0 R$ is a (regular) local ring. We
may use induction on the dimension of $\pi_0 R$, and thus assume that the
localization of $M$ at any non-maximal prime ideal $\mathfrak{p} \subset \pi_0 R$
 is trivial. 
It follows that if $x $ belongs to the maximal ideal $\mathfrak{m} \subset
\pi_0 R$,
then the $R[x^{-1}]$-module $M[x^{-1}]$ has the property that its localization
at any prime ideal is trivial, so $M[x^{-1}] \simeq 0$ for any $x \in
\mathfrak{m}$. 

Now we use a lemma from \cite[Lemma 1.34]{ravloc}: if $N$ is a nontrivial $R$-module,
then, for any $a \in \pi_0 R$, at least one of $N[x^{-1}]$ and $N/xN$ has to be
nontrivial. (This assertion would be false in
ordinary algebra.) It follows that $M/xM$ for each $x \in \mathfrak{m}$ is
nontrivial. Repeating this, and applying the same argument, it follows that
$M/(x_1, \dots, x_n) M$ is nontrivial for any system of parameters $(x_1, \dots,
x_n) $ for $\mathfrak{m}$. It follows that $K(\mathfrak{m})_* M \neq 0$. 
\end{proof}

Given the Bousfield decomposition, the rest of the proof of \Cref{thickaffine}
(which follows \cite{HS}) is now
completely formal, except for the last statement, and has been axiomatized in
\cite[\S 6]{axiomatic}. For completeness, we give a quick review of the argument. 

\begin{corollary} \label{fieldnilpotent}
If $\phi\colon \Sigma^k M \to M$ is a self-map in $\mo(R)$, then $\phi$ is nilpotent
if and only if $K(\mathfrak{p})_* (\phi)$ is nilpotent for each $\mathfrak{p}
\in \spec \pi_0 R$. 
\end{corollary}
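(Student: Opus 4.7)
The plan is to reduce nilpotence of $\phi$ to vanishing of a telescope in $\md(R)$ and then apply \Cref{bdecmdR}. One direction of the equivalence is automatic: since $K(\mathfrak{p})$ is multiplicative and $K(\mathfrak{p})_*$ respects composition, $\phi^N \simeq 0$ immediately implies $K(\mathfrak{p})_*(\phi)^N = 0$ for every prime $\mathfrak{p}$.

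For the nontrivial direction, assume $K(\mathfrak{p})_*(\phi)$ is nilpotent for every $\mathfrak{p} \in \spec \pi_0 R$, and form the telescope
\[ T \;:=\; \mathrm{hocolim}\bigl( M \xrightarrow{\Sigma^{-k}\phi} \Sigma^{-k} M \xrightarrow{\Sigma^{-2k}\phi} \Sigma^{-2k} M \to \cdots \bigr) \in \md(R). \]
The key observation is that $\phi$ is nilpotent if and only if $T \simeq 0$. Indeed, if $\phi^N \simeq 0$ then cofinally many transition maps in the telescope diagram are null, so $T$ is contractible; conversely, if $T \simeq 0$, then since $M \in \mo(R)$ is compact in $\md(R)$, the canonical map $M \to T$ factors through some finite stage $\Sigma^{-Nk} M$ as a null map, which unwinds to $\phi^N \simeq 0$ for some $N$.

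It therefore suffices to show $T \simeq 0$, and by \Cref{bdecmdR} this reduces to checking $K(\mathfrak{p})_* T = 0$ for every $\mathfrak{p} \in \spec \pi_0 R$. Because smashing with $K(\mathfrak{p})$ over $R$ preserves filtered colimits, as does $\pi_*$, the group $K(\mathfrak{p})_* T$ is computed as the colimit of the diagram obtained by applying $K(\mathfrak{p})_*$ to the telescope; that is, $K(\mathfrak{p})_* T$ is the localization of $K(\mathfrak{p})_* M$ at the endomorphism $K(\mathfrak{p})_*(\phi)$. Nilpotence of $K(\mathfrak{p})_*(\phi)$ forces this localization to vanish, which completes the argument.

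The principal obstacle is verifying the equivalence between nilpotence of $\phi$ and vanishing of $T$, which crucially uses that $M$ is compact in $\md(R)$ so that maps from $M$ into a filtered colimit factor through finite stages. Beyond that, the corollary is a formal consequence of \Cref{bdecmdR} together with the compatibility of $K(\mathfrak{p}) \wedge_R (-)$ and $\pi_*$ with filtered homotopy colimits.
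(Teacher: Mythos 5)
Your proof is correct and follows essentially the same route as the paper: one reduces nilpotence of $\phi$ to contractibility of the telescope $T$ via compactness of $M$, and then applies \Cref{bdecmdR} together with the fact that $K(\mathfrak{p}) \wedge_R (-)$ and $\pi_*$ commute with filtered colimits. Your version simply spells out a few steps (the identification of $K(\mathfrak{p})_* T$ as a localization, the precise role of compactness) that the paper's proof leaves implicit.
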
 
\begin{proof} This is a formal consequence of the Bousfield decomposition. 
Namely, since $M$ is compact, it follows that $\phi$ is nilpotent if and only if the
colimit 
of
\[ M \stackrel{\phi}{\to} \Sigma^{-k}M \stackrel{\phi}{\to} \Sigma^{-2k}M \to
\dots \]
is contractible, which by \Cref{bdecmdR} holds if and only if, for each $\mathfrak{p}
$, the diagram of finitely generated $k( \mathfrak{p})[t^{\pm 1}]$-modules induced by applying $K(
\mathfrak{p})_*$ has zero colimit. This implies the result. 
\end{proof} 

\begin{corollary} \label{nilpR}
Let $R'$ be an $R$-ring spectrum: that is, 
a monoid object in the homotopy category of $R$-modules. 

\begin{enumerate}
\item 
Suppose $\alpha \in \pi_* R$ maps to a nilpotent element under  
the Hurewicz map $\pi_* R \to K( \mathfrak{p})_* R$ for each $\mathfrak{p}$;
then $\alpha$ is nilpotent. 
\item Suppose a class $\alpha\colon R \to R' \wedge_R F$, for $F$ an 
$R$-module, is zero in $K(\mathfrak{p})_*$ for each $\mathfrak{p}$. Then it 
has the property that $\alpha^k\colon R \to R' \wedge_R F^{\wedge k}$ is null for
$k \gg 0$. 
\end{enumerate}
\end{corollary}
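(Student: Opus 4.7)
Both parts will reduce to the Bousfield decomposition \Cref{bdecmdR}, applied through \Cref{fieldnilpotent} in the first case and via a telescope construction in the second.

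For part (1), the plan is to apply \Cref{fieldnilpotent} with $M = R$ and $\phi \colon \Sigma^{|\alpha|} R \to R$ the self-map given by multiplication by $\alpha$. Iterating $\phi$ corresponds to taking powers of $\alpha$ in $\pi_*(R)$, so $\phi$ is nilpotent as a self-map precisely when $\alpha$ is nilpotent as an element. Likewise, $K(\mathfrak{p})_*(\phi)$ is multiplication by the image of $\alpha$ on the free rank-one module $K(\mathfrak{p})_*(R)$, and this is nilpotent exactly when the image of $\alpha$ is. The conclusion follows at once.

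For part (2), I will form a telescope. Let $F_k = R' \wedge_R F^{\wedge k}$, and construct a transition map $\eta_k \colon F_{k-1} \to F_k$ using the multiplication $\mu$ on $R'$ as the composite
\[ R' \wedge_R F^{\wedge(k-1)} \xrightarrow{1 \wedge \alpha \wedge 1} R' \wedge_R R' \wedge_R F^{\wedge k} \xrightarrow{\mu \wedge 1} R' \wedge_R F^{\wedge k}, \]
with $\alpha$ inserted so that the iterated composite $R = F_0 \to F_1 \to \cdots \to F_k$ is the class $\alpha^k$ of the statement. Set $F_\infty = \mathrm{hocolim}_k F_k$. The hypothesis $K(\mathfrak{p})_*(\alpha) = 0$, together with the K\"unneth isomorphism for $K(\mathfrak{p})$-homology (which holds because $K(\mathfrak{p})_*$ is a graded field), implies $K(\mathfrak{p})_*(\eta_k) = 0$ for every $k$ and every $\mathfrak{p}$. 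Hence the sequential colimit $K(\mathfrak{p})_*(F_\infty) = \mathrm{colim}_k K(\mathfrak{p})_*(F_k)$ vanishes for all $\mathfrak{p}$, and \Cref{bdecmdR} forces $F_\infty \simeq 0$.

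To conclude part (2), I invoke that $R$ is compact in $\md(R)$, so $[R, F_\infty] \cong \mathrm{colim}_k [R, F_k]$ with transition maps given by postcomposition with $\eta_{k+1}$; the canonical map $R = F_0 \to F_\infty$ represents the common image of all the classes $\alpha^k \in [R, F_k]$, and since $F_\infty \simeq 0$ this image vanishes, forcing $\alpha^k = 0$ for $k$ sufficiently large. The main obstacle will be the telescope bookkeeping: one must use the (only homotopy associative) multiplication on $R'$ to produce transition maps whose iterates really do recover the paper's $\alpha^k$, and verify via K\"unneth that the hypothesis on $\alpha$ kills each $\eta_k$ in $K(\mathfrak{p})$-homology. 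Once these are in hand the remainder of the argument is formal.
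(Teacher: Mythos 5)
Your proof is correct, and both parts hinge on the same Bousfield decomposition as the paper's argument, but part (2) takes a mildly different route that is worth comparing. The paper disposes of part (1) with a telescope and then deduces part (2) from it by forming the free $A_\infty$-algebra $JF = R \oplus F \oplus F^{\wedge 2} \oplus \cdots$ in $\md(R)$ and working inside the single ring spectrum $R' \wedge JF$: there $\alpha$ becomes an honest homotopy element whose powers project to the maps $\alpha^k$, so part (1) applied to $R' \wedge JF$ gives the conclusion directly. You instead build the telescope explicitly, taking $F_k = R' \wedge_R F^{\wedge k}$ with transition maps threaded through the multiplication on $R'$, show each transition map dies in every $K(\mathfrak{p})$-homology by K\"unneth, kill the colimit by \Cref{bdecmdR}, and then use compactness of $R$ to conclude. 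These are really the same telescope, just packaged differently: the free-$A_\infty$-algebra route bundles the transition maps into a single multiplication (so that "nilpotent element" replaces "eventually null composite" as the bookkeeping device), while your route keeps the tower visible and trades the $A_\infty$-construction for an explicit compactness/filtered-colimit argument at the end.

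The worry you flag about homotopy associativity of $R'$ is not actually a gap. For a monoid object in the homotopy category, all parenthesizations of an $n$-fold product agree (this is generalized associativity and needs no higher coherence), so the iterated composite $\eta_k \circ \cdots \circ \eta_1 \circ (\text{unit})$ is unambiguously the class $\alpha^k$ of the statement, and the K\"unneth argument goes through as you sketch. One small notational slip: with $F_k = R' \wedge_R F^{\wedge k}$ one has $F_0 = R'$, not $R$, so one should either precompose with the unit $R \to R'$ or reset $F_0 := R$ and take $\eta_1 = \alpha$; either fix is immediate and does not affect the argument.
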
 
\begin{proof} 
The first claim follows using a similar telescope; it implies the second claim
by considering $R' \wedge JF$ for $JF = R \oplus F \oplus F^{\wedge 2} \oplus
\dots $ the free $A_\infty$-algebra (in $\md(R)$) on $F$. 
\end{proof}

\subsection{Proof of the main result}
We are now ready to prove a thick subcategory theorem in the affine case. 
We will split this into two pieces. 

\begin{proposition} 
\label{firstclaim}
Let $M, N \in \mo(R)$. Suppose $\supp N \subset \supp M$. Then the thick
subcategory generated by $M$ contains $N$. 
\end{proposition}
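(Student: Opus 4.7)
The plan is to adapt the Hopkins--Smith argument using the nilpotence consequences \Cref{fieldnilpotent} and \Cref{nilpR} already at hand. First I would observe that the thick subcategory $\langle M\rangle\subset\mo(R)$ generated by $M$ is automatically a tensor ideal: the unit $R$ generates $\mo(R)$ under finite (co)fibers and retracts, so for any $X\in\mo(R)$ the object $X\wedge_R Y$ is built from $Y$ by those same operations whenever $Y\in\langle M\rangle$, and hence stays inside $\langle M\rangle$. Writing $T=F_R(M,M)\simeq DM\wedge_R M$, an $A_\infty$-algebra in $\md(R)$, we therefore have $T\in\langle M\rangle$ and more generally $X\wedge_R T\in\langle M\rangle$ for every perfect $X$.

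Let $\eta\colon R\to T$ be the unit and $W=\mathrm{fib}(\eta)$, giving a cofiber sequence $W\xrightarrow{\pi}R\xrightarrow{\eta}T$ of perfect $R$-modules. The heart of the argument is to check that $\pi\wedge_R N\colon W\wedge_R N\to N$ is $K(\mathfrak{p})_*$-null for every $\mathfrak{p}\in\spec\pi_0 R$. If $\mathfrak{p}\notin\supp M$ then $K(\mathfrak{p})_*T=0$, and the hypothesis $\supp N\subset\supp M$ forces $K(\mathfrak{p})_*N=0$, so both source and target of $\pi\wedge N$ vanish on $K(\mathfrak{p})$-homology. If $\mathfrak{p}\in\supp M$ then $K(\mathfrak{p})_*T$ is a (nonzero) matrix algebra over the graded field $K(\mathfrak{p})_*$ whose unit $K(\mathfrak{p})_*\eta$ is injective, so the long exact sequence of the fiber sequence forces $K(\mathfrak{p})_*\pi=0$. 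Adjointing $\pi\wedge N$ to a class $\alpha\colon R\to F_R(N,N)\wedge_R DW$ (using that $W$ is perfect and that $F_R(N,N)$ is an $A_\infty$ $R$-algebra), I deduce that $\alpha$ is $K(\mathfrak{p})_*$-null at every prime, so \Cref{nilpR}(2) produces $k\gg 0$ with $\alpha^k\colon R\to F_R(N,N)\wedge_R DW^{\wedge k}$ null; unwinding adjunctions, this says the iterated map $W^{\wedge k}\wedge_R N\to N$ (apply $\pi$ in each factor and multiply in $R$) is nullhomotopic.

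To conclude, I would show by induction on $k$ that the cofiber $C_k=\mathrm{cofib}(W^{\wedge k}\to R)$ lies in $\langle M\rangle$. The base case $C_1=T$ is direct. For the inductive step, apply the octahedral axiom to the factorization $W^{\wedge(k+1)}\to W^{\wedge k}\to R$: combined with the cofiber sequence $W^{\wedge(k+1)}\to W^{\wedge k}\to W^{\wedge k}\wedge T$ (obtained by smashing $W\to R\to T$ with $W^{\wedge k}$), this exhibits $C_{k+1}$ as an extension of $C_k$ and $W^{\wedge k}\wedge T$, both of which lie in $\langle M\rangle$ by the inductive hypothesis and the tensor-ideal property. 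Combined with the previous paragraph, the cofiber sequence $W^{\wedge k}\wedge N\to N\to C_k\wedge N$ now splits, exhibiting $N$ as a retract of $C_k\wedge N\in\langle M\rangle$, whence $N\in\langle M\rangle$.

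The main obstacle is the $K(\mathfrak{p})_*$-nullity verification in the second paragraph: it is the interaction between the support hypothesis $\supp N\subset\supp M$ (used at primes outside $\supp M$) and the ring structure on $K(\mathfrak{p})_*T$ (used at primes in $\supp M$) that makes \Cref{nilpR}(2) applicable. Once this is in place the remaining steps are formal octahedral bookkeeping.
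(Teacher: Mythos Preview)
Your argument is correct and is essentially the same as the paper's own proof: both take the fiber $W$ (the paper calls it $F$) of the unit $R\to M\wedge_R\mathbb{D}M$, adjoint $1_N\wedge\pi$ to a class into $\mathrm{End}(N)\wedge_R\mathbb{D}W$, use the support hypothesis together with \Cref{nilpR} to kill a high smash power, and conclude that $N$ splits off the cofiber of $W^{\wedge k}\wedge N\to N$. Your version is somewhat more explicit than the paper's (you spell out the tensor-ideal property, the case split on $\mathfrak{p}$, and the octahedral induction for $C_k$), but the strategy is identical.
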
 
\begin{proof}The first claim is a consequence of \Cref{fieldnilpotent}. 
Hypotheses as in the claim, we consider a fiber sequence
\[ F \stackrel{\phi}{\to}R \to M \wedge_R \mathbb{D}M,  \]
where $\mathbb{D}M = \hom_R(M, R)$ is the Spanier-Whitehead dual (in
$R$-modules). By hypothesis, $\phi$ has the property that it 
is the zero map in $K( \mathfrak{p})_*$-homology for $\mathfrak{p} \in \supp M$ (but
not otherwise). 

Moreover, the cofiber of $\phi$ belongs to the thick subcategory $\mathcal{C}_M
\subset \mo(R)$ generated by $M$. 
Thus the cofiber of each iterated tensor power $\phi^{\wedge r}\colon F^{\wedge r}
\to R$ belongs to $\mathcal{C}_M$ too, and similarly for the cofiber of 
$$1_N \wedge
\phi^{r}\colon N \wedge F^{\wedge r} \to N.$$
But for large $r \gg 0$, these maps are zero. 
In fact, adjointing over, $\phi$ gives a map $\psi\colon R \to \mathrm{End}(N) \wedge
\mathbb{D}F$ and we equivalently need to show that the sufficiently high
iterated tensor powers $\psi^r\colon R \to \mathrm{End}(N) \wedge
\mathbb{D}F^{\wedge r}$
are zero. Since $\psi$ is zero on $K( \mathfrak{p})_*$-homology for
\emph{all} $\mathfrak{p}$, this follows from \Cref{nilpR}. 

If $1_N \wedge \phi^r$ is nullhomotopic for $r \gg 0$, it follows that the
cofiber (which we saw belongs to $\mathcal{C}_M$) contains $N$ as a direct
summand, and we are done with the first claim. 
\end{proof}

\begin{proposition} 
\label{secondclaim}
Let $Z \subset \spec \pi_0 R$. Then there exists $M \in \mo(R)$ such that
$\supp M = Z$.
\end{proposition}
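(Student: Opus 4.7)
The plan is to realize any closed $Z \subset \spec \pi_0 R$ as the support of a Koszul-type perfect module. Since $\pi_0 M \oplus \pi_1 M$ is finitely generated over the noetherian ring $\pi_0 R$, $\supp M$ is automatically closed, so this is the essential case. Because $\pi_0 R$ is noetherian, choose finitely many elements $a_1, \dots, a_m \in \pi_0 R$ generating an ideal $I$ with $V(I) = Z$, and form
\[ M \;=\; R/a_1 \wedge_R R/a_2 \wedge_R \cdots \wedge_R R/a_m, \]
where each $R/a_i$ is the cofiber of multiplication by $a_i$ on $R$. Each factor is perfect, hence so is $M$.

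Next I would compute the residue field homology $K(\mathfrak{p})_* M$ for each $\mathfrak{p} \in \spec \pi_0 R$. From the defining cofiber sequence, $K(\mathfrak{p})_*(R/a_i)$ vanishes when $a_i$ is a unit in $k(\mathfrak{p})$ (i.e., $a_i \notin \mathfrak{p}$), and is a free $K(\mathfrak{p})_*$-module of rank two when $a_i \in \mathfrak{p}$. Since $K(\mathfrak{p})_* = k(\mathfrak{p})[t^{\pm 1}]$ is a graded field, the K\"unneth isomorphism for $K(\mathfrak{p})$ applies to the iterated smash product, so that
\[ K(\mathfrak{p})_* M \neq 0 \iff a_1, \dots, a_m \in \mathfrak{p} \iff \mathfrak{p} \in Z. \]

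To finish, the preceding proposition characterizing $\mo_Z(R)$ in terms of vanishing of residue field homology off $Z$ yields the inclusion $\supp M \subset Z$. Conversely, for $\mathfrak{p} \in Z$ the nonvanishing $K(\mathfrak{p})_* M \neq 0$ forces $M_{\mathfrak{p}} \neq 0$ (since $K(\mathfrak{p})$ is built out of $R_{\mathfrak{p}}$), giving $\mathfrak{p} \in \supp M$. There is no serious obstacle here: the construction is the direct topological analog of cutting $\spec \pi_0 R$ down by a Koszul complex, and the only substantive input is the K\"unneth formula for $K(\mathfrak{p})$, which is automatic from its coefficient ring being a graded field.
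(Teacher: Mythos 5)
Your proof is correct and follows essentially the same approach as the paper: choose generators $a_1,\dots,a_m$ of an ideal cutting out $Z$, form the Koszul-type smash product $M = R/a_1 \wedge_R \cdots \wedge_R R/a_m$, and use the K\"unneth isomorphism for $K(\mathfrak{p})$ together with the vanishing/non-vanishing of $K(\mathfrak{p})_*(R/a_i)$ to show $\supp M = Z$. Your observation that $Z$ must be closed (since perfect modules have closed support) and your explicit spelling-out of the rank-two K\"unneth computation are just slightly more detailed versions of what the paper leaves implicit.
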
 
\begin{proof}
 Let $Z \subset \spec \pi_0 R$ be a closed subset corresponding
to the radical ideal $I$. Choose generators $x_1, \dots, x_n \in I$ and take as
the desired module 
\[ M = R/(x_1, \dots, x_n) \simeq R/x_1 R \wedge_R \dots \wedge_R R/x_n R.  \]
Clearly this vanishes when any of the $x_i$ are inverted. Conversely, if
$\mathfrak{p} \supset I$, then $R/x_i R$ has nontrivial $K(
\mathfrak{p})_*$-homology (since multiplication by $x_i$ induces the zero map
on $K( \mathfrak{p})_*$), 
and therefore the tensor power that defines $M$ has nontrivial $K(
\mathfrak{p})_*$-homology as well. 
\end{proof}

\begin{theorem} \label{thickaffine}
The thick subcategories of $\mo(R)$ are precisely the
$\left\{\mo_Z(R)\right\}$ for $Z \subset \spec \pi_0 R$ closed under
specialization, and these are all distinct. 
\end{theorem}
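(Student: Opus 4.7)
The two propositions already proven — \Cref{firstclaim} (a thick subcategory generated by $M$ contains every $N$ with $\supp N \subset \supp M$) and \Cref{secondclaim} (every closed subset of $\spec \pi_0 R$ is realized as some $\supp M$) — essentially contain all the content; what remains is a book-keeping argument to upgrade them from closed subsets to arbitrary specialization-closed subsets. The plan is to (a) check that any thick subcategory of $\mo(R)$ equals $\mo_Z(R)$ for a canonical $Z$, and (b) verify distinctness.

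For (a), given a thick subcategory $\mathcal{C} \subset \mo(R)$, define
\[ Z = Z(\mathcal{C}) := \bigcup_{M \in \mathcal{C}} \supp M \subset \spec \pi_0 R. \]
Since each $\supp M$ is closed, $Z$ is a union of closed sets and hence closed under specialization. The containment $\mathcal{C} \subset \mo_Z(R)$ is immediate. For the reverse inclusion, fix $N \in \mo_Z(R)$. Because $\pi_0 R$ is noetherian, the closed set $\supp N$ has only finitely many irreducible components, with generic points $\mathfrak{p}_1, \dots, \mathfrak{p}_k$. Each $\mathfrak{p}_i$ lies in $Z$, so there exists $M_i \in \mathcal{C}$ with $\mathfrak{p}_i \in \supp M_i$; since $\supp M_i$ is closed, it then contains $\overline{\{\mathfrak{p}_i\}}$. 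Setting $M = M_1 \oplus \cdots \oplus M_k \in \mathcal{C}$, we obtain
\[ \supp N = \bigcup_{i=1}^{k} \overline{\{\mathfrak{p}_i\}} \subset \bigcup_{i=1}^k \supp M_i = \supp M. \]
\Cref{firstclaim} now places $N$ in the thick subcategory generated by $M$, which lies in $\mathcal{C}$.

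For (b), suppose $Z_1, Z_2 \subset \spec \pi_0 R$ are distinct specialization-closed subsets; without loss of generality pick $\mathfrak{p} \in Z_1 \setminus Z_2$. Apply \Cref{secondclaim} to the closed set $V(\mathfrak{p})$ (for instance, using $M = R/(x_1, \dots, x_n)$ for generators $x_1, \dots, x_n$ of $\mathfrak{p}$, which exist by noetherianness) to obtain a perfect $R$-module $M$ with $\supp M = V(\mathfrak{p})$. Since $\mathfrak{p} \in Z_1$ and $Z_1$ is closed under specialization, $V(\mathfrak{p}) \subset Z_1$, so $M \in \mo_{Z_1}(R)$; but $\mathfrak{p} \in \supp M$ and $\mathfrak{p} \notin Z_2$, so $M \notin \mo_{Z_2}(R)$. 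Hence $\mo_{Z_1}(R) \neq \mo_{Z_2}(R)$.

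I don't expect any genuine obstacle here: the nilpotence-theoretic input (\Cref{fieldnilpotent}, \Cref{nilpR}) and the Bousfield decomposition \Cref{bdecmdR} have done the real work in \Cref{firstclaim,secondclaim}. The only subtlety is noticing that noetherianness of $\pi_0 R$ lets one replace the possibly infinite specialization-closed set $Z$ by a finite union of closures of generic points when testing containment for a given perfect module $N$; without this reduction one could not produce a single $M \in \mathcal{C}$ dominating $\supp N$, and \Cref{firstclaim} would not directly apply.
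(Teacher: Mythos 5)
Your proof is correct and follows the same route as the paper's: associate to a thick subcategory $\mathcal{C}$ the specialization-closed union of supports, then use \Cref{firstclaim} (via noetherianness of $\pi_0 R$ to reduce $\supp N$ to finitely many irreducible components and find a single dominating $M \in \mathcal{C}$ by taking a direct sum) and \Cref{secondclaim} to establish surjectivity and injectivity of $Z \mapsto \mo_Z(R)$. You spell out the noetherian reduction and the distinctness check more explicitly than the paper does, but there is no difference in substance.
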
 

We note that this result has been independently obtained in forthcoming work
of Benjamin
Antieau, Tobias Barthel, and David Gepner, and is likely known to others as
well. 

\begin{proof} 
Given a thick subcategory
$\mathcal{C} \subset \mo(R)$,
we associate to it the union $Z'$ (in $\spec R$) of the supports of all the
objects in $\mathcal{C}$. 
Note that if $Z_0, Z_1$  occur as supports of objects in $\mathcal{C}$, then so does
$Z_0 \cup Z_1$, by taking the direct sum. 
\Cref{firstclaim} will imply that this subset $Z'$, which is closed under specialization,
determines $\mathcal{C}$ in turn. 
\Cref{secondclaim} will imply that we can obtain any subset $Z' \subset \spec R$ closed under
specialization in this manner, by taking these modules for each closed subset
of $Z'$ and the thick subcategory they (together) generate.

\end{proof}

\section{Vanishing lines in the descent spectral sequence}

In the previous section, we classified the thick subcategories of
$\md^\omega(R)$ when $R$ is an even periodic $E_\infty$-ring with $\pi_0 R$
regular noetherian. 
The classification relied on the construction of certain ``residue fields'' of
$R$ to detect nilpotence, and then the verification that all the possible
thick subcategories one could thus hope for actually exist. The latter part was
very easy in the affine case, but it is somewhat trickier in the general case:
we actually will need to produce elements in the homotopy groups of $\Gamma(
\mathfrak{X}, \otop)$ for $\mathfrak{X} = (X, \otop)$ an even periodic derived
stack.

We will do this by producing such elements in the descent spectral sequence, and
by
establishing a general horizontal vanishing line for such descent spectral sequences
(based on nilpotence technology). The latter is the goal of this section, and
may be of independent interest. We
note that in the setting of \Cref{HSthick}, one is not working  in an
$E_n$-localized setting, so the corresponding vanishing line arguments are
significantly more difficult than they are for us. 

\subsection{Towers}
Let $\mathcal{C}$ be a stable $\infty$-category. 

\begin{definition} 
Let $\mathrm{Tow}( \mathcal{C})$ be the $\infty$-category of \emph{towers} of
objects of $\mathcal{C}$: that is $\mathrm{Tow}( \mathcal{C}) \simeq
\mathrm{Fun}( (\mathbb{Z}_{\geq 0})^{op},
\mathcal{C})$.
\end{definition} 

 To any element of $\mathrm{Tow}( \mathcal{C})$ and object $P \in
\mathcal{C}$, there is associated a spectral
sequence converging to the homotopy groups of the space (or spectrum) of maps from $P$ into
the homotopy inverse limit. For instance, given a cosimplicial spectrum, the
spectral sequence associated to the $\mathrm{Tot}$ tower is the Bousfield-Kan
homotopy spectral sequence. 

\newcommand{\tow}{\mathrm{Tow}}
\newcommand{\towg}{\mathrm{Tow}^{\mathrm{fast}}}
\newcommand{\townil}{\mathrm{Tow}^{\mathrm{nil}}}
Let us single out a certain subcategory $\mathrm{Tow}^{\mathrm{nil}}(
\mathcal{C})$ of $\mathrm{Tow}( \mathcal{C})$ consisting of objects whose
spectral sequences have extremely good convergence properties.
\begin{definition}
$\townil_{}(\mathcal{C}) \subset \tow(\mathcal{C})$ is the 
subcategory of $\mathrm{Tow}( \mathcal{C})$ consisting of towers $\dots \to X_n
\to X_{n-1} \to \dots \to X_0$ with the property 
that there exists an integer $r > 0$, such that all $r$-fold composites in the
tower are nullhomotopic. The subcategory $\townil (\mathcal{C})$ is the union of an ascending sequence
\[ \townil_1 (\mathcal{C})\subset \townil_2 (\mathcal{C})\subset \dots ,  \]
where $\townil_{r'} \subset \townil$ consists of towers such that every
$r'$-fold composite is null. 
\end{definition}

\begin{definition}
More generally, given a collection of objects $\mathfrak{U} \subset
\mathcal{C}$, we define $\townil_{\mathfrak{U}}( \mathcal{C})$ as the
collection of towers $\dots \to X_n \to X_{n-1} \to \dots $ such that there
exists $r \in \mathbb{Z}_{>0}$ such that every $r$-fold composite $X_n \to
X_{n-r}$ induces the zero map
\[ [U, X_{n}]_* \to [U, X_{n-r}]_*, \quad \text{for each } U \in \mathfrak{U}.  \]
Taking $\mathfrak{U} = \mathcal{C}$ gives $\townil( \mathcal{C})$. 
\end{definition}

We also need a generalization of $\townil_{\mathfrak{U}}( \mathcal{C})$ to
handle towers that are ``very close to constant,'' but not necessarily at zero. 
\begin{definition}
We define $\towg_{\mathfrak{U}}( \mathcal{C})$ to be the collection of those towers 
$\left\{X_n\right\}$ such that 
$\varprojlim_i X_i$ exists and such that the cofiber of the map  of towers
\[ \left\{\varprojlim_{i } X_i\right\}  \to \left\{X_n\right\}, \]
where the first term is the constant tower, belongs to $\townil_{\mathfrak{U}}(
\mathcal{C})$.  
\end{definition}

The basic permanency property of these subcategories is given by:

\begin{proposition}[Hopkins-Palmieri-Smith \cite{HPS}] \label{hpsprop}
For each $\mathfrak{U} \subset \mathcal{C}$, 
$\townil_{\mathfrak{U}}( \mathcal{C}) ,
\towg_{\mathfrak{U}}(\mathcal{C})\subset \mathrm{Tow}( \mathcal{C})$ are thick
subcategories. 
\end{proposition}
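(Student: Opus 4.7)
The plan is to verify, for each class, the defining closure properties of a thick subcategory: stability under (co)fiber sequences and shifts, together with closure under retracts. Shifts are immediate because $[U, \Sigma^k X_n]_* = [U, X_n]_{*-k}$, so the $r$-fold composite condition is invariant under shifting a tower. Retract closure for $\townil_{\mathfrak{U}}$ is equally direct, since an $r$-fold composite in a retract factors through the corresponding composite in the ambient tower. The substantive content in both cases is closure under cofibers, and I would handle $\townil_{\mathfrak{U}}$ first and then bootstrap up to $\towg_{\mathfrak{U}}$.

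For $\townil_{\mathfrak{U}}$, I would show that if $\{X_n\} \to \{Y_n\} \to \{Z_n\}$ is a cofiber sequence of towers with $\{X_n\} \in \townil_{\mathfrak{U}, r}$ and $\{Z_n\} \in \townil_{\mathfrak{U}, s}$, then $\{Y_n\} \in \townil_{\mathfrak{U}, r+s}$, via a direct diagram chase on homotopy classes. Given $U \in \mathfrak{U}$ and $f \in [U, Y_n]_*$, the image of $f$ in $[U, Z_n]_*$ is killed upon passage to $[U, Z_{n-s}]_*$, so the image of $f$ in $[U, Y_{n-s}]_*$ lifts to some class $\tilde{f} \in [U, X_{n-s}]_*$; the hypothesis on $\{X_n\}$ then kills $\tilde{f}$ in $[U, X_{n-s-r}]_*$, and hence kills the image of $f$ in $[U, Y_{n-s-r}]_*$. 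Thus every $(r+s)$-fold composite in $\{Y_n\}$ is zero on $[U, -]_*$.

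For $\towg_{\mathfrak{U}}$, I would reduce to the previous case. Because $\mathcal{C}$ is stable, cofiber sequences are fiber sequences and are preserved by arbitrary limits; hence if $\{X_n\} \to \{Y_n\} \to \{Z_n\}$ is a cofiber sequence with $\{X_n\}, \{Z_n\} \in \towg_{\mathfrak{U}}$, then $\varprojlim Y_n$ exists and $\varprojlim X_n \to \varprojlim Y_n \to \varprojlim Z_n$ is again a cofiber sequence. Taking levelwise cofibers of the comparison maps from the constant limit-towers (equivalently, applying the $3 \times 3$ lemma to the resulting map of cofiber sequences) produces a cofiber sequence of towers
\[
\{X_n / \varprojlim X_n\} \to \{Y_n / \varprojlim Y_n\} \to \{Z_n / \varprojlim Z_n\}
\]
whose outer terms lie in $\townil_{\mathfrak{U}}(\mathcal{C})$ by hypothesis; the first step of the argument then forces the middle term into $\townil_{\mathfrak{U}}(\mathcal{C})$, so $\{Y_n\} \in \towg_{\mathfrak{U}}(\mathcal{C})$. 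Retract closure for $\towg_{\mathfrak{U}}$ follows the same pattern, applied to the error tower and using retract closure of $\townil_{\mathfrak{U}}$. There is no genuine obstacle: the proof is formal once one observes that nilpotence orders add under extensions and that the formation of the ``error tower'' $\{X_n\} \mapsto \{X_n / \varprojlim X_n\}$ is exact in the $3 \times 3$ sense.
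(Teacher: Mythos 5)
Your proof is correct and takes essentially the same approach as the paper: the $\townil_{\mathfrak{U}}$ closure under cofibers is the same diagram chase showing that nilpotence orders add (the paper factors the composite through the fiber term exactly as you do, with indices running $n+r+r'$ down to $n$), and your reduction of $\towg_{\mathfrak{U}}$ to $\townil_{\mathfrak{U}}$ via stability of (co)fiber sequences under inverse limits plus the $3\times 3$ lemma is a correct expansion of the one-line reduction the paper asserts. The only thing glossed over (by both you and the paper) is that for retract closure of $\towg_{\mathfrak{U}}$ one needs $\varprojlim Y_n$ to exist when $\{Y_n\}$ is a retract of a tower whose limit exists, which in a general stable $\infty$-category requires idempotent-completeness; this holds in every application in the paper, so it is not a substantive defect.
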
 

\begin{proof} 
The assertion about 
$\towg_{\mathfrak{U}}(\mathcal{C})$ follows from the assertion about 
$\townil_{\mathfrak{U}}( \mathcal{C}) $, so we need only handle this case.
\Cref{hpsprop} is {essentially} contained in \cite[Cor. 2.3]{HPS}, but we give
the proof. It is easy to see that $\townil_{\mathfrak{U}}(\mathcal{C}) \subset
\tow(\mathcal{C})$ is closed under retracts and under suspensions
and desuspensions. It
suffices to show 
that given a cofiber sequence in $\tow(\mathcal{C})$
\[ \left\{X_n\right\}  \to \left\{Y_n\right\} \to \left\{Z_n\right\},\]
if $\left\{X_n\right\}, \left\{Z_n\right\} \in
\townil_{\mathfrak{U}}(\mathcal{C})$, then $\left\{Y_n\right\} \in
\townil_{\mathfrak{U}}(\mathcal{C})$. 

To see this, suppose every $r$-fold composite in $\left\{X_n\right\}$ induces
the zero map on $[U, \cdot]_*$ for each $U \in \mathfrak{U}$, and every $r'$-fold composite in
$\left\{Z_n\right\}$ induces the zero map on $[U, \cdot]_*$ for each $U \in
\mathfrak{U}$. We claim that every $(r + r')$-fold composite  in
$\left\{Y_n\right\}$ induces the zero map on 
$[U, \cdot]_*$ for each $U \in \mathfrak{U}$. Fix $U \in \mathfrak{U}$ and let
$n \geq 0$. Choose any map $f \colon \Sigma^k U \to Y_{n+r+r'}$ for $k \in \mathbb{Z}$
arbitrary.  Then we have a commutative diagram 
in $\mathcal{C}$,
\[ \xymatrix{
X_{n+r + r'} \ar[d] \ar[r] & Y_{n+r + r'}\ar[d] \ar[r] &  Z_{n+r + r'}\ar[d]  \\
X_{n+ r} \ar[d] \ar[r] & Y_{n+ r}\ar[d] \ar[r] &  Z_{n+ r}\ar[d] \\
X_n \ar[r] &  Y_n \ar[r] &  Z_n
}.\]
A diagram chase now shows that the composite of $f$ with $Y_{n+r + r'} \to Y_n$
is null. In fact, the composite of $f$ with $Y_{n+r + r'} \to Y_{n+r} \to
Z_{n+r}$ is null, and so factors through $X_{n+r}$. But the composite of any
map from $\Sigma^k U \to X_{n+r}$ with $X_{n+r} \to X_n$ is null. 
\end{proof}

\begin{remark}
Given a tower $\dots \to X_n \to X_{n-1}\to \dots \to X_0$ in
$\tow(\mathcal{C})$, it can belong to
$\townil(\mathcal{C})$ only if it is \emph{pro-zero}: that is, if the associated pro-object 
 in $\mathrm{Pro}(\mathcal{C})$ is equivalent to the zero pro-object, as its
 inverse limit is contractible and this property is preserved under any exact functor. 
The converse  is false: if a cofinal subset of the $X_i$'s are contractible,
the tower is automatically pro-zero, but such a tower need not belong to
$\townil$. The associated spectral sequence may support arbitrarily long
differentials. 
\end{remark}

\subsection{Vanishing lines}
We keep the notation of the previous subsection. 
We can give an interpretation of $\townil_{\mathfrak{U}}(\mathcal{C})$
in terms of vanishing lines. 
Before it, we make the following definitions. 

\begin{definition} 
Given an inverse system $\dots \to A_n \to A_{n-1} \to \dots \to A_0$ of
abelian groups, we define the \emph{$r$th derived system} to be the inverse
system of abelian groups $\left\{\mathrm{im}(A_{n+r} \to A_n)\right\}_{n \in \mathbb{Z}_{\geq
0}}$. 
\end{definition} 

\begin{definition} 
An inverse system of abelian groups $\dots \to A_n \to A_{n-1} \to \dots \to
A_0$ is \emph{eventually constant} if the maps $A_n \to A_{n-1}$ are
isomorphisms for $n \gg 0$. 
\end{definition} 

Since $\mathcal{C}$ is a stable $\infty$-category, there is a natural
\emph{spectrum}  of maps between any two objects $X, Y \in \mathcal{C}$, which
for the next result we write simply as $\hom(X, Y)$. 
\begin{proposition} \label{veryprozero}
The tower $\dots \to X_n \to X_{n-1} \to \dots$ in $\mathcal{C}$ belongs 
to $\townil_{\mathfrak{U}}(\mathcal{C})$ if and only if, for each $U \in
\mathfrak{U}$, the spectral sequence associated to the tower of spectra
\[ \dots \to \hom(U, X_n) \to \hom(U, X_{n-1}) \to \dots \to \hom(U, X_0),  \]
collapses to zero at a finite stage independent of $U$ (that is, the $r$th page is identically
zero for some $r$, independent of $U$). 
\end{proposition}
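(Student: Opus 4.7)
The plan is to identify, for each $U \in \mathfrak{U}$, the spectral sequence associated to the tower $Y_\bullet = \hom(U, X_\bullet)$ via its exact couple. Setting $A_1^{s,t} = \pi_{t-s}(Y_s)$ with $i\colon A_1^{s+1,t+1} \to A_1^{s,t}$ induced by the tower and $E_1^{s,t} = \pi_{t-s}(\mathrm{fib}(Y_s \to Y_{s-1}))$, one iterates the derived-couple construction to obtain $A_r = i^{r-1}(A_1)$, so that
\[ A_r^{s,t} = \mathrm{im}\bigl(\pi_{t-s} Y_{s+r-1} \to \pi_{t-s} Y_s\bigr), \]
while $E_r$ sits in a derived exact triangle $A_r \xrightarrow{i} A_r \to E_r \to A_r[1]$. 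Under this dictionary, the statement that the $r$th page is identically zero translates to $A_r = 0$.

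With this in hand, both directions become immediate. If $\{X_n\} \in \townil_\mathfrak{U}$ with witnessing integer $r'$, then every $r'$-fold composite $X_{n+r'} \to X_n$ induces the zero map on $[U, \cdot]_*$ for each $U \in \mathfrak{U}$; on the exact couple this says $i^{r'} = 0$ on $A_1$, so $A_{r'+1} = 0$ and the stage-$(r'+1)$ page vanishes uniformly in $U$. Conversely, if $r$ is independent of $U$ and $A_r = 0$ for every $U \in \mathfrak{U}$, then $\mathrm{im}(\pi_{t-s} Y_{s+r-1} \to \pi_{t-s} Y_s) = 0$ for all $s, t$, which says that $X_{s+r-1} \to X_s$ induces zero on $[U, \cdot]_*$ for each $U$, placing the tower in $\townil_\mathfrak{U}$.

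The main delicacy is the correct reading of ``the $r$th page is identically zero'': a naive interpretation as just $E_r = 0$ only forces $i$ to act by an isomorphism on $A_r$, as one sees with the constant tower at a nonzero object, so one must either include the vanishing of the derived image $A_r$ in the formulation or invoke convergence of the spectral sequence (e.g.\ under a $\towg$-style hypothesis on the tower) to deduce $A_r = 0$ from $E_r = 0$. Once this is settled, the uniformity in $U$ demanded in the definition of $\townil_\mathfrak{U}$ matches the uniformity of the stage $r$ in the statement, and no further argument is needed.
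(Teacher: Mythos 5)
Your exact-couple framework ($A_r^{s,t} = \mathrm{im}(\pi_{t-s}Y_{s+r-1}\to\pi_{t-s}Y_s)$, $E_r$ sitting in a derived triangle, etc.) matches the paper's, and the forward direction (membership in $\townil_{\mathfrak{U}}$ implies uniform collapse) is fine. But the converse direction has a genuine gap, and it is precisely the ``delicacy'' you flag at the end. You correctly observe that $E_r=0$ only forces the derived couple map to be an isomorphism on $A_r$, not that $A_r=0$; but your proposed resolutions --- reinterpret ``$r$th page identically zero'' as $A_r=0$, or invoke convergence via a $\towg$-style hypothesis --- are not what is needed, and in particular they amount to proving a different statement than the one asserted.

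The missing idea, which the paper uses and you omit, is that a tower indexed by $\mathbb{Z}_{\geq 0}$ is extended to all of $\mathbb{Z}$ by setting $X_{-1}=X_{-2}=\dots=0$, so the spectral sequence ``associated to the tower'' has $F_0 = X_0$ as its bottom fiber. With this convention, $A_r^{s}=0$ trivially for $s<0$; since $E_r\equiv 0$ makes the map $A_r^{s+1}\to A_r^s$ an isomorphism for all $s\in\mathbb{Z}$, one concludes $A_r^s=0$ for all $s$ by induction starting from $s<0$. This also dissolves your purported counterexample: for the constant tower at a nonzero object extended by zero below degree $0$, the fiber $F_0$ equals $X_0\neq 0$, the $d_1$ differential vanishes, and $E_r^{0,\ast}\neq 0$ for every $r$, so the hypothesis of the proposition never holds and there is no tension. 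In short, the statement is correct as written once the standard convention is used; you need to supply the extension-by-zeros step rather than modify the statement or import a $\towg$ hypothesis.
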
 

\begin{proof} 
We consider first the case $\mathcal{C} = \sp$ and $\mathfrak{U} =
\left\{S^0\right\}$, and assume that we have a tower of spectra
$\left\{X_n\right\}_{n \in \mathbb{Z}_{\geq 0}}$, which we extend to $n < 0$ by taking $X_{-1} = X_{-2} =
\dots = 0$. 
In this case, we recall the definition 
of the spectral sequence associated to the tower. 
Let $F_i$ be the fiber of $X_i \to X_{i-1}$. 
One has an exact couple
\[ \xymatrix{
\bigoplus_i \pi_* X_i \ar[rr]^{\phi}  & &  \bigoplus_i \pi_* X_{i-1} \ar[ld]  \\
& \bigoplus_i \pi_* F_i \ar[lu]
},
\]
where $\phi$ is obtained as the direct sums of the maps $\pi_* X_{i} \to
\pi_* X_{i-1}$. 
The spectral sequence for the homotopy groups of $\varprojlim X_i$ is obtained by repeatedly deriving this exact couple. 
In particular, the successive derived  couples are of the form 
\[ 
\xymatrix{
\im \phi^{r-1}  \ar[rr]^{\phi}&&  \im \phi^{r-1} \ar[ld] \\
 & E_r^{\ast, \ast} \ar[lu]
},
\]
so that if $E_r^{\ast, \ast}$ is identically zero for some $r$, then 
$\phi$ necessarily (by exactness) induces an automorphism of $\im
\phi^{r-1}$. 
In other words, if we consider the inverse system 
$\{\pi_* X_i\}_{i \in \mathbb{Z}}$, it follows that the $r$th derived system is
constant, and thus necessarily zero, by considering indices below zero. 

For the converse, if $\left\{X_n\right\} \in
\townil_{\left\{S^0\right\}}(\sp)$, say all $r$-fold composites in the
inverse system are zero, then reversing the above argument shows that the exact
couple degenerates to zero at the $r+1$st stage: the $\im \phi^{r}$ terms are
zero. 

The case of a general $(\mathcal{C}, \mathfrak{U})$ now easily reduces to this,
because (as in the above argument), the point at which the spectral sequence
collapses and the number of successive composites needed to make all the maps
nullhomotopic are functions of each other. 

\end{proof}

Similarly, we can give a criterion for belonging to $\towg_{\mathfrak{U}}(
\mathcal{C})$.
First, however, we need to prove some lemmas about pro-systems of abelian groups.

\begin{lemma} 
\label{halfexact}
Consider a half-exact sequence of inverse systems
of abelian groups
\[  \left\{X_i \right\}  \to \left\{Y_i\right\} \to \left\{Z_i\right\}
. \]
Then:
\begin{enumerate}
\item  If $\left\{Y_i\right\}, \left\{Z_i\right\}$ have eventually constant
$r$th derived systems and $0 \to \left\{X_i\right\} \to \left\{Y_i\right\}$ is
exact, then $\left\{X_i\right\}$ has an eventually constant $r$th derived
system. 
\item If $\left\{X_i\right\}, \left\{Y_i\right\}$ have eventually constant $r$th
derived systems and $\left\{Y_i\right\} \to \left\{Z_i\right\} \to 0$ is exact, 
then $\left\{Z_i\right\}$ has an eventually constant $r$th derived system.  
\item 
If 
$$0 \to  \left\{X_i \right\}  \to \left\{Y_i\right\} \to \left\{Z_i\right\} \to
0,$$
is exact, and $\left\{X_i\right\}$ and $\left\{Z_i\right\}$ have eventually
constant $r$th derived systems, then $\left\{Y_i\right\}$ has an eventually
constant $2r$th derived system. 

\end{enumerate}

\end{lemma}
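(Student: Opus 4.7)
The plan is to first reformulate ``$D_r A$ is eventually constant'' as two concrete conditions on the inverse system $\{A_i\}$: for $n$ sufficiently large, (i) $\im(A_{n+r+1} \to A_n) = \im(A_{n+r} \to A_n)$ inside $A_n$, and (ii) $\ker(A_{n+r+1} \to A_n) = \ker(A_{n+r+1} \to A_{n+1})$ inside $A_{n+r+1}$. A short induction strengthens these to $\im(A_m \to A_n) = \im(A_{n+r} \to A_n)$ for all $m \geq n+r$, and $\ker(A_m \to A_{m-r-k}) = \ker(A_m \to A_{m-r})$ for all $k \geq 0$. Each part of the lemma then reduces to a diagram chase using these reformulations together with the half-exactness; throughout, write $\alpha \colon X \to Y$ and $\psi \colon Y \to Z$ for the structure maps.

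For Part (1), identify $X_n$ with $\ker(Y_n \to Z_n)$. Condition (ii) for $X$ is immediate by intersecting (ii) for $Y$ with $X_{n+r+1}$. For (i), take $\xi \in \im(X_{n+r} \to X_n)$; iterated (i) for $Y$ lets us write $\xi = \phi_Y^{r+m}(\tilde\mu)$ for $\tilde\mu \in Y_{n+r+m}$ and any $m \geq 1$. Since $\psi(\xi) = 0$, we have $\psi(\tilde\mu) \in \ker(\phi_Z^{r+m} \colon Z_{n+r+m} \to Z_n)$, and iterated (ii) for $Z$ forces $\phi_Z^r(\psi(\tilde\mu)) = 0$ in $Z_{n+m}$; hence $\phi_Y^r(\tilde\mu) \in \alpha(X_{n+m})$, and $\xi = \phi_Y^m(\phi_Y^r(\tilde\mu))$ lies in $\im(X_{n+m} \to X_n)$. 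Part (2) is handled dually: (i) for $Z$ follows at once from (i) for $Y$ and surjectivity of $\psi$. For (ii) of $Z$, given $\zeta \in \ker(\phi_Z^{r+1})$, lift to $y \in Y_{n+r+1}$ with $\phi_Y^{r+1}(y) = \alpha(x)$, $x \in X_n$; one shows the connecting residue $[x] \in X_n/\im(X_{n+r+1} \to X_n)$ vanishes by a parallel chase using iterated (i), (ii) for $Y$ together with iterated (i) for $X$, which allows modifying $y$ by $\alpha(X_{n+r+1})$ to make $\phi_Y^{r+1}(y) = 0$, after which (ii) for $Y$ delivers $\phi_Y^r(y) = 0$, i.e., $\phi_Z^r(\zeta) = 0$.

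For Part (3), the shift from $r$ to $2r$ arises from composing two $r$-step corrections. For (i) at $2r$: given $\xi = \phi_Y^{2r}(\tilde\mu)$, iterated (i) for $Z$ yields $\psi(\xi) = \phi_Z^{2r+1}(\nu)$, and surjectivity lifts $\nu = \psi(\tilde\nu)$, so $\eta := \phi_Y^{2r+1}(\tilde\nu) \in \im(Y_{n+2r+1} \to Y_n)$ differs from $\xi$ by $\alpha(x')$ for some $x' \in X_n$. Setting $u := \tilde\mu - \phi_Y^1(\tilde\nu) \in Y_{n+2r}$ gives $\phi_Y^{2r}(u) = \alpha(x')$, and iterated (ii) for $Z$ places $\phi_Y^r(u) \in \alpha(X_{n+r})$, whence $x' \in \im(X_{n+r} \to X_n)$; iterated (i) for $X$ upgrades this to $\im(X_{n+2r+1} \to X_n)$, so $\xi = \eta + \alpha(x') \in \im(Y_{n+2r+1} \to Y_n)$. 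For (ii) of $Y$ at $2r$: given $y \in \ker(\phi_Y^{2r+1})$, iterated (ii) for $Z$ gives $\phi_Y^r(y) = \alpha(\mu)$ with $\mu \in X_{n+r+1}$, and $\alpha \circ \phi_X^{r+1}(\mu) = \phi_Y^{2r+1}(y) = 0$ forces $\mu \in \ker(\phi_X^{r+1})$; (ii) for $X$ then yields $\phi_X^r(\mu) = 0$, so $\phi_Y^{2r}(y) = \alpha(\phi_X^r(\mu)) = 0$.

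The main technical obstacle is Part (2)'s condition (ii). In contrast to Part (1), where one has freedom to lift $\xi$ through $Y$ and the quotient $Z$ only plays a passive role, here any modification of $y$ by an element of $\alpha(X_{n+r+1})$ leaves $\psi(y) = \zeta$ unchanged, so the obstruction $[x] \in X_n/\im(X_{n+r+1} \to X_n)$ is intrinsic to $\zeta$ and cannot be adjusted by re-choosing the lift. Forcing $[x] = 0$ requires threading together the EC data for both $X$ and $Y$: iterated (i) for $Y$ produces arbitrarily deep lifts of $\alpha(x)$ through $Y$, iterated (ii) for $Y$ compares those lifts modulo $\ker(\phi_Y^{r+1})$, and iterated (i) for $X$ is used to transfer the resulting image information from $Y$ back to $X$ via the exactness. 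Keeping track of these interlocking pro-isomorphisms is the most delicate bookkeeping in the proof.
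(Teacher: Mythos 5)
Your proposal is correct and follows essentially the same approach as the paper: your split of the eventually-constant condition into the image-stabilization condition (i) and the kernel-stabilization condition (ii) is just a more explicit bookkeeping of the paper's ``permanent element'' language, and the diagram chases for parts (1) and (3) track the paper's step by step. Your sketch of part (2)'s condition (ii) (the paper simply calls this case ``dual'') does contain the right ingredients and can be closed — e.g., writing $\bar x = \phi_X^r(x)$, lifting $\bar x$ to $\hat x \in X_{n+r}$ by iterated (i) for $X$, comparing $\alpha(\hat x)$ with $\phi_Y(y)$ via iterated (ii) for $Y$, and using (i) for $X$ once more to land $\phi_X^r(\hat x)$ in $\im(X_{n+r+1}\to X_n)$ — though as written you assert rather than carry out this chase, which is roughly the same level of detail the paper gives.
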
 

\begin{proof}
We will prove the first and third of these assertions. The second is dual to
the first and can be proved similarly (or via an ``opposite category''
argument, since everything here works in an arbitrary abelian category). 
The three assertions state that the subcategory of the category of towers of
abelian groups consisting of those towers with an eventually constant $r$th
derived system for $r \gg 0$ is closed under 
finite limits and colimits, and extensions too. 

\begin{enumerate}[leftmargin=*] 
\item  Suppose $\left\{Y_i\right\}, \left\{Z_i\right\}$ have eventually
constant $r$th derived systems and $\left\{X_i\right\}$ is the kernel inverse
system of $\left\{Y_i\right\} \to \left\{Z_i\right\}$. 
In this case, we want to show that the $r$th derived system of $\mathrm{ker}( Y_i \to Z_i)$ 
is eventually constant. 

For $i \gg 0$, it follows that
\[ \phi\colon \mathrm{Im}( \phi^{r} \colon X_i \to X_{i-r}) \to \mathrm{Im}(
\phi^{r} \colon X_{i-1} \to X_{i-r-1}), \]
is injective, because the analog is true for $\left\{Y_i\right\}$. 

The harder step is to show that the map is surjective. Equivalently, for $j
\gg 0$, we must show that  any element of
$X_{j}$ that can be lifted up to $X_{j + r}$ can also be lifted up to
$X_{j+r+1}$ (not necessarily in a compatible manner). 
Fix $x_j \in X_j$ admitting a lift $x_{j+r} \in X_{j+r}$. Then the image
$y_{j+r}$ of $x_{j+r}$ in $Y_{j+r}$ lifts the image $y_j$ of $x_j$ in $Y_j$. It
follows that $y_j$, since it lifts $r$ times, is actually the image of an
element $y'_{j+r +1} \in Y_{j+r+1}$ which is ``permanent,'' i.e., which lifts
arbitrarily. The image $z'_{j+r+1} \in Z_{j+r}$ of $y'_{j+r+1}$ maps to zero in
$Z_{j}$, but $z'_{j+r+1}$ is also ``permanent,'' so it must itself vanish by
assumption. Thus $y'_{j+r+1} $ is the image of $x'_{j+r+1} \in X_{j+r+1}$ and
this lifts $x_j$.

\item[(3)]  Suppose $\left\{X_i\right\}, \left\{Z_i\right\}$ have eventually 
constant $r$th derived systems. Suppose $j \gg 0$ and suppose $y_j \in Y_j$ is in
the image of $Y_{j+2r}$. We need to show two things:
\begin{itemize}
\item  If $y_j \neq 0$, then the image of $y_j$ in $Y_{j-1}$ is not zero. 
\item $y_j$ can be lifted to $Y_{j+2r+1}$. 
\end{itemize}

For the first item, if the image $z_j$ of $y_j$ in $Z_{j}$ is not zero, then
the image $z_{j-1}$ of $z_j$ in $Z_{j-1}$ is also nonzero, because $z_j$ is
permanent. If $z_j = 0$, then $y_j$ comes from $X_j$, and we can apply the
same argument to $X_j$. 

Namely, choose $y_{j+2r}
\in Y_{j+2r}$ lifting $y_j$ and let $y_{j+r} \in Y_{j+r}$ be the image. The
image $z_{j+r} \in Z_{j+r}$ is a permanent element which, by hypothesis,
projects to zero in $Z_j$, so must be zero. In particular, $y_{j+r} \in
Y_{j+r}$ comes from $X_{j+r}$, which means that $y_j$ not only comes from $X_j$
but also that it is the image of a (nonzero) permanent element in $X_j$. The image of
this in $X_{j-1}$ thus cannot be zero. 
This completes the proof of the first item.

For the second item, the image $z_j \in Z_j$ of $y_j$ is a permanent element,
so it lifts uniquely to a permanent element $z_{j+2r+1}$.  Choose a lift
$y'_{j+2r+1}
\in Y_{j+2r+1}$ of $z_{j+2r+1}$. 
Let $y'_{j+2r}$ be the image of $y'_{j+2r+1}$ in $Y_{j+2r}$. Then $y_{j+2r}
-y'_{j+2r}$ maps to an element in $Z_{j+2r}$ which maps to zero in $Z_j$, and
thus maps to zero in $Z_{j+r}$. In particular, $y_{j+r} - y'_{j+r}$ belongs
to $X_{j+r}$; call it $\overline{x}_{j+r}$. Taking images in $Y_j$, we get
\[ y_j = \overline{x}_j + y'_j,  \]
where $\overline{x}_j$ is the image of $\overline{x}_{j+r}$ and $y'_j$ is the
image of $y'_{j+2r+1}$. But $\overline{x}_j$ is permanent, and $y'_j$ is the
image of something in $Y_{j+2r+1}$, so this completes the proof. 
\end{enumerate}

\end{proof}

\begin{lemma} 
\label{profivelem}
Let $\left\{A_i\right\}, \left\{B_i\right\}, \left\{C_i\right\}$ be three
pro-systems of graded abelian groups. 
Suppose that there is a long exact sequence of pro-systems
\[ \xymatrix{
\{A_i\} \ar[rr] & & \left\{B_i\right\} \ar[ld] \\
 & \left\{C_i\right\} \ar[lu]
},\]
where the map $\left\{C_i\right\} \to \left\{A_i\right\}$ lowers grading by $1$. 
Suppose the $r$th derived system of $\left\{A_i\right\}$ and
$\left\{B_i\right\}$ are eventually constant. Then the $2r$th derived system of
$\left\{C_i\right\}$ is eventually constant. 
\end{lemma}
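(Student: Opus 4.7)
The plan is to reduce Lemma \ref{profivelem} to three applications of the previous lemma (\Cref{halfexact}) by breaking the long exact sequence of pro-systems into two short exact sequences. Set $\beta_i = \mathrm{im}(B_i \to C_i) = \ker(C_i \to A_i[-1])$ and $\gamma_i = \mathrm{im}(C_i \to A_i[-1]) = \ker(A_i[-1] \to B_i[-1])$. Then we have a short exact sequence of pro-systems
\[ 0 \to \{\beta_i\} \to \{C_i\} \to \{\gamma_i\} \to 0, \]
so once I know that both $\{\beta_i\}$ and $\{\gamma_i\}$ have eventually constant $r$th derived systems, part (3) of \Cref{halfexact} will give that $\{C_i\}$ has an eventually constant $2r$th derived system, which is exactly the conclusion.

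First I would handle $\{\beta_i\}$. By definition, $\{\beta_i\}$ fits into a half-exact sequence of pro-systems $\{A_i\} \to \{B_i\} \to \{\beta_i\} \to 0$, i.e.\ $\{\beta_i\}$ is presented as a quotient of $\{B_i\}$. Since $\{A_i\}$ and $\{B_i\}$ both have eventually constant $r$th derived systems by hypothesis, part (2) of \Cref{halfexact} yields the same property for $\{\beta_i\}$. Next I would handle $\{\gamma_i\}$: by construction it sits in the half-exact sequence $0 \to \{\gamma_i\} \to \{A_i[-1]\} \to \{B_i[-1]\}$, exhibiting $\{\gamma_i\}$ as a subobject of $\{A_i[-1]\}$. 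The hypothesis on $\{A_i\}$ and $\{B_i\}$ is preserved under the degree shift by $-1$, so part (1) of \Cref{halfexact} applies and gives that $\{\gamma_i\}$ also has an eventually constant $r$th derived system.

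Finally, feeding both conclusions into part (3) of \Cref{halfexact} applied to the short exact sequence $0 \to \{\beta_i\} \to \{C_i\} \to \{\gamma_i\} \to 0$ produces the desired control on the $2r$th derived system of $\{C_i\}$. The argument is essentially a bookkeeping exercise: there is no real obstacle beyond keeping track of which half-exact sequence gives which quotient or subobject, and verifying that the hypotheses of the three parts of \Cref{halfexact} match up in the correct places. The only mildly subtle point is that part (3) necessarily introduces the factor of $2$ in the derived-system index, which is why the conclusion involves the $2r$th derived system rather than the $r$th.
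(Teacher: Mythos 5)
Your proof is correct and matches the paper's own argument: the paper states a slightly more general claim (a 5-term exact sequence $\{M_i\}\to\{N_i\}\to\{P_i\}\to\{Q_i\}\to\{R_i\}$ with the outer four having eventually constant $r$th derived systems) and notes it "follows by applying \Cref{halfexact} three times," which, when unwound for the rotated long exact sequence, is exactly your decomposition via $\{\beta_i\}=\ker(C_i\to A_i[-1])$ and $\{\gamma_i\}=\ker(A_i[-1]\to B_i[-1])$ and applications of parts (2), (1), and (3) in that order.
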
 
\begin{proof} 
More generally, suppose given an exact sequence of inverse systems of abelian
groups
\[ \left\{M_i\right\} \to \left\{N_i\right\}  \to \left\{P_i\right\} \to
\left\{Q_i\right\} \to \left\{R_i\right\}, \]
and suppose that the $r$th derived systems of the inverse systems
$\left\{M_i\right\}, \left\{N_i\right\}, \left\{Q_i\right\},
\left\{R_i\right\}$ are eventually constant. Then we claim that the $2r$th 
derived system of $\left\{P_i\right\}$ is eventually  constant. 
In fact, this follows by applying \Cref{halfexact} three times, and implies the present result.

\end{proof}

\begin{proposition} \label{veryproconst}
The tower $\dots \to X_n \to X_{n-1} \to \dots $ in $\mathcal{C}$ belongs to
$\towg_{\mathfrak{U}}(\mathcal{C})$ if and only if, for each $U \in
\mathfrak{U}$, the spectral sequence associated to the tower of spectra
\[ \dots \to \hom(U, X_n) \to \hom(U, X_{n-1}) \to \dots \to \hom(U, X_0),  \]
collapses at a finite stage  with a horizontal vanishing line independent of
$U$. In other words, there should exist $r, N$ such that $E_r^{s, \ast}  = 0$
for $s > N$, in the spectral sequence associated to the above tower for each
$U$.
\end{proposition}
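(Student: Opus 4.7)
The plan is to split the equivalence into its two directions and, in each, to reduce to the algebraic lemmas on inverse systems of abelian groups already established, via the dictionary (extracted from the proof of \Cref{veryprozero}) between the vanishing of $E_r^{s,*}$ for $s > N$ in the tower spectral sequence for $\{\hom(U, X_n)\}$ and the appropriate derived system of the inverse system $\{[U, X_n]_*\}$ being eventually constant from filtration $N$ onwards. The case $N = 0$ (where ``eventually constant'' degenerates to ``eventually zero'') is precisely the content of \Cref{veryprozero}, and the general case follows by the same exact couple manipulation, only now not extending the tower artificially by zero below.

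For the forward direction, suppose $\{X_n\} \in \towg_{\mathfrak{U}}(\mathcal{C})$, so by definition there is a cofiber sequence of towers $\{L\} \to \{X_n\} \to \{C_n\}$ with $\{L\}$ constant at $L = \varprojlim X_n$ and $\{C_n\} \in \townil_{\mathfrak{U}}(\mathcal{C})$. Applying $\hom(U,-)$ and passing to homotopy groups yields a triangle of inverse systems. The corner $\{[U, L]_*\}$ trivially has eventually constant $r$th derived system for every $r$, while $\{[U, C_n]_*\}$ has eventually zero $r$th derived system for some $r$ independent of $U$ by \Cref{veryprozero}. Rotating the triangle and applying \Cref{profivelem} produces an eventually constant $2r$th derived system for $\{[U, X_n]_*\}$, which via the dictionary translates into the required horizontal vanishing line on a finite page, uniformly in $U$.

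For the backward direction, assume the spectral sequence hypothesis and set $L := \varprojlim X_n$ and $C_n := \mathrm{cofib}(L \to X_n)$. Since $\varprojlim$ preserves fiber sequences in the stable $\infty$-category $\mathcal{C}$, we have $\varprojlim_n C_n \simeq 0$; in particular $\hom(U, \varprojlim_n C_n) \simeq 0$, and the Milnor short exact sequence gives $\varprojlim_n [U, C_n]_* = 0$ for each $U$. The hypothesis says $\{[U, X_n]_*\}$ has eventually constant $r$th derived system uniformly in $U$, and the constant system $\{[U, L]_*\}$ trivially does, so \Cref{profivelem} applied directly to the triangle $\{[U, L]_*\} \to \{[U, X_n]_*\} \to \{[U, C_n]_*\}$ gives that $\{[U, C_n]_*\}$ has eventually constant derived system of some uniform order $r'$.

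The proof is then completed by an elementary observation: an inverse system $\{A_n\}$ of abelian groups whose $r$th derived system is eventually constant and whose inverse limit vanishes must in fact have eventually \emph{zero} $r$th derived system. Indeed, the eventually constant value of the derived system is a pro-constant subsystem that embeds into $\varprojlim A_n$, so vanishes when the latter does. Applying this to $\{[U, C_n]_*\}$ in each internal degree and each $U \in \mathfrak{U}$, all sufficiently long composites in $\{C_n\}$ become null on $[U,-]_*$; shifting the tower upgrades this to a bound on all composites of a single fixed length. Thus $\{C_n\} \in \townil_{\mathfrak{U}}(\mathcal{C})$ and $\{X_n\} \in \towg_{\mathfrak{U}}(\mathcal{C})$. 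The main obstacle is bookkeeping: carefully tracking the correspondence between derived-system stability and spectral sequence vanishing, and preserving uniformity over $U \in \mathfrak{U}$ through the algebraic manipulations, both of which should be routine given the results already in hand.
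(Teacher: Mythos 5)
Your proposal is correct and follows essentially the same route as the paper: both use the dictionary between horizontal vanishing lines and eventually-constant derived systems, both pass to the cofiber tower $\{\mathrm{cofib}(\varprojlim X_j \to X_i)\}$ via \Cref{profivelem}, and both invoke \Cref{veryprozero} for the pro-zero case. The one place you are more explicit than the paper is in justifying why the eventually-constant value of the derived system for $\{[U,C_n]_*\}$ must vanish when $\varprojlim C_n \simeq 0$; the paper compresses this into the remark that ``the stable value of the pro-system must be $\pi_* \varprojlim X_i$,'' but your argument (the stable value embeds in the inverse limit) is the correct elaboration of that remark.
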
 
\begin{proof} 
Without loss of generality, suppose that $\mathfrak{U} = \left\{S^0\right\}$ and $\mathcal{C} = \sp$. 
Analysis with exact couples as in the proof of \Cref{veryprozero}
shows that the spectral sequence condition of the proposition holds if and only if there exists
$r, N$ such that the map
\[ \phi\colon \left( \im \phi^r\colon \pi_*( X_{s} )\to\pi_*( X_{s-r})\right)
\to \left( \im \phi^r\colon \pi_*( X_{s-1}) \to \pi_*(X_{s-r-1})\right)
\]
is an isomorphism, for all $s > N$. 
In other words, the $r$th derived system of the pro-system $\left\{\pi_*
X_n\right\}$ is \emph{eventually constant} (rather than constant at zero as in
the proof of \Cref{veryprozero}). 
Necessarily, the stable value of the pro-system must be $\pi_* \varprojlim X_i$.

 By \Cref{profivelem}, it follows that if the condition of
the proposition holds for the tower $\left\{X_i\right\}_{i \in
\mathbb{Z}_{\geq 0}}$, then it also holds
for the tower $\left\{\mathrm{cofib}( \varprojlim X_j\to X_i)\right\}_{i \in
\mathbb{Z}_{\geq 0}}$, since
it clearly holds for the constant tower with value $\varprojlim X_j$, and
conversely. It follows that we can reduce to the case where $\varprojlim X_j$
is contractible, which is precisely \Cref{veryprozero}.

\end{proof}

Observe that any object in $\towg_{\mathcal{C}}(\mathcal{C})$ defines a tower
yielding a 
\emph{constant} pro-object: this follows from the analogous assertion about
$\townil_{\mathcal{C}}(\mathcal{C})$. 

\begin{remark} Suppose $\mathfrak{U} = \mathcal{C}$ and $\mathcal{C}$ is
presentable. 
Then if $T$ is any spectrum and $\left\{X_n\right\} \in 
\towg_{\mathcal{C}}( \mathcal{C})$, the tower $\left\{T \wedge X_n\right\}$
also belongs to $\towg_{\mathcal{C}}( \mathcal{C})$, where we recall that
$\mathcal{C}$ is tensored over $\sp$. This is a consequence of
the fact that $\left\{X_n\right\} $ defines 
a constant pro-object in $\mathcal{C}$, so that
the natural map
\[ T \wedge \varprojlim X_i \to \varprojlim (T \wedge X_i ) , \]
is an equivalence. 
\end{remark}

\subsection{Vanishing in the descent spectral sequence}
Our goal is to show that  given $(\mathfrak{X}, \otop)$, then for any
$\mathcal{F} \in\qcoh ( \mathfrak{X})$, the $\mathrm{Tot}$ tower associated to
the given cosimplicial spectrum that ``computes'' $\Gamma( \mathfrak{X},
\mathcal{F})$ (i.e., based on a choice of affine, \'etale hypercovering of
$X$) has a
horizontal vanishing line in the homotopy spectral sequence. 

Let $\mathfrak{X} = (X, \otop)$ be an even periodic refinement of a
Deligne-Mumford stack $X$ equipped with a flat map $X \to M_{FG}$. 

\begin{theorem} \label{vanishingline} Suppose $X \to M_{FG}$ is
tame.\footnote{This is equivalent to the condition  that for
every geometric point $\spec k \to X$, the kernel of the automorphism group to the
automorphism group of the associated formal group has order prime to the
characteristic of $k$. We refer to \cite{AOV} for the general theory. In
\cite[Prop. 4.10]{MM}, it is shown that this hypothesis implies
that the global sections functor on the derived stack $\mathfrak{X}$ commutes
with filtered colimits. For example, representable maps are tame.}
There exists $s, N \in \mathbb{Z}_{>0}$ such that for any quasi-coherent sheaf
$\mathcal{F}$ on $\mathfrak{X}$, the descent spectral sequence for $\pi_*
\Gamma( \mathfrak{X}, \mathcal{F})$ has a horizontal vanishing line of height
 $N$ at the $s$th page.
 \end{theorem}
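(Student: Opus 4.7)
The plan is to show that the descent $\mathrm{Tot}$ tower computing $\Gamma(\mathfrak{X}, \mathcal{F})$ lies in $\towg_{\mathfrak{U}}(\qcoh(\mathfrak{X}))$ for $\mathfrak{U}$ a set of compact generators, and that the relevant bounds are uniform in $\mathcal{F}$. Then \Cref{veryproconst} produces the desired horizontal vanishing line at a finite page of the spectral sequence, with height independent of $\mathcal{F}$.

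First I would fix an affine \'etale cover $\spec R \to X$ and let $R^\bullet$ denote the cosimplicial $E_\infty$-ring obtained by applying $\otop$ to the \v{C}ech nerve. For any $\mathcal{F} \in \qcoh(\mathfrak{X})$ one has $\Gamma(\mathfrak{X}, \mathcal{F}) \simeq \mathrm{Tot}\,\mathcal{F}(R^\bullet)$, and the descent spectral sequence is precisely the Bousfield-Kan spectral sequence of this $\mathrm{Tot}$ tower. The $\mathcal{F}$-dependence is mild: the cosimplicial object $\mathcal{F}(R^\bullet)$ is obtained from the universal one (the case $\mathcal{F} = \otop$) by tensoring with $\mathcal{F}$ in $\qcoh(\mathfrak{X})$. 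By the remark following \Cref{veryproconst}, $\towg_{\mathcal{C}}(\mathcal{C})$ is preserved under tensoring with arbitrary objects in a presentable stable $\infty$-category, so it suffices to prove the vanishing line for the universal tower. Equivalently, we must show that the augmentation $A := \Gamma(\mathfrak{X}, \otop) \to R$ is \emph{descendable}, in the sense that its Amitsur $\mathrm{Tot}$ tower lies in $\townil_{\mathfrak{U}}$ for $\mathfrak{U}$ compact generators.

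The main obstacle is establishing this uniform descendability, and it is here that the tameness hypothesis is essential. By \cite{AOV}, a tame flat map $X \to M_{FG}$ is \'etale-locally of the form ``finite flat cover of its image,'' so the descent problem reduces, \'etale-locally on the target, to a finite Galois-like situation at the $E_\infty$-level. One can then apply the nilpotence technology of Section 2 to an Euler or Bott-type class that measures the failure of the Amitsur complex to split, showing that it is nilpotent with a uniformly bounded nilpotence degree by detecting nilpotence through the residue-field homology theories $K(\mathfrak{p})$. The thickness of $\townil$ established in \Cref{hpsprop} then assembles these local statements into the required uniform global bound, and the closure of $\towg$ under tensoring transports the bound from $\mathcal{F} = \otop$ to arbitrary $\mathcal{F}$. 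The hard part is matching the combinatorics of the Amitsur tower to this nilpotence data while ensuring that the number of composites required, and hence the page $s$ and height $N$, depend only on $X \to M_{FG}$ and not on the test object $\mathcal{F}$.
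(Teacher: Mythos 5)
Your opening move---identify the descent spectral sequence with the Bousfield--Kan spectral sequence of the $\mathrm{Tot}$ tower, recognize that the statement is exactly membership in $\towg_{\mathfrak{U}}$, and try to control the uniformity in $\mathcal{F}$---matches the shape of what the paper does. Your uniformity reduction is via tensoring (the universal tower with a general $\mathcal{F}$), whereas the paper's reduction to a fixed $\mathcal{F}$ is via a wedge argument; both are plausible ways to handle the quantifier. But there is a genuine gap in the middle of your argument, precisely at the step where you must actually \emph{prove} that the universal tower is in $\towg$ (equivalently, that $\Gamma(\mathfrak{X},\otop) \to \otop(\spec R)$ is descendable with a bound).

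You assert that tameness of $X \to M_{FG}$ makes the \'etale cover ``a finite flat cover of its image,'' reducing descent to a ``finite Galois-like situation,'' and that one can then apply the $K(\mathfrak{p})$-residue-field nilpotence technology from \Cref{sec:affinecase} to an Euler/Bott-type class. Neither step works as stated. Tameness is a condition on the kernels of the maps on automorphism groups at geometric points; it does not make $\spec R \to X$ finite, nor does it put the descent in a finite Galois framework. And the residue fields of \Cref{k(p)} are built for perfect modules over an even periodic $E_\infty$-ring with \emph{regular noetherian} $\pi_0$; the ring $\Gamma(\mathfrak{X},\otop)$ has complicated, non-regular homotopy (indeed the whole point of the section is that one cannot study its modules directly), so that machinery is not available at the global level. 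What you are missing is the actual engine of the paper's proof: a reduction to bad primes $p$ after inverting the l.c.m.\ $M$ of orders of automorphism groups (where $X[M^{-1}]$ is already tame with bounded cohomology), the observation that $p$-locally the relevant spectra are $L_n$-local, and the Hopkins--Ravenel smash product theorem to reduce, within the thick subcategory $\mathcal{C}_{\mathcal{F}} \subset L_n\sp$, to the case $T = E_n$, where the tameness of $X \times_{M_{FG}} \spec \pi_0 E_n$ gives a horizontal vanishing line at $E_2$. Without some replacement for the smash product theorem step, the proposal does not close.
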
 
\begin{proof} 
Note that it suffices to \emph{fix} $\mathcal{F}
$ and then prove the result for $\mathcal{F}$ specifically: if $s$ and $N$ could not be taken independently of
$\mathcal{F}$, taking appropriate wedges would provide a counterexample. 

Let $M$ be the least common multiple of the orders of all the automorphism
group schemes of geometric points of $X$.
Then, the stack $X[M^{-1}]$ is  itself tame, and so (as in \cite[Prop.
2.24]{MM}) has
bounded cohomology, and thus the $E_2$-page of the descent spectral sequence
has a horizontal vanishing line at $E_2$ itself after inverting some $M$.

We now work $p$-locally for a fixed ``bad'' prime $p$. 
In this case, all the spectra $\otop( \spec R)$ for \'etale maps $\spec R \to
X$ are $L_n$-local for some $n$; see the discussion at the beginning of
\cite[\S 4.2]{MM} and in particular \cite[Lemma 4.9]{MM}. 
The collection $\mathcal{C}_{\mathcal{F}} \subset \sp$ of spectra $T$ such that the
$\mathrm{Tot}$ tower for 
$\Gamma( \mathfrak{X} , \mathcal{F} \wedge T)$ belongs to $\towg_{\Gamma(
\mathfrak{X}, \otop)}( \md(\Gamma(
\mathfrak{X}, \otop))$ is a thick subcategory. 
Any thick subcategory of $L_n \sp$ that contains the smash powers of $E_n$
contains $L_n S^0$ by the Hopkins-Ravenel smash product theorem. It thus
suffices to show that the smash powers of $E_n$  belong to
$\mathcal{C}_{\mathcal{F}}$. Since $\mathcal{F}$ was arbitrary, we need to show
that the $\mathrm{Tot}$ tower for 
$\Gamma( \mathfrak{X} , \mathcal{F} \wedge E_n)$ belongs to 
$\towg_{\Gamma(
\mathfrak{X}, \otop)}( \md(\Gamma(
\mathfrak{X}, \otop)))$. 

However, as in \cite[Prop. 4.10]{MM}, since the stack $X \times_{M_{FG}} \spec \pi_0 E_n$
is tame, the descent spectral sequence
$\Gamma( \mathfrak{X} , \mathcal{F} \wedge E_n)$ already has a horizontal
vanishing line at $E_2$ (and therefore degenerates at a finite stage), because
the fiber product $X \times_{M_{FG}} \spec \pi_0 E_n$ is a (quasi-compact,
separated) \emph{tame} stack.  It follows that the statement of the proposition holds $p$-locally. 
\end{proof} 

\begin{remark}
The descent spectral sequence for $\pi_* \Gamma( \mathfrak{X}, \mathcal{F})$
will generally be very infinite at the $E_2$-page because of ``stackiness.'' The descent spectral sequence for
$KO$-theory, which is also the homotopy
fixed point spectral sequence for $KO \simeq KU^{h \mathbb{Z}/2}$, is displayed in \Cref{fig:KOss}. The class 
$\eta$ is not nilpotent in the $E_2$-page, until a $d_3$ kills
$\eta^3$. 
The same phenomenon occurs in the $\TMF$ spectral sequence, which is computed
in \cite{bauer} and \cite{konter}. It is a finiteness property of the
$E_n$-local stable homotopy category: the $E_n$-local Adams-Novikov spectral
sequence exhibits the same property \cite{HoveyS}. 
\end{remark}

\begin{figure}
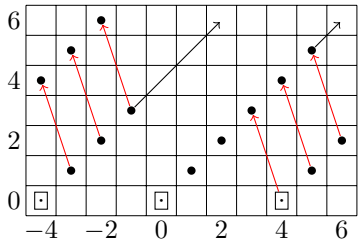

\begin{center}
\begin{sseq}{-4...6}{0...6}
\ssmoveto 0 0 
\ssdrop[boxed]{\cdot}
\ssmoveto 1 1 
\ssdropbull 
\ssmoveto 2 2 

\ssdropbull 
\ssmoveto 3 3 
\ssdropbull 
\ssmoveto 4 4 
\ssdropbull 
\ssmoveto 5 5 
\ssdropbull 

\ssmoveto 6 6 
\ssdrop{}
\ssstroke[arrowto]

\ssmoveto 4 0 
\ssdrop[boxed]{\cdot}
\ssarrow[color=red]{-1}{3}

\ssmoveto 5 1 
\ssdropbull 

\ssarrow[color=red]{-1}{3}
\ssmoveto 6 2 
\ssdropbull 

\ssarrow[color=red]{-1}{3}

\ssmoveto{-4}{0} 

\ssdrop[boxed]{\cdot}

\ssmove 1 1

\ssdropbull 

\ssmove 1 1
 
\ssdropbull

\ssmove 1 1
\ssdropbull

\ssmoveto 0 4

\ssmoveto{-4}{0} \ssmoveto{2}{6}
\ssdrop{}
\ssstroke[arrowto]

\ssmoveto {-4} 4 
\ssdropbull
\ssmove 1 1
\ssdropbull
\ssmove 1 1
\ssdropbull

\ssmoveto {-3} 1
\ssarrow[color=red]{-1}{3}

\ssmoveto {-2} 2
\ssarrow[color=red]{-1}{3}

\ssmoveto {-1} 3
\ssarrow[color=red]{-1}{3}

\end{sseq}
\end{center}
\caption{The descent (or homotopy fixed point) spectral sequence for $KO \simeq K^{h
\mathbb{Z}/2}$. Arrows in red indicate differentials, while arrows in black
indicate recurring patterns. Dots indicate copies of $\mathbb{Z}/2$, while
squares indicate copies of $\mathbb{Z}$.}
\label{fig:KOss}
\end{figure}

\section{Extension to stacks}

Fix a  \emph{regular} Deligne-Mumford stack $X$. 
Let $X \to M_{FG}$ be a flat morphism
and consider an even periodic refinement $\mathfrak{X} = ({X}, \otop)$ of this data. 
We have considered the $\infty$-category $\qcoh( \mathfrak{X})$ of
quasi-coherent sheaves on $\mathfrak{X}$, which has good finiteness
properties if $X \to M_{FG}$ is tame, as explored in \cite[\S 4]{MM}. 

In this section, we will define thick subcategories of $\qcoh( \mathfrak{X})$
associated to every closed substack of $X$ and prove one half of the thick
subcategory theorem. We will also illustrate how the other, more difficult, half can be deduced in
the special case of a quotient stack by a finite group.

\subsection{Definitions}

\newtheorem*{hyp}{Hypotheses}
We keep the previous notation and use the following hypotheses. 

\begin{hyp}
\begin{enumerate}
\item $X$ is a regular, separated, noetherian Deligne-Mumford stack.
\item 
$\mathfrak{X} = (X, \otop)$ is an even periodic refinement of a flat map $X \to
M_{FG}$.
\item The derived stack $\mathfrak{X}$ has the property that the global
sections functor establishes an equivalence $\md( \Gamma( \mathfrak{X}, \otop))
\simeq \mathrm{QCoh}( \mathfrak{X})$.
The main result of \cite{MM} implies that this holds if $X \to M_{FG}$ is
quasi-affine. 
\end{enumerate}
\end{hyp}
\begin{definition} 
Let $\qcoh^\omega( \mathfrak{X})$ be the subcategory of quasi-coherent sheaves
$\mathcal{F} \in \qcoh(\mathfrak{X})$, such that for each \'etale map $\spec R
\to X$, the $\otop( \spec R)$-module $\mathcal{F}( \spec R)$ is perfect. 
Thus, $\qcoh^\omega(\mathfrak{X})$ is the $\infty$-category of \emph{dualizable
objects} in $\qcoh( \mathfrak{X})$. 
Given $\mathcal{F} \in \qcoh^\omega( \mathfrak{X})$, the homotopy group sheaves
$\pi_i \mathcal{F}, i \in \mathbb{Z}$ define \emph{coherent} sheaves on $X$. 
\end{definition} 
\begin{remark}
Under taking global sections, $\qcoh^\omega(\mathfrak{X})$ then corresponds 
to the dualizable, or equivalently perfect, $\Gamma(\mathfrak{X},
\otop)$-modules. This follows because dualizability is a local condition (we
refer to \cite[\S 4.2.5]{higheralg} for the theory of duality in
$\infty$-categories), and
since the dualizable objects in a module category are precisely the perfect
modules. 
\end{remark}

Recall that there is a \emph{topological space} associated to $X$ in the sense of
\cite[Ch. 5]{LMB}. 
Fix a closed \emph{subset} $Z \subset X$: equivalently, this is an equivalence
class of closed substacks of $X$, where two closed substacks $Z, Z' \subset X$ are equivalent if
and only if there is a third substack $Z'' \subset X$ containing $Z, Z'$ as a
nilpotent thickening. 

\begin{definition}
Given a coherent sheaf $\mathcal{G}$ on $X$, we say that it is
\emph{supported} on $Z$ if there exists a closed substack of $X$ with $Z$ as
its underlying space on which $\mathcal{G}$ is supported. (This is a condition
on the fibers of $\mathcal{G}$ at field-valued points.) 

If we have a particular closed substack 
$Z \subset X$ in mind (and not simply an equivalence class), we will also say
that $\mathcal{G}$ is \emph{scheme-theoretically supported on $Z$} if
$\mathcal{G}$ is an $\mathcal{O}_Z$-module.
\end{definition}

\begin{definition} 
Let $Z$ be a subset of $X$ closed under specialization.
We define a thick subcategory $\qcoh_Z^\omega(\mathfrak{X}) \subset \qcoh^\omega(
\mathfrak{X})$ consisting of those $\mathcal{F} \in
\qcoh^\omega(\mathfrak{X})$  such that the homotopy group sheaves $\pi_i
\mathcal{F}$ are supported set-theoretically on $Z$. 
We thus get a map from specialization-closed subsets of $X$ to thick
subcategories of $\mo( \Gamma( \mathfrak{X}, \otop))$.
\end{definition} 

Using exact sequences, one sees that $\qcoh_Z^\omega( \mathfrak{X})$ is in fact
thick. Our first goal is to show that every thick subcategory of
$\qcoh^\omega(X)$ is of this form. 
As in \Cref{sec:affinecase}, it suffices to construct a sufficient collection
of ``residue fields.''
To do this, choose an \'etale surjection $\spec R \to X$. 
The $E_\infty$-ring $\otop( \spec R)$ fits into the setting of the previous
subsection, and we can construct homology theories $K( \mathfrak{p})_*$ on
$\md( \otop(\spec R))$ for
each $\mathfrak{p} \in \spec R$. 

Via pull-back, we can define these homology theories on $\qcoh(\mathfrak{X})$. 
In particular, it follows that given $\mathcal{F} \in \qcoh( \mathfrak{X})$, we
can define 
\[ K( \mathfrak{p})_* \mathcal{F} \stackrel{\mathrm{def}}{=} \pi_* (
\mathcal{F}( \spec R)_{\mathfrak{p}}/(x_1, \dots, x_r)),  \]
where $x_1, \dots, x_r \in R_{\mathfrak{p}}$ is a system of parameters. 
Every point of $X$ can be represented by a prime ideal $\mathfrak{p} \in \spec
R$ (the topological space of $X$ is the quotient of $\spec R$ under the
two maps $\spec R \times_X \spec R \rightrightarrows \spec R$), and as in the previous section, it follows that the support of $\mathcal{F}
\in \qcoh^\omega( \mathfrak{X})$ contains the point corresponding to
$\mathfrak{p} \in \spec R$ if and only if $K( \mathfrak{p})_* \mathcal{F} \neq
0$. 

\begin{proposition} 
Given $\mathcal{F} \in \qcoh( \mathfrak{X})$, $\mathcal{F} \simeq 0$ if and
only if $K( \mathfrak{p})_* \mathcal{F} = 0$ for each $\mathfrak{p} \in \spec
R$. 
\end{proposition}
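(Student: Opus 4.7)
The plan is to reduce this to the affine Bousfield decomposition \Cref{bdecmdR} applied to $\otop(\spec R)$, and then propagate the resulting vanishing from $\spec R$ to the rest of $\mathfrak{X}$ by faithfully flat descent along the \'etale cover $\spec R \to X$. One direction is immediate, so we fix $\mathcal{F} \in \qcoh(\mathfrak{X})$ with $K(\mathfrak{p})_* \mathcal{F} = 0$ for every $\mathfrak{p} \in \spec R$ and explain why $\mathcal{F} \simeq 0$.

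First, we observe that since $X$ is regular noetherian and $\spec R \to X$ is \'etale, the ring $R$ is itself regular noetherian; hence the even periodic $E_\infty$-ring $A := \otop(\spec R)$ falls within the setup of \Cref{sec:affinecase}. Set $M := \mathcal{F}(\spec R) \in \md(A)$. By the very definition of $K(\mathfrak{p})_*\mathcal{F}$ via pullback to $\spec R$, the hypothesis translates precisely to $K(\mathfrak{p})_* M = 0$ for every $\mathfrak{p} \in \spec \pi_0 A$. Now we apply \Cref{bdecmdR}, which has no compactness assumption, to the $E_\infty$-ring $A$ and the module $M$ to conclude $M \simeq 0$, i.e., $\mathcal{F}(\spec R) \simeq 0$.

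The remaining step, and the one requiring the most care, is to upgrade vanishing on $\spec R$ to vanishing in $\qcoh(\mathfrak{X})$. Since $\qcoh(\mathfrak{X})$ is the homotopy limit of $\md(\otop(\spec S))$ as $\spec S \to X$ ranges over \'etale affines, it suffices to argue that pullback along the \'etale surjection $\spec R \to X$ is conservative. For any \'etale $\spec S \to X$, the base change $\spec S \times_X \spec R$ is affine and \'etale surjective over $\spec S$; since $\otop$ is even periodic, the induced map $\otop(\spec S) \to \otop(\spec S \times_X \spec R)$ is faithfully flat on homotopy groups. Quasi-coherence identifies $\mathcal{F}(\spec S \times_X \spec R)$ with $\mathcal{F}(\spec R) \wedge_{A} \otop(\spec S \times_X \spec R)$, which vanishes, and by faithful flatness this forces $\mathcal{F}(\spec S) \simeq 0$. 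Since $\spec S \to X$ was arbitrary, $\mathcal{F} \simeq 0$. The main obstacle is really just this descent bookkeeping; the essential content is supplied by \Cref{bdecmdR}.
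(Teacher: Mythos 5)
Your proof is correct and follows essentially the same route as the paper: the paper's one-line argument is ``this is a consequence of \Cref{bdecmdR}, since $\mathcal{F}$ is contractible if and only if $\mathcal{F}(\spec R)$ is,'' and you supply exactly this --- applying \Cref{bdecmdR} to $\mathcal{F}(\spec R) \in \md(\otop(\spec R))$ and then justifying the conservativity of evaluation at $\spec R$ via faithfully flat base change along the \'etale cover. Your write-up just fills in the descent bookkeeping that the paper leaves implicit.
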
 

\begin{proof} 
This is a consequence of \Cref{bdecmdR}, since $\mathcal{F}$ is contractible if
and only if $\mathcal{F}( \spec R)$ is. 
\end{proof}

Now the entire thick subcategory argument
(reviewed in the previous section for $R$-modules) can be carried out in
$\qcoh^\omega( \mathfrak{X})$, or, one can appeal to the axiomatic framework
in \cite{axiomatic}. 
In particular, just as in \Cref{firstclaim}, one concludes:

\begin{proposition} 
If $\mathcal{F}, \mathcal{F}' \in \qcoh^\omega( \mathfrak{X})$ and $\supp
\mathcal{F} \subset \supp \mathcal{F}'$, then the thick subcategory generated
by $\mathcal{F}'$ contains $\mathcal{F}$.
\end{proposition}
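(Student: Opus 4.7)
The plan is to transplant the proof of \Cref{firstclaim} into $\qcoh(\mathfrak{X}) \simeq \md(\Gamma(\mathfrak{X}, \otop))$, a symmetric monoidal, stable, presentable $\infty$-category with internal homs whose dualizable objects are precisely $\qcoh^\omega(\mathfrak{X})$ and whose unit is $\otop$. The Bousfield decomposition just proved plays the role of \Cref{bdecmdR} in the affine setting.

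First I would form the fiber sequence
\[
\mathcal{G} \xrightarrow{\phi} \otop \to \mathcal{F}' \otimes \mathbb{D}\mathcal{F}'
\]
in $\qcoh(\mathfrak{X})$, where $\mathbb{D}\mathcal{F}' = \underline{\hom}(\mathcal{F}', \otop)$ and the second arrow is the coevaluation. Exactly as in \Cref{firstclaim}, the coevaluation is split by the trace on $K(\mathfrak{p})$-homology at each $\mathfrak{p} \in \supp \mathcal{F}'$, so $\phi$ is $K(\mathfrak{p})_*$-null precisely at such $\mathfrak{p}$. Let $\mathcal{C}_{\mathcal{F}'} \subset \qcoh^\omega(\mathfrak{X})$ denote the thick subcategory generated by $\mathcal{F}'$. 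For every $r$, the cofiber of $1_{\mathcal{F}} \otimes \phi^{\otimes r} \colon \mathcal{F}\otimes \mathcal{G}^{\otimes r}\to \mathcal{F}$ lies in $\mathcal{C}_{\mathcal{F}'}$, so it suffices to make this map null for some $r$, whereupon $\mathcal{F}$ is realized as a retract of that cofiber and therefore belongs to $\mathcal{C}_{\mathcal{F}'}$.

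Adjointing gives a map $\psi\colon \otop \to \underline{\hom}(\mathcal{F}, \mathcal{F}) \otimes \mathbb{D}\mathcal{G}$, and I need its iterated tensor powers $\psi^{\otimes r} \colon \otop \to \underline{\hom}(\mathcal{F}, \mathcal{F}) \otimes \mathbb{D}\mathcal{G}^{\otimes r}$ to be null for $r \gg 0$. The hypothesis $\supp \mathcal{F} \subset \supp \mathcal{F}'$ forces $\psi$ to vanish on $K(\mathfrak{p})$-homology at \emph{every} $\mathfrak{p}$: at $\mathfrak{p} \notin \supp \mathcal{F}$ the K\"unneth formula yields $K(\mathfrak{p})_*\underline{\hom}(\mathcal{F}, \mathcal{F}) = K(\mathfrak{p})_*(\mathcal{F} \otimes \mathbb{D}\mathcal{F}) = 0$, while at $\mathfrak{p} \in \supp \mathcal{F} \subset \supp \mathcal{F}'$ the map $\phi$ itself is already $K(\mathfrak{p})_*$-null.

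The substantive step is then the stack-level analog of \Cref{nilpR}: any $\psi\colon \otop \to \mathcal{E}$ in $\qcoh(\mathfrak{X})$ vanishing on $K(\mathfrak{p})$-homology for every $\mathfrak{p}$ satisfies $\psi^{\otimes r} = 0$ for $r \gg 0$. This goes through by the same telescope argument as in \Cref{fieldnilpotent} and \Cref{nilpR}: form the free $A_\infty$-algebra $J\mathcal{E} = \bigoplus_{k \geq 0} \mathcal{E}^{\otimes k}$ in $\qcoh(\mathfrak{X})$, view $\psi$ as an element of $\pi_0 J\mathcal{E}$, and observe that the localization $J\mathcal{E}[\psi^{-1}]$ has vanishing $K(\mathfrak{p})$-homology for every $\mathfrak{p}$ and is hence contractible by the preceding Bousfield decomposition; compactness of $\otop$ then forces $\psi$ to be nilpotent. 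The only obstacle I anticipate is purely technical---verifying that the $A_\infty$-algebra and telescope constructions behave well in $\qcoh(\mathfrak{X})$, which is essentially automatic under the equivalence $\qcoh(\mathfrak{X}) \simeq \md(\Gamma(\mathfrak{X}, \otop))$---and the whole argument can in any case be bypassed by invoking the axiomatic thick subcategory machinery of \cite[\S 6]{axiomatic}, which takes the Bousfield decomposition as its only substantive input.
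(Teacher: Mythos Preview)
Your proposal is correct and is precisely what the paper intends: the paper does not give a self-contained proof here but simply remarks that the argument of \Cref{firstclaim} carries over verbatim to $\qcoh^\omega(\mathfrak{X})$ (or that one may invoke the axiomatic framework of \cite[\S 6]{axiomatic}), and you have faithfully spelled out that transplant. The only minor imprecision is notational---your ``$\psi^{\otimes r}$'' should target $(\underline{\hom}(\mathcal{F},\mathcal{F})\otimes\mathbb{D}\mathcal{G})^{\otimes r}$ rather than $\underline{\hom}(\mathcal{F},\mathcal{F})\otimes\mathbb{D}\mathcal{G}^{\otimes r}$, but the desired multiplicative power factors through the free one via the ring map on $\underline{\hom}(\mathcal{F},\mathcal{F})$, so this causes no trouble.
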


\begin{corollary} \label{possiblethick}
Every thick subcategory of $\qcoh^\omega( \mathfrak{X})$ is of the form
$\qcoh^\omega_Z( \mathfrak{X})$ for some subset $Z \subset X$ closed under
specialization. 
\end{corollary}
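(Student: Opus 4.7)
The plan is to mirror the end of the proof of \Cref{thickaffine}. Given a thick subcategory $\mathcal{C} \subset \qcoh^\omega(\mathfrak{X})$, I would assign to it the subset
\[ Z := \bigcup_{\mathcal{F} \in \mathcal{C}} \supp \mathcal{F} \subset X. \]
Each individual $\supp \mathcal{F}$ is closed in $X$ because $\mathcal{F}$ is dualizable and its homotopy group sheaves are coherent, and so $Z$, being a union of closed subsets, is automatically closed under specialization. The inclusion $\mathcal{C} \subset \qcoh^\omega_Z(\mathfrak{X})$ is then tautological from the definitions.

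The content of the statement is the reverse inclusion $\qcoh^\omega_Z(\mathfrak{X}) \subset \mathcal{C}$. Fix $\mathcal{G} \in \qcoh^\omega_Z(\mathfrak{X})$. My goal is to exhibit an object $\mathcal{F} \in \mathcal{C}$ with $\supp \mathcal{G} \subset \supp \mathcal{F}$; once this is done, the proposition immediately preceding the corollary places $\mathcal{G}$ in the thick subcategory generated by $\mathcal{F}$, and hence in $\mathcal{C}$.

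The main point to be careful about is that $Z$ is an \emph{a priori} infinite union of supports, while $\supp \mathcal{F}$ for a single $\mathcal{F}\in \mathcal{C}$ need only be one of the pieces. Here I would use the noetherian hypothesis on $X$: the closed subset $\supp \mathcal{G}$ has only finitely many irreducible components $T_1, \dots, T_m$. The generic point $\eta_i$ of each $T_i$ lies in $Z$, and therefore belongs to $\supp \mathcal{F}_i$ for some $\mathcal{F}_i \in \mathcal{C}$. Since $\supp \mathcal{F}_i$ is closed in $X$, it contains $\overline{\{\eta_i\}} = T_i$. Setting
\[ \mathcal{F} := \bigoplus_{i=1}^{m} \mathcal{F}_i \in \mathcal{C}, \]
which is a finite direct sum and hence lies in $\mathcal{C}$, we have $\supp \mathcal{F} = \bigcup_i \supp \mathcal{F}_i \supset \bigcup_i T_i = \supp \mathcal{G}$, as required.

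No serious obstacle is anticipated; the only nontrivial input beyond the preceding proposition is the noetherian reduction to finitely many irreducible components, which plays the role of the remark in the proof of \Cref{thickaffine} that one can realize any specialization-closed subset by summing the modules attached to each of its closed subsets.
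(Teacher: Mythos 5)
Your proof is correct and takes exactly the route the paper intends: assign to $\mathcal{C}$ the union $Z$ of the supports of its objects, observe $\mathcal{C} \subset \qcoh^\omega_Z(\mathfrak{X})$ is tautological, and deduce the reverse inclusion from the proposition immediately preceding the corollary, using noetherianness of $X$ to reduce a given $\supp \mathcal{G} \subset Z$ to finitely many irreducible components each caught by a single object of $\mathcal{C}$. The paper leaves this last noetherian reduction implicit (it appears only in the terse proof of the affine case, \Cref{thickaffine}); you have made it explicit, which is the correct fill-in.
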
 

The primary goal of the rest of this paper will be to study when the different
$\qcoh^\omega_Z( \mathfrak{X})$ are distinct: that is, we would like to know
when we can realize
a given closed subset as the support of a sheaf on $\mathfrak{X}$. We will
answer this question, via \Cref{ourthick}, when $X \to M_{FG}$ is \emph{affine}. 

\subsection{The case of a quotient stack}
In this subsection, we consider a case of 
\Cref{ourthick}. 
Suppose the Deligne-Mumford stack $X$ is given by the quotient
of an affine scheme by a finite group, so 
\[ X = (\spec R_0)/G,  \]
where $|G| < \infty$, and $R_0$ is a regular noetherian ring. 
Consider a flat, affine map $X \to M_{FG}$ and an even periodic refinement
$(\mathfrak{X}, \otop)$ of this map. 
In this case, we have
 a $G$-action on the $E_\infty$-ring $R = \otop( \spec R_0)$, and 
 \[ \Gamma( \mathfrak{X}, \otop) \simeq R^{hG}.  \]
 This case was discussed in \cite{MM}, and it is quite general (for instance,
 it includes $\mell$ once any prime is inverted). 
By the main result of \cite{MM}, one has:

\begin{proposition} We have an equivalence
\[ \md( R^{hG}) \simeq \qcoh( \mathfrak{X}) \simeq \md(R)^{h G},  \]
\end{proposition}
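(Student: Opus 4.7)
The plan is to establish the two equivalences separately. The first, $\md(R^{hG}) \simeq \qcoh(\mathfrak{X})$, is immediate from \Cref{affinethm}: by hypothesis $X \to M_{FG}$ is affine, hence in particular quasi-affine, so the global sections functor yields a symmetric monoidal equivalence $\qcoh(\mathfrak{X}) \simeq \md(\Gamma(\mathfrak{X}, \otop))$. Moreover $\Gamma(\mathfrak{X}, \otop) \simeq R^{hG}$ essentially by construction, since $\mathfrak{X}$ is presented by the groupoid $\spec R_0 \times G \rightrightarrows \spec R_0$ whose $\otop$-values are $R \rightrightarrows \prod_G R$.

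For the second equivalence $\qcoh(\mathfrak{X}) \simeq \md(R)^{hG}$, I would argue by flat descent along the atlas $\spec R_0 \to X$, which is a $G$-torsor. Its \v{C}ech nerve is the simplicial affine scheme $[n] \mapsto \spec R_0 \times G^n$ (with $G^n$ viewed as a constant finite scheme). Using the defining formula
\[ \qcoh(\mathfrak{X}) \simeq \varprojlim_{\spec A \to X \in \affx} \md(\otop(\spec A)), \]
and the fact that the \v{C}ech nerve of an étale cover is cofinal in $\affx$ for the purposes of this limit (a standard descent statement), we obtain
\[ \qcoh(\mathfrak{X}) \simeq \varprojlim_{[n] \in \Delta} \md(\otop(\spec R_0 \times G^n)). \]

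Since $\otop$ sends finite disjoint unions of affines étale over $X$ to products of $E_\infty$-rings, and $\md$ sends products of rings to products of module $\infty$-categories, the cosimplicial $\infty$-category above identifies with $[n] \mapsto \md(R)^{\times G^n}$. This is precisely the standard cobar resolution whose totalization computes the homotopy fixed points $\md(R)^{hG}$ for the $G$-action on $\md(R)$ induced by the $G$-action on $R$. Combining the two equivalences yields the proposition.

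The main technical point is to verify that the cosimplicial structure maps of the \v{C}ech nerve---diagonals of $\spec R_0$, projections, and codiagonals of $G$---translate into the standard cobar coface and codegeneracy maps encoding the $G$-action on $\md(R)$. This is routine once one observes that the isomorphism $\spec R_0 \times_X \spec R_0 \cong G \times \spec R_0$ is given by $(x, y) \mapsto (g, x)$ where $y = g \cdot x$, so that the two projections become, on $\otop$-values, the structure map $R \to \prod_G R$ into the diagonal and the $G$-action map respectively. Everything else is formal.
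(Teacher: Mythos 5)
Your proposal is correct and takes essentially the approach the paper relies on: the paper simply cites the main result of \cite{MM} (i.e.\ \Cref{affinethm}) without spelling out details, whereas you correctly unpack it into the global-sections equivalence for the first isomorphism and the \v{C}ech/cobar descent along the $G$-torsor $\spec R_0 \to X$ for the second. The only minor imprecision is your use of the word ``cofinal'' for the passage from the full diagram over $\affx$ to the \v{C}ech nerve of a single cover; this is not a cofinality of index categories but rather the \'etale descent property of $\md(\otop(-))$ built into the definition of $\qcoh(\mathfrak{X})$, though the conclusion you draw from it is correct.
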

Our goal is to show,  as claimed in \Cref{ourthick}, that every thick subcategory of $\md( R^{hG})$
should arise
from a $G$-invariant subset of $\spec R_0$ closed under specialization (i.e., a
union of closed substacks of
$X$).
We begin with some lemmas. 

\begin{lemma} 
\label{sshorline} Let $E_r^{s,t}$ be an upper half-plane spectral sequence of (not
necessarily commutative) algebras for $s \geq 0,
t \in \mathbb{Z}$. 
Suppose that $x \in E_2^{0, r}$ is an element. 
Suppose moreover: 
\begin{enumerate}
\item $x$ is central in $E_2^{\ast, \ast} $. 
\item $E_2^{s, t}$ is torsion for $s > 0$ and $t$ arbitrary.
\item The spectral sequence degenerates at a finite stage. 
\end{enumerate}
Then a power of $x$ survives to $E_\infty^{0, r}$. 
\end{lemma}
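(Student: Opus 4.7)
The plan is to inductively kill differentials on successively higher powers of $x$, using the torsion hypothesis together with the Leibniz rule. By hypothesis (3), there exists a finite $K$ with $d_k = 0$ for $k > K$, so only the differentials $d_2, \dots, d_K$ need to be addressed.

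First, I would replace $x$ by $x^2$ if $r = |x|$ is odd; this is still central, lives in bidegree $(0, 2r)$, and the conclusion only concerns a power of $x$, so one may assume $|x|$ is even. Under this evenness, centrality implies that the graded Leibniz rule for any differential simplifies to the familiar identity $d_k(x^m) = m\, x^{m-1} d_k(x)$, with all signs trivial and all the $x$'s pulled to the same side.

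Next, I would proceed by induction on the page. On $E_2$, the class $d_2(x) \in E_2^{2, r-1}$ is torsion by hypothesis (2), so there is some $n_2 \geq 1$ with $n_2 \cdot d_2(x) = 0$; the Leibniz formula above then gives $d_2(x^{n_2}) = n_2\, x^{n_2 - 1} d_2(x) = 0$, so $x^{n_2}$ descends to a class on $E_3$. Inductively, if some power $x^N$ has survived to $E_k$, then $d_k(x^N) \in E_k^{k, \ast}$ is a subquotient of $E_2^{k, \ast}$, hence torsion; picking $n_k \geq 1$ that annihilates it,
\[ d_k\!\bigl(x^{N n_k}\bigr) \;=\; n_k\, x^{N(n_k - 1)}\, d_k(x^N) \;=\; x^{N(n_k - 1)} \cdot \bigl(n_k\, d_k(x^N)\bigr) \;=\; 0, \]
so $x^{N n_k}$ survives to $E_{k+1}$. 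Iterating this from $k = 2$ up through $k = K$ produces a power of $x$ which is a permanent cycle, as required.

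The argument is essentially a bookkeeping exercise, and the main items to check are that the hypotheses transfer across pages: centrality of the image of $x$ on each $E_k^{*,*}$ is automatic from the fact that the multiplicative structure on later pages is induced from that on $E_2$, and the torsion condition on $E_2^{s,*}$ for $s > 0$ passes to any subquotient. Neither of these presents a genuine obstacle, so the induction runs cleanly and terminates at page $K+1 = E_\infty$.
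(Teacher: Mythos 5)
Your proof is correct and takes essentially the same approach as the paper: apply the Leibniz rule together with the torsion hypothesis to show that at each page a suitable power of $x$ has vanishing differential, and iterate through the finitely many pages where nonzero differentials can occur. The paper's proof is terser (it does not explicitly address the odd-degree sign issue you dispose of by passing to $x^2$, nor spell out the induction), but the underlying argument is identical.
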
 
\begin{proof} 
In fact, suppose $d_2(x)$ is $N$-torsion;
then this implies that $x^N$ survives to $E_3$. Repeating, it follows that a
sufficiently divisible enough power of $x$ survives to $E_4, E_5$, and so forth to
any finite stage. Since the spectral sequence stops at a finite stage, it
follows that a high power of $x$ survives the spectral sequence. 
\end{proof} 

\begin{lemma} \label{lemnormgal}
Let $x \in R_0^G$. Then a sufficiently divisible power of $x$ is in the image of
$\pi_0 R^{hG} \to \pi_0 R$. 
\end{lemma}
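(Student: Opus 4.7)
The plan is to identify the descent spectral sequence computing $\pi_* R^{hG} \simeq \pi_* \Gamma(\mathfrak{X}, \otop)$ with the homotopy fixed point spectral sequence
$$E_2^{s,t} = H^s(G, \pi_t R) \Longrightarrow \pi_{t-s} R^{hG},$$
which arises from the \'etale cover $\spec R_0 \to X = (\spec R_0)/G$ whose \v Cech nerve is the bar construction computing group cohomology. Our element $x \in R_0^G = H^0(G, \pi_0 R)$ defines a class in $E_2^{0,0}$, and the image of the edge homomorphism $\pi_0 R^{hG} \to \pi_0 R$ is precisely $E_\infty^{0,0} \subset E_2^{0,0}$ (no differentials land at this position). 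It therefore suffices to produce an $n$ such that $x^n$ is a permanent cycle.

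This is exactly the situation of \Cref{sshorline}, applied to $x \in E_2^{0,0}$. I would verify its three hypotheses in turn. First, $x$ has total degree $0$ and the $E_2$-page carries a graded commutative cup product, so $x$ is central. Second, a standard transfer--restriction argument against the trivial subgroup shows that $|G|$ annihilates $H^s(G, M)$ for every $G$-module $M$ and every $s > 0$, so the $E_2$-page is torsion in positive filtration. Third, the spectral sequence degenerates at a finite page: since $X \to M_{FG}$ is affine (hence representable, hence tame), \Cref{vanishingline} applied to the quasi-coherent sheaf $\otop$ on $\mathfrak{X}$ furnishes a horizontal vanishing line at a finite page of the descent spectral sequence. \Cref{sshorline} then guarantees that $x^n$ survives to $E_\infty^{0,0}$ for some $n$, and any permanent-cycle representative of $x^n$ in $\pi_0 R^{hG}$ maps to $x^n \in \pi_0 R$.

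The essential input, and the only step that requires genuine work, is the finite degeneration from \Cref{vanishingline}. Without it, the factor of $|G|$ absorbed at each stage (via $d_r(x^N) = N x^{N-1} d_r(x)$ and the $|G|$-torsionness of $d_r(x)$) would not suffice to guarantee survival to $E_\infty$, since a class could in principle support differentials on arbitrarily many pages. Once a horizontal vanishing line is in place at page $s$, only the differentials $d_2, \dots, d_{s-1}$ are in play, each absorbs a further factor of $|G|$, and the needed power of $x$ is bounded explicitly by $|G|^{s-1}$.
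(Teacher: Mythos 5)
Your proof is correct and follows essentially the same approach as the paper: both identify the homotopy fixed point spectral sequence $H^s(G;\pi_t R) \Rightarrow \pi_{t-s}R^{hG}$, place $x$ in $E_2^{0,0}$, and feed it through \Cref{sshorline} using the $|G|$-torsionness of the positive-filtration part of $E_2$ and the finite degeneration supplied by \Cref{vanishingline}. You spell out a couple of details the paper leaves implicit (centrality of $x$ and the edge-homomorphism interpretation of $E_\infty^{0,0}$), but the argument is the same.
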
 

\begin{proof} 
To see this, consider the homotopy fixed point spectral sequence
\[ H^i( G; \pi_j R) \implies \pi_{j-i} R^{hG}.  \]
By assumption, $x$ defines an element of $E_2^{0, 0}$. This spectral
sequence has the two key properties of \Cref{sshorline}. 
Above the $s = 0$ line, everything is torsion: in fact, annihilated by
$|G|$. 
Moreover, the spectral sequence degenerates at a finite stage (in fact,  with a horizontal
vanishing line), thanks to \Cref{vanishingline}. 
This is enough to imply the lemma by \Cref{sshorline}. \end{proof} 

\begin{remark} 
If $x$ is invertible, one may prove this using the \emph{norm map} $ \mathfrak{gl}_1( R) \to
\mathfrak{gl}_1(R^{hG}) \simeq \mathfrak{gl}_1(R)^{hG}$.

\end{remark} 

\begin{proposition} \label{galthickc}
The thick subcategories of $\mo( \Gamma( \mathfrak{X}, \otop))$ are in
bijection with the $G$-invariant subsets of $\spec R_0$ closed under
specialization. 
\end{proposition}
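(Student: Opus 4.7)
The plan is to combine \Cref{possiblethick}---which already shows every thick subcategory of $\mo(R^{hG}) \simeq \qcoh^\omega(\mathfrak{X})$ has the form $\qcoh^\omega_W(\mathfrak{X})$ for some specialization-closed $W \subset X$---with a realizability statement establishing distinctness. Since the topological space $|X|$ is the set-theoretic quotient $|\spec R_0|/G$, specialization-closed subsets of $X$ correspond bijectively to $G$-invariant specialization-closed subsets of $\spec R_0$. As in the proof of \Cref{thickaffine}, to establish distinctness it suffices to produce, for every $G$-invariant closed subset $Z \subset \spec R_0$, an object $M \in \mo(R^{hG})$ whose associated quasi-coherent sheaf on $\mathfrak{X}$ has support equal to the image $Z/G \subset X$.

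The key algebraic input is a way to produce $G$-invariant generators (up to radical). Write $Z = V(I)$ for a $G$-invariant radical ideal $I \subset R_0$. For any $z \in I$, the polynomial $\prod_{g \in G}(T - g(z))$ has coefficients $c_1, \dots, c_{|G|} \in R_0^G$, each lying in $I$ since $I$ is $G$-stable. Substituting $T = z$ expresses $z^{|G|}$ as an $R_0$-linear combination of the $c_i$, so $z \in \sqrt{(c_1, \dots, c_{|G|}) R_0}$. Applying this to each element of a finite generating set of $I$ and collecting the resulting coefficients produces finitely many $y_1, \dots, y_m \in R_0^G \cap I$ with $V(y_1, \dots, y_m) = Z$.

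By \Cref{lemnormgal}, for each $y_i$ some power $y_i^{N_i}$ lifts to an element $\tilde{y}_i \in \pi_0 R^{hG}$. I then set
\[ M := R^{hG}/\tilde{y}_1 \wedge_{R^{hG}} \cdots \wedge_{R^{hG}} R^{hG}/\tilde{y}_m, \]
which is perfect as a finite iterated cofiber. Pulling back along $R^{hG} \to R$ yields $R/(y_1^{N_1}, \dots, y_m^{N_m})$ in $\mo(R)$, whose support on $\spec R_0$ is $V(y_1^{N_1}, \dots, y_m^{N_m}) = V(y_1, \dots, y_m) = Z$ by the computation underlying \Cref{secondclaim} (using the K\"unneth property of the $K(\mathfrak{p})_*$). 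Since the support of a quasi-coherent sheaf on $\mathfrak{X}$ is the image under $|\spec R_0| \to |X|$ of the support of its restriction to $\spec R$, this $M$ realizes $Z/G$ as desired.

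I expect the main obstacle to be precisely the first step: passing from an arbitrary $G$-invariant ideal to finitely many $G$-invariant generators with the same radical. The characteristic-polynomial trick supplies this cleanly, without needing $R_0^G$ to be noetherian. Everything else is formal once \Cref{lemnormgal,possiblethick,secondclaim} are in hand, together with the standard observation (compare the end of the proof of \Cref{thickaffine}) that every specialization-closed subset is the union of its closed subsets, so realizability of each closed subset is enough to separate all specialization-closed subsets on the level of thick subcategories.
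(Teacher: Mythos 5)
Your proof is correct and follows the same overall architecture as the paper's: reduce via \Cref{possiblethick} to realizability, produce finitely many $G$-invariant elements cutting out the given closed subset up to radical, lift suitable powers to $\pi_0 R^{hG}$ via \Cref{lemnormgal}, and take the Koszul-type iterated cofiber. The only genuine divergence is the algebraic sublemma producing the $G$-invariant generators. The paper argues prime-by-prime: for each prime $\mathfrak{p}$ not containing $I$, prime avoidance gives $x \in I$ missing every $g\mathfrak{p}$, and the norm $N_G x=\prod_g gx$ lies in $I^G\setminus\mathfrak{p}$; one then collects finitely many such norms (implicitly using noetherianity of $\spec R_0$ to terminate). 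You instead take the full characteristic polynomial $\prod_{g\in G}(T-g(z))$ of a generator $z$ of $I$: its coefficients $c_1,\dots,c_{|G|}$ are elementary symmetric functions of the $G$-orbit, hence lie in $R_0^G\cap I$, and evaluating at $T=z$ shows $z^{|G|}\in(c_1,\dots,c_{|G|})$. Running over a finite generating set of $I$ immediately yields finitely many invariant elements with the right radical, with no prime avoidance and no separate finitization step. Your version is a bit more self-contained and mechanical; the paper's is geometrically transparent about why a single norm separates $I$ from a given prime. Both are fine, and the remainder of your argument (lifting, the Koszul construction, identifying supports via \Cref{secondclaim} and the surjection $|\spec R_0|\to|X|$, and reducing specialization-closed subsets to their closed pieces) matches the paper's reasoning.
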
 

\begin{proof} 
By \Cref{possiblethick}, 
the remaining step is to show that given a $G$-invariant closed subset $Z \subset
\spec R_0$, we can find a  perfect $ \Gamma( \mathfrak{X}, \otop)$-module with
support exactly $Z$. 
We would like to imitate the construction used to prove \Cref{thickaffine}, although the problem is
that the $x_i$ need not live inside $\pi_* \Gamma( \mathfrak{X}, \otop)  =
\pi_* R^{hG}$. We
can get around this as follows. 

Let $Z$ correspond to the $G$-invariant radical ideal $I
\subset R_0$. We can find an ideal $J \subset I$ such that $\mathrm{rad}(J) =
I$ and such that $J$ is itself generated by $G$-invariant elements. 
Indeed, for each prime
$\mathfrak{p}$ that fails to contain $I$, 
we observe that $g \mathfrak{p}$ also fails to contain $I$ for each $g \in G$. 
Therefore, by prime avoidance \cite[Lemma 3.3]{eisenbud}, choose an element $x \in I \setminus
\bigcup_{g \in G} (g\mathfrak{p})$. Then
consider its \emph{norm} $N_G x = \prod_{g \in G} gx$, 
which is in $I$ and not in $\mathfrak{p}$. 
Taking norms such as these, we can choose $G$-invariant elements of $I$ such that any prime
that fails to contain  $I$ fails to contain one of these elements. 

Therefore, we choose $G$-invariant elements $\left\{x_1, \dots, x_n\right\}
\subset R_0^G$ such that they cut out the closed \emph{subset} $Z \subset \spec
R_0$. We would like to take as our $\Gamma( \mathfrak{X}, \otop)$-module $M$
the iterated quotient $\Gamma( \mathfrak{X}, \otop)/(x_1, \dots, x_n)$, except
that the $x_i$ do not necessarily belong to $\pi_0 \Gamma( \mathfrak{X},
\otop)$. 
However, by \Cref{lemnormgal}, after raising the $x_i$ to a suitable power, we
can arrange that they do belong to $\pi_0 \Gamma(\mathfrak{X}, \otop)$.
Then, the quotient $R^{hG}/(x_1, \dots, x_n)$ is the
desired $R^{hG}$-module. 

\end{proof}

\begin{corollary} 
Suppose $R$ is an $E_\infty$-ring with an action of a finite group $G$. Suppose
that
\begin{enumerate}
\item $R$ is even periodic with $\pi_0 R$ regular.  
\item $R^{hG} \to R$ is a faithful $G$-Galois extension (in the sense of
\cite{rognes}). 
\end{enumerate}
Then the thick subcategories of $\md^\omega(R^{hG})$ are in bijection with the
$G$-invariant subsets of $\spec \pi_0 R $ closed under specialization. 
\end{corollary}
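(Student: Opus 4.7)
The plan is to adapt the proof of \Cref{galthickc} directly, with the faithful $G$-Galois hypothesis on $R^{hG} \to R$ playing the role that the \cite{MM}-style descent played there. A faithful finite Galois extension in the sense of \cite{rognes} yields the descent equivalence $\md(R^{hG}) \simeq \md(R)^{hG}$, makes base change $R \wedge_{R^{hG}} (-)$ conservative on module categories, and preserves and reflects perfectness (since $R$ is itself perfect over $R^{hG}$). This places us in a setting where \Cref{thickaffine} already classifies thick subcategories of $\md^\omega(R)$ in terms of specialization-closed subsets of $\spec \pi_0 R$, and the task is to transfer this classification through the $G$-action.

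For the map from thick subcategories to $G$-invariant specialization-closed subsets, associate to $N \in \md^\omega(R^{hG})$ the support $\supp(N) := \supp_R(R \wedge_{R^{hG}} N) \subset \spec \pi_0 R$, which is $G$-invariant because the $R$-module $R \wedge_{R^{hG}} N$ carries a canonical $G$-equivariant structure whose induced action on $\spec \pi_0 R$ is the given one. Running the nilpotence-and-telescope argument of \Cref{firstclaim} in $\md^\omega(R)$ on $R \wedge_{R^{hG}} N$ and $R \wedge_{R^{hG}} N'$ and then descending along the faithful extension, one shows that $\supp(N) \subset \supp(N')$ forces $N$ into the thick subcategory of $\md^\omega(R^{hG})$ generated by $N'$. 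Thus every thick subcategory of $\md^\omega(R^{hG})$ is the full preimage under base change of $\md_Z^\omega(R)$ for a uniquely determined $G$-invariant specialization-closed $Z \subset \spec \pi_0 R$.

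For the realization direction, imitate \Cref{galthickc}: reduce to the case of a $G$-invariant closed subset $Z$ with radical ideal $I \subset \pi_0 R$, and use prime avoidance together with norms $\prod_{g \in G} g x$ to produce $G$-invariant elements $x_1, \ldots, x_n \in \pi_0 R$ generating an ideal with radical $I$. The key simplification over \Cref{galthickc} is that \emph{we need not lift the $x_i$ into $\pi_0 R^{hG}$} via the horizontal vanishing line of \Cref{lemnormgal}, which relied on \Cref{vanishingline} and is not available here. Instead, form $M := R/(x_1, \ldots, x_n) \in \md^\omega(R)$, which carries a canonical $G$-equivariant structure since each cofiber $R/x_i$ does. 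Pushing $M$ through the descent equivalence $\md(R)^{hG} \simeq \md(R^{hG})$ produces the desired perfect $R^{hG}$-module $N$ with $R \wedge_{R^{hG}} N \simeq M$ and $\supp(N) = Z$.

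The main obstacle is the descent argument in the second paragraph: one must verify that thick-subcategory membership itself descends from $\md^\omega(R)$ back to $\md^\omega(R^{hG})$, not merely that base change is conservative. This rests precisely on the fact that $R$ is dualizable over $R^{hG}$, a defining feature of faithful finite Galois extensions; with this in hand, the argument of \Cref{firstclaim} transports intact, and the $G$-equivariant-module trick in the third paragraph bypasses the nilpotence/vanishing-line machinery of Section~3 entirely.
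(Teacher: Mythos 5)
Your first two paragraphs are fine and in line with the paper (the paper just cites the residue-field construction of \Cref{k(p)} to give the ``only such thick subcategories'' half, but the support/base-change route you sketch works too). The problem is the third paragraph, where you claim to bypass \Cref{lemnormgal}.

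You assert that $R/x_i$ ``carries a canonical $G$-equivariant structure'' merely because $x_i \in (\pi_0 R)^G$. This is not true. An object of $\md(R)^{hG}$ presented as the cofiber of a map $R \to R$ needs that map to live in $\md(R)^{hG}$, i.e.\ to be a \emph{homotopy-coherent} fixed point of $G$ acting on $\mathrm{Hom}_{\md(R)}(R,R) = R$ — in other words, a class in $\pi_0(R^{hG})$, not merely in $(\pi_0 R)^G$. The edge map $\pi_0 R^{hG} \to (\pi_0 R)^G$ is generally not surjective, and the failure of surjectivity is exactly the phenomenon that \Cref{lemnormgal} addresses (it shows a sufficiently divisible power of any $x \in (\pi_0 R)^G$ does lift). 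So your construction secretly presupposes the very lemma you say you are avoiding. Moreover, your stated reason for avoiding it — that the vanishing line of \Cref{vanishingline} is ``not available here'' — contradicts the paper's own observation: a faithful $G$-Galois extension is descendable in the sense of \cite{galois}, and descendability implies the homotopy fixed point spectral sequence degenerates at a finite page with a horizontal vanishing line, which is exactly what \Cref{sshorline} and hence \Cref{lemnormgal} need. The paper's proof uses this to lift (powers of) the $G$-invariant norms into $\pi_0 R^{hG}$ and then forms the Koszul-type quotient over $R^{hG}$ directly. Your argument should do the same; as written, the claimed $G$-equivariant structure on $R/(x_1,\dots,x_n)$ is a gap.
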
 
\begin{proof} 
This result is not a corollary of the previous ones, but rather of the proof.
Since $R^{hG} \to R$ is faithful $G$-Galois, we have an equivalence of
$\infty$-categories $\md(R^{hG}) \simeq \md(R)^{hG}$. This is essentially
Galois descent, and has been observed independently by Gepner-Lawson and
Meier; see for example \cite[Th. 9.5]{galois}. 

Moreover, since $R$ is even
periodic with $\pi_0 R$ regular, we can use the residue
field construction of \Cref{k(p)} to produce multiplicative homology theories on
$\md(R^{hG})$ satisfying K\"unneth isomorphisms that detect all nonzero objects. 
This gives one half of the classification of thick subcategories. 

Using the same construction as above, 
one sees that the argument proving \Cref{galthickc} applies here too,
provided that \Cref{lemnormgal} applies. 
But in fact \Cref{lemnormgal} is valid for any faithful $G$-Galois extension. 
As shown in \cite{galois}, the map $R^{hG} \to R$ is ``descendable'' in the sense
of \S 3-4 of that paper. 
One can use the analogous properties of the homotopy fixed point
(or descent) spectral sequence (degeneration at a finite stage with a horizontal vanishing
line, so that \Cref{sshorline} applies) which are valid for the homotopy fixed point spectral sequence for any
faithful $G$-Galois extension; see \cite[\S 4]{galois}. 

\end{proof} 

\section{The general case}

In this section, we complete the proof of \Cref{ourthick}. 
Throughout this section, we fix the even periodic refinement $\mathfrak{X} =
({X}, \otop)$ of the affine map $X \to M_{FG}$, where $X$ is a regular,
noetherian, and separated Deligne-Mumford stack. 
Our goal is now to produce sufficiently many perfect $\Gamma( \mathfrak{X},
\otop)$-modules to see that any closed substack of $X$ can be realized as the
support.
Since $X$ need not be a quotient stack by a finite group, we will need
a different approach from the previous section.  We will use the theory of
graded Hopf algebroids (see, e.g., \cite{green}). 

\subsection{An abstract periodicity theorem}

The basic step in producing $\Gamma( \mathfrak{X}, \otop)$-modules is the following analog of the periodicity theorem in our setting.
\begin{theorem} \label{ourperiod}
Let $\mathcal{F} \in \qcoh^{\omega}( \mathfrak{X})$  have the property that
the homotopy group sheaves of $\mathcal{F}\wedge \mathbb{D}\mathcal{F}$ are scheme-theoretically supported on a closed substack $Z \subset X$. Given a
section $s \in H^0( Z, \omega^k)$, there exists a self-map
\[ \Sigma^{nk}\mathcal{F} \to \mathcal{F},  \]
for some $n$, 
whose map on homotopy group sheaves is given by multiplication by $s^{\wedge
n}$. 
\end{theorem}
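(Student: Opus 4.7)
The plan is to realize the desired self-map as an element of $\pi_{nk}$ of the endomorphism $E_1$-algebra of $\mathcal{F}$, produced by lifting a power of $s$ through the descent spectral sequence, with \Cref{vanishingline} providing the crucial finiteness that makes the lift possible.

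Write $\mathcal{G} = \mathcal{F} \wedge \mathbb{D}\mathcal{F}$, an $E_1$-algebra object in $\qcoh(\mathfrak{X})$ with unit $\eta\colon \mathcal{O}_{\mathfrak{X}} \to \mathcal{G}$. By hypothesis, $\pi_* \mathcal{G}$ is scheme-theoretically supported on $Z$, so $\eta$ factors through $\mathcal{O}_Z \to \mathcal{G}$ after suitable truncation. Set $E := \Gamma(\mathfrak{X}, \mathcal{G})$; a self-map $\Sigma^m \mathcal{F} \to \mathcal{F}$ is the same datum as an element of $\pi_m E$, and its induced map on the homotopy group sheaves of $\mathcal{F} \wedge \mathbb{D}\mathcal{F}$ is, modulo descent filtration, given by multiplication by its image in $E_\infty^{0,m}$. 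The multiplicative descent spectral sequence takes the form
\[ E_2^{p,q} = H^p(X, \pi_q \mathcal{G}) = H^p(Z, \pi_q \mathcal{G}) \Longrightarrow \pi_{q-p} E, \]
and via $\eta$ together with the inclusion $\omega^k|_Z \hookrightarrow \pi_k \mathcal{G}$, the section $s$ yields a class $\overline{s} \in E_2^{0,k}$. Because $\eta$ takes values in the center, $\overline{s}$ is central on every page.

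By \Cref{vanishingline} applied to $\mathcal{G}$, there exist $r_0$ and $N$ such that $E_r^{p,q} = 0$ for all $p > N$ and $r \geq r_0$; in particular, the spectral sequence degenerates at $E_{r_0}$. The core claim is that for some $n > 0$, $\overline{s}^n \in E_2^{0, nk}$ is a permanent cycle. Granting this, any lift of $\overline{s}^n$ to $\pi_{nk} E$ gives the desired self-map $\Sigma^{nk}\mathcal{F} \to \mathcal{F}$: tracing through the spectral sequence, the induced endomorphism on the coherent sheaf $\pi_*(\mathcal{F} \wedge \mathbb{D}\mathcal{F})$ is precisely multiplication by $s^n$, since the image of $\overline{s}^n$ in $E_\infty^{0,nk}$ represents that endomorphism up to terms of higher filtration, and these higher-filtration terms act by zero on the associated graded of the filtration on $\pi_*\mathcal{G}$.

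Producing the permanent cycle $\overline{s}^n$ is the main obstacle, and this is precisely the step where the argument parallels \Cref{lemnormgal} but in greater generality. The structure of the argument mirrors \Cref{sshorline}: centrality of $\overline{s}$ and the Leibniz rule yield $d_r(\overline{s}^m) = m\,\overline{s}^{m-1}d_r(\overline{s})$, so an appropriate power of $\overline{s}$ can be chosen to kill the differential on any given page. Because the vanishing line forces only finitely many pages $r < r_0$ to carry nontrivial differentials out of filtration $0$, and because $Z$ is noetherian with $\pi_* \mathcal{G}$ coherent, one handles these pages one at a time, iteratively raising the exponent. For pages $r \geq r_0$, the horizontal vanishing line automatically makes every relevant target zero. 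The delicate part is verifying that at each finite page the relevant differential is genuinely killed; as in \Cref{lemnormgal}, the finiteness of the residual DM-stackiness of $Z$ over its coarse moduli ensures that $E_r^{p,q}$ with $p>0$ is torsion on the $p$-local factors where it matters, after which the iterative Leibniz argument terminates and produces the required $n$.
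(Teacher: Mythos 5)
Your proof follows the same overall strategy as the paper: consider $\mathcal{G} = \mathcal{F} \wedge \mathbb{D}\mathcal{F} \simeq \mathrm{End}(\mathcal{F})$ as an $\mathbb{E}_1$-algebra in $\qcoh(\mathfrak{X})$, identify the desired self-map with a class in $\pi_{nk}\Gamma(\mathfrak{X}, \mathcal{G})$, observe that $s$ gives a central class in $E_2^{0,k}$ of the descent spectral sequence, and lift a power of it via the iterative Leibniz argument (\Cref{sshorline}) together with the horizontal vanishing line of \Cref{vanishingline}. That all matches the paper.

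There is, however, a genuine gap in the torsion claim --- the input that makes the Leibniz induction actually terminate. You justify the torsion of $E_2^{p,q}$ for $p > 0$ by appealing to ``finiteness of the residual DM-stackiness of $Z$ over its coarse moduli.'' That does not suffice: the Leray spectral sequence over the coarse moduli space contributes torsion only in the gerbe direction, but the coarse space itself can carry non-torsion higher quasi-coherent cohomology (think of $\mathbb{P}^1$, which has trivial stackiness yet $H^1(\mathbb{P}^1, \omega) \cong \mathbb{Z}$). The paper's argument uses instead the standing hypothesis that $X \to M_{FG}$ is \emph{affine}, whence $X_{\mathbb{Q}} \to (M_{FG})_{\mathbb{Q}} \simeq B\mathbb{G}_m$ is affine, so $X_{\mathbb{Q}}$ is a $\mathbb{G}_m$-quotient of an affine scheme and therefore has no higher quasi-coherent sheaf cohomology. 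It follows that $H^p(X, \pi_q\mathcal{G}) \otimes \mathbb{Q} = 0$ for $p > 0$, i.e., $E_2^{p,q}$ is torsion for $p > 0$. You should replace the DM-stackiness appeal with this argument; without it, the step ``at each finite page the relevant differential is genuinely killed'' is not justified. (The comparison you draw to \Cref{lemnormgal}, where everything above the zero line is literally annihilated by $|G|$, is the special case where $X$ is a quotient by a finite group; it does not carry over to the general affine-over-$M_{FG}$ setting.) Everything else in your argument --- in particular the passage from the $E_\infty$-class back to the action on homotopy group sheaves, which you can handle by restricting to affines over $X$, where the local descent spectral sequence degenerates --- is sound.
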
 
\begin{proof} 
In fact, we consider the endomorphism ring $\mathrm{End}( \mathcal{F})$,
which is an $A_\infty$-algebra
internal to the category $\qcoh^{\omega}(\mathfrak{X})$. Note first that
$\mathrm{End}(\mathcal{F})$ has its homotopy group sheaves 
supported on the closed substack $Z$. The homotopy group sheaves
$\mathrm{End}_*(\mathcal{F})$ of
$\mathrm{End}( \mathcal{F})$ form a sheaf of graded associative algebras on
$Z$, and we get a map
of quasi-coherent sheaves on $Z$ (or $X$)
\[ \bigoplus_{k = - \infty}^\infty \omega_Z^{\otimes k}  \to \mathrm{End}_*( \mathcal{F}).  \]
This is obtained from the natural map
\[  \bigoplus_{k = - \infty}^\infty \omega^{\otimes k}  \to \mathrm{End}_*( \mathcal{F}),  \]
which has the property that it factors through the base-change to $Z$. 
Note in particular that this map is central. 
In particular,  the section $s \in H^0( Z, \omega^k)$ defines a central element of
$H^0( X, \pi_k \mathrm{End}( \mathcal{F}))$: more precisely, a \emph{central} element in
the $E_2$-page of
the spectral sequence 
\[ E_2^{s,t} = H^i( X, \pi_j \mathrm{End}( \mathcal{F})) \implies \pi_{j-i}
\Gamma(\mathfrak{X}, \mathrm{End}(\mathcal{F})).  \]
In this spectral sequence, everything above the horizontal line $s = 0$ is torsion, as the
rationalization $X_{\mathbb{Q}}$ is the quotient of an affine scheme by
$\mathbb{G}_m$ in view of the affine map $X_{\mathbb{Q}} \to
(M_{FG})_{\mathbb{Q}} \simeq B \mathbb{G}_m$.
In particular, the presentation $X_{\mathbb{Q}} = (\mathrm{affine})/\mathbb{G}_m$ implies
that $X_{\mathbb{Q}}$ has no higher sheaf cohomology. 
Therefore, 
it follows by
\Cref{vanishingline} and \Cref{sshorline} that a power of $s$ survives the spectral sequence and defines a
global endomorphism of $\mathcal{F}$ as desired. 
\end{proof} 

This result \emph{almost} reduces our work to pure algebra. 
The situation becomes slightly tricky, though, because while the
set-theoretic support is well-behaved in cofiber sequences, the
\emph{scheme-theoretic} support (which is what intervenes in \Cref{ourperiod}) is less
so. We now note further consequences of \Cref{ourperiod} that will be used in
the sequel.

\begin{lemma} 
\label{froblift}
Let $Y \subset Y'$ be a nilpotent thickening of Artin stacks, and let
$\mathcal{L} \in \pic( {Y}')$. Then any  \emph{torsion} section $s \in H^0( Y,
\mathcal{L})$ has the property that some power of $s$ extends over $Y'$. 
\end{lemma}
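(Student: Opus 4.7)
The plan is to filter the thickening $Y \subset Y'$ by square-zero stages and inductively lift powers of $s$ through each stage. Let $\mathcal{J} \subset \mathcal{O}_{Y'}$ be the ideal sheaf of $Y$, and let $N$ be an integer with $\mathcal{J}^N = 0$. Define $Y_j \subset Y'$ to be the closed substack cut out by $\mathcal{J}^{j+1}$, so that $Y = Y_0 \subset Y_1 \subset \cdots \subset Y_{N-1} = Y'$, and each $Y_j \subset Y_{j+1}$ is a square-zero thickening with ideal sheaf $\mathcal{J}^{j+1}/\mathcal{J}^{j+2}$ (since $(\mathcal{J}^{j+1})^2 \subset \mathcal{J}^{j+2}$). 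The aim is to produce, for each $j$, a positive integer $k_j$ and a section $\tau_j \in H^0(Y_j, \mathcal{L}^{\otimes k_j})$ with $\tau_j|_Y = s^{k_j}$; taking $j = N-1$ yields the desired extension.

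Two ingredients drive the induction. First, a \emph{Leibniz rule} for the obstruction: for any square-zero thickening $Z \subset Z'$ with ideal $\mathcal{K}$, line bundle $\mathcal{M}$ on $Z'$, and section $\sigma \in H^0(Z, \mathcal{M}|_Z)$, the obstruction $\delta(\sigma^m) \in H^1(Z, \mathcal{K} \otimes \mathcal{M}^{\otimes m}|_Z)$ to lifting $\sigma^m$ equals $m\sigma^{m-1}\delta(\sigma)$. This follows from the standard local computation: if $\tilde{\sigma}_\alpha, \tilde{\sigma}_\beta$ are two local lifts of $\sigma$ to a smooth cover, then $u := \tilde{\sigma}_\alpha - \tilde{\sigma}_\beta \in \mathcal{K}$ satisfies $u^2 = 0$, so $\tilde{\sigma}_\alpha^m - \tilde{\sigma}_\beta^m = m\tilde{\sigma}_\beta^{m-1}u$, which represents the class $m\sigma^{m-1}\delta(\sigma)$. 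Second, a \emph{torsion-power} observation: if $\sigma \in H^0(Y_j, \mathcal{M}|_{Y_j})$ restricts to an $n$-torsion section on $Y$, then $n\sigma$ vanishes on $Y$ and hence lies in $(\mathcal{J}/\mathcal{J}^{j+1}) \otimes \mathcal{M}|_{Y_j}$. Since $(\mathcal{J}/\mathcal{J}^{j+1})^{j+1} = 0$ in $\mathcal{O}_{Y_j}$, one obtains
\[
n^{j+1}\sigma^{j+1} = (n\sigma)^{j+1} = 0,
\]
so $\sigma^{j+1}$ is $n^{j+1}$-torsion.

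With these in hand, the induction runs as follows. Assume $s$ is $n$-torsion with $n \geq 2$ (the case $n = 1$ forces $s = 0$ and the statement is vacuous). Start with $\tau_0 := s$ and $k_0 = 1$. Given $\tau_j$ on $Y_j$ with $\tau_j|_Y = s^{k_j}$, the restriction $\tau_j|_Y$ is $n$-torsion since $ns^{k_j} = s^{k_j-1}(ns) = 0$, so by the torsion-power observation $\tau_j^{j+1}$ is $n^{j+1}$-torsion on $Y_j$. Take $m := n^{j+1}$; since $n^{j+1} \geq 2^{j+1} \geq j+2$, we have $m-1 \geq j+1$, so
\[
m\tau_j^{m-1} = n^{j+1}\tau_j^{j+1}\cdot\tau_j^{m-j-2} = 0.
\]
By the Leibniz rule, $\delta(\tau_j^m) = m\tau_j^{m-1}\delta(\tau_j) = 0$, so $\tau_j^m$ lifts to some $\tau_{j+1} \in H^0(Y_{j+1}, \mathcal{L}^{\otimes mk_j})$ still restricting to $s^{mk_j}$ on $Y$; set $k_{j+1} := mk_j$. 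After $N-1$ steps we obtain the desired power of $s$ extending to $Y'$.

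The main obstacle is that $n$-torsion does not simply propagate to intermediate lifts: a section $\tau_j$ lifting $s^{k_j}$ to $Y_j$ need not itself be $n$-torsion as a section of $\mathcal{L}^{\otimes k_j}$ on $Y_j$. The resolution is the observation that raising to the $(j+1)$-th power recovers torsion — with the exponent scaled up by the nilpotency depth — thanks to $(\mathcal{J}/\mathcal{J}^{j+1})^{j+1} = 0$; matching this against the Leibniz formula leaves just enough multiplicative room to annihilate the obstruction at every square-zero step, at the cost of replacing $s$ by a progressively higher power as we climb the filtration.
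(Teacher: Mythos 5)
Your proof is correct, and it follows the paper's overall structure (filter by square-zero stages, kill each obstruction using the Leibniz formula $\delta(\sigma^m) = m\sigma^{m-1}\delta(\sigma)$), but it handles the crucial point differently. You correctly identify the worry that an intermediate lift $\tau_j \in H^0(Y_j, \mathcal{L}^{\otimes k_j})$ need not remain torsion, and you resolve it by observing that $n\tau_j$ lies in $(\mathcal{J}/\mathcal{J}^{j+1}) \otimes \mathcal{L}^{\otimes k_j}$ and therefore $\tau_j^{j+1}$ is $n^{j+1}$-torsion; this forces you to raise the exponent geometrically as you climb the filtration. The paper's proof is shorter because of a different observation --- encoded in its parenthetical ``given the operation of $\mathcal{O}_{Y'}/\mathcal{I}$ on $\mathcal{I}$'' --- which makes your worry moot: the square-zero ideal at stage $j$ is $\mathcal{J}^{j+1}/\mathcal{J}^{j+2}$, which is annihilated by $\mathcal{J}$ and hence is already an $\mathcal{O}_Y$-module. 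Consequently the obstruction cohomology group is an $H^1$ over $Y$, and the pairing $\tau_j^{m-1}\cdot\delta(\tau_j)$ factors through $\tau_j^{m-1}|_Y = s^{k_j(m-1)}$; one may therefore take $m = n$ uniformly at every stage, since $n\,s^{k_j(n-1)} = 0$ is guaranteed by the original torsion of $s$ alone, with no need to track torsion of the lifts. Both arguments are valid; the paper's observation buys a cleaner, uniform exponent and a genuine one-step reduction to the square-zero case, while yours makes the tower induction fully explicit at the cost of heavier exponent bookkeeping.
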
 
\begin{proof} 
It suffices to consider a square-zero thickening $Y \subset Y'$, defined by a
square-zero sheaf of ideals $\mathcal{I}$ on $Y'$. Then we have an exact
sequence of sheaves
on $Y'$
\[ 0 \to \mathcal{I}\mathcal{L} \to \mathcal{L} \to \mathcal{L}
\otimes_{\mathcal{O}_{Y'}} \mathcal{O}_Y \to 0,  \]
and we consider a section $s$ of the last term. The obstruction to its lifting
is given by the coboundary $\delta s \in H^1( \mathcal{I} \mathcal{L})$. The
coboundary has the property $\delta( s^N) = N s^{N-1} \delta(s)$ (in a natural sense,
given the operation of $\mathcal{O}_{Y'}/\mathcal{I}$ on $\mathcal{I}$), which
vanishes for $N$ highly divisible by assumption. 
\end{proof} 

\begin{corollary} 
\label{cofibertorsion}
Let $\mathcal{F} \in \qcoh^\omega(\mathfrak{X})$. Suppose $\mathcal{F} $ is
supported (scheme-theoretically) on a closed substack $Z \subset X$. Suppose
$Z' \subset X$ is another closed substack with the same underlying 
set as $Z$
and $s \in H^0( Z', \omega^i)$ is a torsion section. Let $Z''$ be the closed
substack of $Z'$ cut out  by $s$. Then there exists $\mathcal{F}' \in
\qcoh^\omega( \mathfrak{X})$ whose set-theoretic support is precisely $Z''$.
\end{corollary}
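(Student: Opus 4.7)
The plan is to upgrade the torsion section $s$ to a section defined on a closed substack to which $\mathcal{F}$ is attached, apply \Cref{ourperiod} to realise it as a self-map of $\mathcal{F}$, and take the cofibre.

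First, because $Z$ and $Z'$ have the same underlying set, they admit a common closed thickening $W \subset X$: the closed substack cut out by $\mathcal{I}_Z \cap \mathcal{I}_{Z'}$ has the same underlying set as $Z$ and $Z'$, and contains both as nilpotent thickenings. Applying \Cref{froblift} to the inclusion $Z' \hookrightarrow W$ and the torsion section $s$, a sufficiently divisible power $s^N$ lifts to a section $\tilde s \in H^0(W, \omega^{iN})$. Restricting along $Z \hookrightarrow W$ gives $\sigma := \tilde s|_Z \in H^0(Z, \omega^{iN})$.

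Next, since $\mathcal{F}$ (and therefore $\mathcal{F} \wedge \mathbb{D}\mathcal{F}$) is scheme-theoretically supported on $Z$, \Cref{ourperiod} produces, for some $n \geq 1$, a self-map $\phi\colon \Sigma^{iNn}\mathcal{F} \to \mathcal{F}$ whose induced map on homotopy group sheaves is multiplication by $\sigma^n$. Set $\mathcal{F}' := \mathrm{cofib}(\phi)$, which is again perfect. To compute its set-theoretic support, feed the cofibre triangle through the long exact sequence of homotopy group sheaves. At any geometric point $p \notin Z$, both sides of $\phi$ vanish, so $\mathcal{F}'$ does as well. At a point $p \in Z$ where $\sigma(p)$ is a unit (i.e.\ $p \notin Z''$), $\phi$ induces an isomorphism of stalks, so $\mathcal{F}'$ vanishes at $p$. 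At a point $p \in Z''$ lying in the set-theoretic support of $\mathcal{F}$, the section $\sigma$ lies in the maximal ideal and so acts nilpotently on the finitely generated stalks of $\pi_*\mathcal{F}$; by Nakayama, $\phi$ fails to be surjective on stalks and $\mathcal{F}'$ is nontrivial at $p$. Hence the support of $\mathcal{F}'$ is precisely $Z''$.

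The main technical obstacle is the interplay between the two scheme structures: $s$ is defined on $Z'$ while $\mathcal{F}$ is an $\mathcal{O}_Z$-module, and $Z$ and $Z'$ may differ by nontrivial nilpotents even though their underlying sets agree. \Cref{froblift} bridges this gap, at the cost of replacing $s$ by a divisible power; once $\sigma$ has been transported to $Z$, the topology is supplied by \Cref{ourperiod} and the verification of support reduces to the stalkwise analysis above, in the familiar style of \cite{HS}.
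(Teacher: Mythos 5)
Your proof is correct and follows essentially the same route as the paper: apply \Cref{froblift} to transport (a power of) $s$ to a section on the scheme-theoretic support of $\mathcal{F}$, realize it as a self-map via \Cref{ourperiod}, and take the cofiber. You supply two details the paper leaves implicit — the explicit common nilpotent thickening $W$ cut out by $\mathcal{I}_Z \cap \mathcal{I}_{Z'}$, and the stalkwise verification that $\supp\mathcal{F}'$ is exactly $Z''$ (which, as both your argument and the statement tacitly assume, requires the set-theoretic support of $\mathcal{F}$ to be all of $Z$, as is the case in the application in the main theorem).
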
 
\begin{proof} 
By \Cref{froblift}, we may assume, after raising $s$ to an appropriate power,
that $Z' = Z$ and $s$ actually is a section over $Z$. In this case, we use
\Cref{ourperiod} to produce a self-map of $\mathcal{F}$ which induces
multiplication by some tensor power of $s$ on homotopy group sheaves. The
cofiber of this map can be taken to be $\mathcal{F}'$.
\end{proof}

\subsection{The algebraic setup; the rational piece}
\label{sec:algebraicsetup}

Let $Z \subset X$ be a closed substack. In this subsection, we will begin the
algebraic preliminaries in showing that there exists an object in 
$\qcoh^\omega(\mathfrak{X})$ set-theoretically supported on $Z$. 

\begin{definition} 

Recall (e.g., from \cite{goerssqc}) the covers 
\[ M_{FG}^{\mathrm{coord}, n} \to M_{FG},  \]
where 
$M_{FG}^{\mathrm{coord} , n}$ is the moduli stack of formal groups together
with a coordinate to degree $n$. 
Each of these covers is a torsor for the group $G$ that acts on coordinates to
degree $n$ (i.e., automorphisms of $\spec \mathbb{Z}[x]/x^{n+1}$). The group
$G$ has a map (which admits a splitting)
\[ G \twoheadrightarrow \mathbb{G}_m,  \]
by contemplating the action on the Lie algebra, and the kernel $H \subset G$ is
an iterated extension of copies of $\mathbb{G}_a$, 
This property of the group $G$ will become crucial below. 

\end{definition}

The inverse limit of the moduli stacks $M_{FG}^{\mathrm{coord}, n}$
parametrizes formal groups together with a coordinate: equivalently, formal
group laws. The inverse limit is thus the spectrum of the Lazard ring $L$. 
Since $X \times_{M_{FG}} \spec L$ is affine by hypothesis, one gets: 

\begin{proposition} 
$X \times_{M_{FG}} M_{FG}^{\mathrm{coord}, n}$ is affine for $n \gg 0$. 
\end{proposition}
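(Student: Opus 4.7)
The plan is to pull everything back along the cover $\spec L \to M_{FG}$ and then apply fpqc descent of affineness. Set $\spec B := X \times_{M_{FG}} \spec L$; this is an affine scheme by the hypothesis that $X \to M_{FG}$ is affine. The pro-algebraic group $G_\infty := \varprojlim_n G_n$ acts compatibly on $\spec L$ and on $\spec B$, exhibiting $X \simeq [\spec B / G_\infty]$. Setting $K_n := \ker(G_\infty \twoheadrightarrow G_n)$, one identifies $X_n := X \times_{M_{FG}} M_{FG}^{\mathrm{coord}, n}$ with $[\spec B / K_n]$, so that $\spec B \to X_n$ is a $K_n$-torsor.

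The first step is to show that $X_n$ is representable by an algebraic space for $n \gg 0$. The stabilizer of a geometric point of $X_n$ above $x \in X$ is the subgroup of the finite group $\mathrm{Aut}_X(x)$ (finite since $X$ is Deligne--Mumford) consisting of those automorphisms of the associated formal group whose action on the chosen coordinate is trivial to order $n$; equivalently, the intersection inside $G_\infty$ of $\mathrm{Aut}_X(x)$ with $K_n$. Since $\bigcap_n K_n = 1$ and each $\mathrm{Aut}_X(x)$ is finite, some $n$ suffices at each point, and a uniform bound over all of $X$ is obtained from noetherianness and quasi-compactness by stratifying according to stabilizer type.

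The second step applies fpqc descent of affineness. The group scheme $K_n$ is itself affine: it is the inverse limit, along affine (indeed closed) transition maps, of the finite-dimensional groups $\ker(G_m \to G_n)$, each of which is an iterated extension of copies of $\mathbb{G}_a$ by the structural hypothesis on $G$. Hence the self-fiber-product $\spec B \times_{X_n} \spec B \simeq \spec B \times K_n$ is affine, and $\spec B \to X_n$ is an fpqc cover with affine source and affine self-fiber-product. Since ``affine'' is an fpqc-local property on the target, the algebraic space $X_n$ is itself affine.

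The principal obstacle is extracting a uniform $n_0$ in the first step: pointwise existence of $n$ killing a given finite stabilizer is immediate, but producing a single $n$ that works globally on $X$ requires the noetherian Deligne--Mumford hypothesis and the constructible behaviour of stabilizers in families. A minor secondary point is that $K_n$ is non-Noetherian as a scheme (being $\spec$ of a polynomial ring in countably many generators), but this does not obstruct the descent argument.
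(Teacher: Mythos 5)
Your Step 1, showing that $X_n$ is an algebraic space for $n \gg 0$, is morally on the right track, though the claim that a uniform $n_0$ follows from ``constructible behaviour of stabilizers in families'' is left as a sketch; the paper obtains this more cleanly by observing that the diagonals $X^{(n)} \to X^{(n)} \times X^{(n)}$ form a tower of cartesian squares whose limit is a closed immersion, and invoking a limit theorem of Rydh to conclude that the diagonal is a closed immersion (hence $X^{(n)}$ is an algebraic space) for $n$ large.

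Step 2 contains a genuine error. The principle you invoke --- that if $X$ admits an fpqc cover $U \to X$ with $U$ and $U \times_X U$ affine then $X$ is affine --- is false. The standard Zariski cover $\mathbb{A}^1 \sqcup \mathbb{A}^1 \to \mathbb{P}^1$ already refutes it: the source and the self-fiber-product are affine, the map is faithfully flat and quasi-compact, but $\mathbb{P}^1$ is not affine. More to the point for your torsor setup, $SL_2 \to SL_2/\mathbb{G}_a \cong \mathbb{A}^2 \setminus \{0\}$ is a $\mathbb{G}_a$-torsor with affine total space, affine group, and affine self-fiber-product, yet $\mathbb{A}^2 \setminus \{0\}$ is not affine. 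So the fact that $\spec B \to X_n$ is a $K_n$-torsor with $\spec B$ and $K_n$ affine does not force $X_n$ to be affine; affineness is not fpqc-local on the target in this sense (what \emph{is} fpqc-local is affineness of a \emph{morphism}, which is a different statement). The argument that actually works is the one in the paper: the transition maps $X^{(n+1)} \to X^{(n)}$ are $\mathbb{G}_a$-torsors and hence affine morphisms, the inverse limit $X^{(\infty)} = X \times_{M_{FG}} \spec L$ is affine by hypothesis, and once the terms $X^{(n)}$ are known to be algebraic spaces for $n \gg 0$, one applies the general theorem that in an inverse system of quasi-compact quasi-separated algebraic spaces with affine transition maps and affine limit, some term in the tower is already affine (Stacks Project, Tag 07SQ). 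That limit argument is essential and cannot be replaced by a one-step descent from the cover $\spec B \to X_n$.
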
 
\begin{proof} 
Consider the tower of Deligne-Mumford stacks $ X^{(n)} = X \times_{M_{FG}^{\mathrm{coord},
n}} M_{FG}$ as $n$ varies. 
For $n > 0$, the successive maps in the tower are $\mathbb{G}_a$-torsors
and in particular are affine morphisms.

Moreover, the inverse limit of this tower, given by $X^{(\infty)}
\stackrel{\mathrm{def}}{=}X \times_{M_{FG}} \spec L$ is an affine scheme,
by hypothesis, so we want to claim that some term in the tower is itself an
affine scheme. For this, we argue first that for $n \gg 0$, 
the Deligne-Mumford stacks 
$X^{(n)}$ are \emph{algebraic spaces,} or equivalently, by \cite[Corollary
8.1.1]{LMB}, that they have no nontrivial automorphisms. 
In fact, consider the diagonal maps
\[ X^{(n)} \to X^{(n)} \times X^{(n)},  \]
for each $n$. These fit into a tower of cartesian squares as $n \to \infty$,
since for any morphism of stacks $Y_1 \to Y_2$, one has a cartesian square
\[ \xymatrix{
Y_1 \ar[d] \ar[r] &  Y_1 \times Y_1 \ar[d] \\
Y_2 \ar[r] &  Y_2 \times Y_2
}.\]
Since in the inverse limit, the map 
$X^{(\infty)} \to X^{(\infty)} \times X^{(\infty)}$ is a closed immersion, it
follows by \cite[Prop. B.3]{Rydh} that $X^{(n)} \to X^{(n)} \times X^{(n)}$ is
a closed immersion for $n \gg 0$. Thus, $X^{(n)}$ is an algebraic space for $n
\gg 0$.

Now, we can apply to the general theory 
of inverse limits of towers of
algebraic spaces under affine morphisms: by 
\cite[Tag 07SQ]{stacks-project}
if the inverse limit is affine, then
some term in the tower (and thus everything above it) must be affine, to
conclude. 
\end{proof} 
Fix one such $n$. Then we get a quotient stack presentation for $X$ as the
quotient of some affine scheme $\spec R = X \times_{M_{FG}}
M_{FG}^{\mathrm{coord}, n}$ by an action of the algebraic group
$G$. 
In particular, if $\mathcal{O}(G)$ denotes the ring of functions on $G$, then we get a
presentation for $X$ via a \emph{Hopf algebroid},
\begin{equation} \label{hopfalgebroid} \Gamma\colon \quad  R \rightrightarrows
R \otimes \mathcal{O}(G) \triplearrows \dots,  \end{equation}
\begin{remark}
Although we do not need this, these covers arise from certain ring spectra
$X(n)$. 
This Hopf algebroid can be realized in homotopy via the cobar construction
\[ \Gamma( \mathfrak{X} , \otop) \wedge X(n) \rightrightarrows  \Gamma(
\mathfrak{X} , \otop) \wedge X(n) \wedge X(n) \triplearrows \dots.\]
\end{remark}

Consider the setup above. 
If we forget the $\mathbb{G}_m$-action  but
remember the associated grading, the result is a \emph{graded} Hopf algebroid which presents the stack $X$. 
A given closed substack of $X \simeq \st(\Gamma( \mathfrak{X}, \otop))$ corresponds
to an invariant homogeneous ideal $I \subset R_*$. 

Our strategy will be, first, to choose \emph{globally invariant} elements $x_1, \dots, x_r$ that generate
$I$ \emph{rationally}, and which exist in homotopy in a very strong sense. Here
we use the fact that the stack $X$ is (up to $\mathbb{G}_m$-action) already
affine once we rationalize. After we do this, we need to add in more generators
to avoid introducing unnecessary irreducible components of the support. In the 
torsion case, however, the distinction between the set-theoretic and
scheme-theoretic support simplifies thanks to the Frobenius.

\begin{lemma} 
In the above setup, there exist invariant homogeneous elements $x_1, \dots, x_r$ in the Hopf
algebroid $\Gamma$ (from \eqref{hopfalgebroid}) that generate $I$ rationally.  
In the language of stacks, there are sections $x_i \in H^0( X, \omega^{k_i})$
which cut out the closed substack $Z$ rationally. 
\end{lemma}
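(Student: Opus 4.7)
The plan is to exploit the fact that, rationally, the stack $X$ is particularly simple: since $X \to M_{FG}$ is affine and $(M_{FG})_{\mathbb{Q}} \simeq B\mathbb{G}_m$, the rationalization $X_{\mathbb{Q}}$ is the quotient of an affine scheme by $\mathbb{G}_m$. Explicitly, $X_{\mathbb{Q}} \simeq (\spec S)/\mathbb{G}_m$ with $S = \bigoplus_{k \in \mathbb{Z}} H^0(X_{\mathbb{Q}}, \omega^k)$, and under this identification closed substacks of $X_{\mathbb{Q}}$ correspond to homogeneous ideals of $S$. This reduces the problem to finding integral lifts of finitely many homogeneous ideal generators.

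First, I would observe that the closed substack $Z_{\mathbb{Q}} \subset X_{\mathbb{Q}}$ corresponds to a homogeneous ideal $J \subset S$. Since $X$, and thus $X_{\mathbb{Q}}$, is noetherian, $J$ is finitely generated, so I may choose homogeneous generators $\bar{x}_1, \dots, \bar{x}_r$ with $\bar{x}_i \in S_{k_i} = H^0(X_{\mathbb{Q}}, \omega^{k_i})$.

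Next, I lift each $\bar{x}_i$ to an integral section by clearing denominators. On the noetherian quasi-compact quasi-separated stack $X$, the functor $H^0(X, -)$ commutes with filtered colimits of quasi-coherent sheaves, and since $\omega^k$ is a line bundle on the regular stack $X$ (hence $\mathbb{Z}$-torsion-free), one obtains a natural isomorphism
\[ H^0(X, \omega^k) \otimes_{\mathbb{Z}} \mathbb{Q} \simeq H^0(X_{\mathbb{Q}}, \omega^k). \]
Consequently, each $\bar{x}_i$ can be written as $(1/N_i) x_i$ for some positive integer $N_i$ and some $x_i \in H^0(X, \omega^{k_i})$. Each such $x_i$ is an invariant homogeneous element of the Hopf algebroid $\Gamma$ of degree $k_i$, and rationally the $x_i$'s generate the same ideal as the $\bar{x}_i$'s, which is $I_{\mathbb{Q}}$, as required.

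The argument is largely formal once one sets up the dictionary between the graded Hopf algebroid and the stack $X_{\mathbb{Q}}$; the main point to verify is the commutativity of $H^0$ with rationalization, which follows from the noetherian hypothesis together with regularity of $X$. Note that this lemma only addresses the rational part of $I$; producing generators to handle torsion (and controlling the set-theoretic versus scheme-theoretic support) will require the separate input of \Cref{ourperiod} and \Cref{cofibertorsion}.
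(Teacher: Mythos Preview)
Your proof is correct and follows essentially the same route as the paper: pass to the rational stack $X_{\mathbb{Q}} \simeq (\spec C)/\mathbb{G}_m$, take homogeneous generators of the ideal cutting out $Z_{\mathbb{Q}}$, and then clear denominators to get sections over $X$. You have supplied a more explicit justification for the clearing-denominators step (via $H^0(X,\omega^k)\otimes\mathbb{Q}\simeq H^0(X_{\mathbb{Q}},\omega^k)$), whereas the paper simply asserts that multiplying by a highly divisible integer suffices; the torsion-freeness of $\omega^k$ is not actually needed for this isomorphism, only the commutation of $H^0$ with filtered colimits.
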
 
\begin{proof} 
We will prove this 
using a \emph{different} presentation from \eqref{hopfalgebroid}.
On rationalizations,  $X_{\mathbb{Q}} \to (M_{FG})_{\mathbb{Q}} \simeq B
\mathbb{G}_m$ is affine, so $X_{\mathbb{Q}}$ is the $\mathbb{G}_m$-quotient of
an affine scheme $ \spec C$ with a $\mathbb{G}_m$-action (i.e., grading). Now
a closed substack of $X_{\mathbb{Q}} \simeq (\spec C)/\mathbb{G}_m$ is defined
by a $\mathbb{G}_m$-invariant ideal of $C$, or a homogeneous ideal of $C$. We
can take $x_1, \dots, x_r $ as homogeneous elements of $C$ (i.e., sections of
$H^0( X_{\mathbb{Q}}, \omega^{k_i})$) which generate this homogeneous ideal.
Multiplying by a highly divisible integer, we may assume they extend to
sections over $X$. 
\end{proof}

\begin{proposition} 
\label{rationalexistence}
Given the closed substack $Z \subset X$,
there exists $\mathcal{F}' \in \qcoh^\omega(\mathfrak{X})$ such that $\supp
\mathcal{F}' \supset Z$ and $(\supp \mathcal{F}')_{\mathbb{Q}} =
Z_{\mathbb{Q}}$.
\end{proposition}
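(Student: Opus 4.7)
The plan is to build $\mathcal{F}'$ as an iterated cofiber of self-maps of the unit object $\otop \in \qcoh^\omega(\mathfrak{X})$, where each self-map is induced by a lift of one of the sections $x_1,\dots,x_r$ produced by the preceding lemma. First I refine that lemma so each $x_i$ actually vanishes on $Z$, not just on $Z_{\mathbb{Q}}$: since $x_i$ vanishes on $Z_{\mathbb{Q}}$ by construction, it lifts to a class in $H^0(X, I_Z \otimes \omega^{k_i})_{\mathbb{Q}}$ by flat base change for $\mathbb{Z} \to \mathbb{Q}$, where $I_Z \subset \mathcal{O}_X$ is the ideal sheaf of $Z$; multiplying by a highly divisible integer produces a representative in $H^0(X, I_Z \otimes \omega^{k_i})$ itself. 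Thus the $x_i$ are genuine sections on $X$ which vanish on $Z$ set-theoretically and whose common zero locus over $X_{\mathbb{Q}}$ is exactly $Z_{\mathbb{Q}}$.

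Next I lift each $x_i$ from sheaf cohomology to a homotopy class. Viewing $x_i$ as an element of $E_2^{0,2k_i} = H^0(X,\omega^{k_i})$ in the descent spectral sequence for $\pi_* \Gamma(\mathfrak{X},\otop)$, we observe that (i) $x_i$ sits in $H^0$ and even total degree, hence is central; (ii) every $E_2^{s,t}$ with $s>0$ is torsion, since $X_{\mathbb{Q}} \to (M_{FG})_{\mathbb{Q}} \simeq B\mathbb{G}_m$ is affine and so $X_{\mathbb{Q}}$ is a $\mathbb{G}_m$-quotient of an affine scheme with no higher coherent cohomology; and (iii) the spectral sequence degenerates at a finite page by \Cref{vanishingline}. \Cref{sshorline} then guarantees that some power $x_i^{N_i}$ survives to a genuine class in $\pi_{2 k_i N_i} \Gamma(\mathfrak{X},\otop)$, yielding under the equivalence $\qcoh(\mathfrak{X}) \simeq \md(\Gamma(\mathfrak{X},\otop))$ a self-map $\Sigma^{2 k_i N_i}\otop \to \otop$ acting as multiplication by $x_i^{N_i}$ on homotopy group sheaves. (Equivalently, this is \Cref{ourperiod} applied to $\mathcal{F}=\otop$ with ambient $Z=X$.)

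Finally, let $\mathcal{F}' \in \qcoh^\omega(\mathfrak{X})$ be the iterated cofiber of these $r$ self-maps. For any point $p \in X$ represented by a prime $\mathfrak{p}$ in some \'etale presentation, iterating the defining cofiber sequences yields $K(\mathfrak{p})_* \mathcal{F}' \cong K(\mathfrak{p})_*/(x_1(p)^{N_1},\dots,x_r(p)^{N_r})$; since $K(\mathfrak{p})_* \cong k(\mathfrak{p})[t^{\pm 1}]$, this is nonzero precisely when every $x_i$ vanishes at $p$. Hence $\supp \mathcal{F}' = V(x_1,\dots,x_r)$, which contains $Z$ (the $x_i$ vanish on $Z$) and whose rationalization equals $Z_{\mathbb{Q}}$ (the $x_i$ cut $Z$ out rationally). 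The main obstacle is the middle step: promoting the algebraic classes $x_i$ to honest homotopy classes requires both the horizontal vanishing line of \Cref{vanishingline} and the torsion-ness of all target groups for potential differentials, so that a suitably high power of each $x_i$ is forced to be a permanent cycle.
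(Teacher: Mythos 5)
Your proposal is correct and follows essentially the same route as the paper's own proof: pick the sections $x_i$ that cut out $Z$ rationally, multiply by a divisible integer so they vanish on $Z$ itself, use \Cref{vanishingline} and \Cref{sshorline} to promote high powers to elements of $\pi_*\Gamma(\mathfrak{X},\otop)$, and take $\mathcal{F}'$ to be the iterated quotient $\otop/x_1^{N_1}\wedge\dots\wedge\otop/x_r^{N_r}$. Your write-up is a bit more explicit than the paper in two places — the flat-base-change justification for the refinement step and the $K(\mathfrak{p})_*$ computation of the support — but these are just details the paper leaves implicit, not a different argument.
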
 
\begin{proof} 
Choose sections $x_1, \dots, x_r \in H^0( X, \omega^{\otimes \bullet})$ such
that the closed substack of $X$ cut out by the $\left\{x_i\right\}$ is equal,
rationally, to $Z$. After multiplying the $x_i$ by a sufficiently divisible
integer, we may assume the $x_i$ vanish along $Z$ as well. After raising the
$x_i$ to a sufficiently divisible power, we may assume (by
\Cref{vanishingline} and \Cref{sshorline})
that each $x_i$ survives to an element in $\pi_*( \Gamma( \mathfrak{X},
\otop))$.
Then, we can take $\mathcal{F}' = \otop/x_1 \wedge \dots \wedge \otop/x_r$.
\end{proof}

\subsection{The torsion piece}
In this subsection, we prove some algebraic lemmas needed to handle the torsion. 
Throughout, let $B$ be a base ring, assumed noetherian. 
Let $G$ be an algebraic group over $B$ with the property that $G$ fits into 
an exact sequence
of group schemes
\[ 1 \to H \to G \to \mathbb{G}_m \to 1,  \]
where the map $G \to \mathbb{G}_m$ has a section (so that $G$ is a semidirect
product). 
Suppose $H$ has a finite filtration with successive quotients isomorphic to
$\mathbb{G}_a$. 
Observe that on any $G$-quotient, there is a natural line bundle $\omega$
obtained from the map $G \to \mathbb{G}_m$, the standard one-dimensional representation
of $\mathbb{G}_m$, and the Borel construction. 
Throughout, a \emph{representation} of an algebraic group (always over the base
ring $B$) will refer to a $B$-module together with a coaction of the associated
Hopf algebra. Given a representation, the \emph{fixed points}  consist of the
primitive vectors under the coaction map. See \cite{waterhouse} for a
discussion over fields. 

\begin{lemma} 
Let $M$ be a $\mathbb{G}_a$-representation. Then if $M \neq 0$, $M^{\mathbb{G}_a} \neq 0$. 
\end{lemma}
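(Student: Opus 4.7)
The plan is to exploit the explicit form of the coaction. A $\mathbb{G}_a$-representation on a $B$-module $M$ is, by the definition recalled in the excerpt, a counital coassociative comodule map $\Delta\colon M \to M \otimes_B B[t]$, where $B[t] = \mathcal{O}(\mathbb{G}_a)$. I will write $\Delta(m) = \sum_{i \geq 0} D_i(m) \otimes t^i$; since the codomain is a direct sum, only finitely many $D_i(m)$ are nonzero for each $m$, and counitality gives $D_0 = \mathrm{id}_M$. The fixed points $M^{\mathbb{G}_a}$ are exactly the primitives, i.e., the $m \in M$ with $D_i(m) = 0$ for all $i \geq 1$.

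Given a nonzero $m \in M$, I will set $n = n(m) \geq 0$ to be the largest index with $D_n(m) \neq 0$, which exists by the finiteness above. If $n = 0$ then $m$ is itself a primitive and we are done; otherwise the claim is that $D_n(m)$ is a nonzero fixed vector.

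To verify the claim, I will expand coassociativity $(\Delta \otimes \mathrm{id})\Delta(m) = (\mathrm{id} \otimes \Delta_{\mathbb{G}_a})\Delta(m)$ using $\Delta_{\mathbb{G}_a}(t^i) = \sum_k \binom{i}{k}\, t^k \otimes t^{i-k}$ and read off the coefficient of $t^j \otimes t^i$ on each side. This produces the divided-power identity
\[
D_j D_i \;=\; \binom{i+j}{j}\, D_{i+j}.
\]
Taking $i = n$ and $j \geq 1$ yields $D_j(D_n(m)) = \binom{n+j}{j}\, D_{n+j}(m) = 0$ by maximality of $n$. Hence $D_n(m)$ is primitive, and it is nonzero by construction, so $M^{\mathbb{G}_a} \neq 0$.

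There is no real obstacle here: crucially, the argument is characteristic-free, because I never need to invert the binomial coefficient $\binom{n+j}{j}$ — the vanishing $D_{n+j}(m) = 0$ takes care of the right-hand side first. This is the algebraic incarnation, valid over an arbitrary base ring $B$, of the familiar fact that a nonzero locally nilpotent derivation on a module has nonzero kernel.
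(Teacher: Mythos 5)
Your argument is correct and is essentially the paper's proof: both take the highest nonvanishing coefficient $m_n = D_n(m)$ in the coaction expansion and use coassociativity (comparing coefficients of $t^n\otimes(\cdots)$, equivalently your identity $D_j D_n = \binom{n+j}{j}D_{n+j}$) to show it is primitive. Your packaging via the divided-power operators $D_i$ is a slightly cleaner way of reading off the same coefficient comparison, but the idea and the steps are identical.
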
 
\begin{proof} 
Fix $m \neq 0 $ in $M$. Consider the coaction map $\psi\colon M \to M
\otimes_{B} B[t]$, and suppose:
\begin{equation} \label{ga1} \psi(m) = \sum_{i=0}^\infty t^i \otimes m_i, \quad m_i =0 \text{ for } i
\gg 0.  \end{equation}
Recall that $m_0 = m$, and in particular $\psi$ is injective. 
Let $n$ be maximal such that $m_n \neq 0$. Then $m_n$ is
$\mathbb{G}_a$-invariant. 
In fact, 
we get an equality from coassociativity,
\[ \sum_{i = 0}^n t^i \otimes \psi(m_i) = \sum_{i=0}^n \sum_{j=0}^i
\binom{i}{j} t^j \otimes t^{i-j} \otimes m_i , \]
and comparing terms of $t^n \otimes (\dots)$ shows that $\psi(m_n) = 1 \otimes
m_n$. 
\end{proof}

\begin{lemma} 
\label{ganilp}
Let $A$ be a $B$-algebra with 
an action of the algebraic group $H$. 
Let $I \subset A$ be a $H$-invariant
torsion ideal. Suppose
$I^H$ consists of nilpotent elements. Then $I$ is nilpotent. 
\end{lemma}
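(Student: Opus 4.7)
The plan is to prove this by d\'evissage on the $\mathbb{G}_a$-filtration of $H$, reducing to the case $H = \mathbb{G}_a$, and then to leverage the torsion hypothesis together with a Frobenius-style argument. For the reduction, if $H' \triangleleft H$ is a normal subgroup with $H/H' \cong \mathbb{G}_a$, then $I^H = (I^{H'})^{\mathbb{G}_a}$ and $I^{H'}$ is a $\mathbb{G}_a$-invariant torsion ideal of $A^{H'}$. Applying the $\mathbb{G}_a$-case to $I^{H'}$ would yield that $I^{H'}$ consists of nilpotent elements of $A^{H'}$ (equivalently of $A$), and the inductive hypothesis for $H'$ (with shorter filtration) would then conclude that $I$ itself consists of nilpotent elements.

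For the $\mathbb{G}_a$-case, I would first decompose $I = \bigoplus_p I[p^\infty]$ into $p$-primary torsion parts, each of which is a $\mathbb{G}_a$-invariant ideal of $A$, so as to reduce to the case that $I$ is $p$-primary torsion for a fixed prime $p$. Given $a \in I$ with $p^N a = 0$, I write $\psi(a) = \sum_{i=0}^d t^i \otimes a_i$; all $a_i$ lie in $I$ by $\mathbb{G}_a$-invariance and satisfy $p^N a_i = 0$. Coassociativity yields $\psi(a_i) = \sum_{j \geq 0} s^j \binom{i+j}{i} a_{i+j}$, so $a_i$ itself has coaction degree at most $d - i$. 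In particular $a_d$ is $\mathbb{G}_a$-invariant, hence lies in $I^{\mathbb{G}_a}$ and is nilpotent, and a strong induction on $d$ then shows that every $a_i$ with $i \geq 1$ is nilpotent in $A$.

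The final step is a Frobenius-style argument. Writing $\psi(a) = a + y$ for $y = \sum_{i \geq 1} t^i \otimes a_i$, the element $y$ is nilpotent in $A[t]$ (a finite sum of nilpotent elements in a commutative ring), say $y^M = 0$, and also $p^N y = 0$. Expanding $\psi(a^{p^k}) = (a+y)^{p^k}$ binomially and applying Kummer's formula $v_p(\binom{p^k}{j}) = k - v_p(j)$, I find that for $k$ sufficiently large every term $\binom{p^k}{j} a^{p^k-j} y^j$ with $0 < j < M$ vanishes, since $p^N$ divides $\binom{p^k}{j}$ while $p^N y^j = 0$. Therefore $\psi(a^{p^k}) = a^{p^k}$, so $a^{p^k}$ lies in $I^{\mathbb{G}_a}$ and is nilpotent by hypothesis, forcing $a$ itself to be nilpotent. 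The main subtlety will be coordinating the strong induction on coaction degree with the $p$-adic valuation bounds on binomial coefficients, which is what makes the torsion hypothesis essential.
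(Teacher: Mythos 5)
Your proof is correct and takes essentially the same route as the paper: d\'evissage along the $\mathbb{G}_a$-filtration of $H$, then for $\mathbb{G}_a$ the coassociativity identity $\psi(a_i)=\sum_j\binom{i+j}{i}a_{i+j}\,s^j$ to control the coaction coefficients (in particular the top one is invariant), plus the device of raising to a highly divisible power so that a torsion nilpotent perturbation drops out. Your $p$-primary reduction (so that ``highly divisible'' becomes a clean $p^k$ controlled by Kummer's formula) and your explicit strong induction on coaction degree are a tidier packaging of the paper's more implicit loop, which repeatedly kills the top coefficient and ``continues in this way'' down to $x_0=x$, but the underlying mechanism is the same.
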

\begin{proof} 
We first consider the case of $H = \mathbb{G}_a$. 
Let $\psi\colon A \to A \otimes_B B[t]$ be the coaction map. 
Choose $x \in I$, and write $\psi(x) = \sum_{i=0}^n t^i \otimes x_i$ for the
$x_i \in I$ and for some $n$. By the proof of \eqref{ga1}, it follows that $x_n$ is
$H$-invariant and therefore, by assumption, nilpotent. 
Choose $N$ highly divisible and so large that $x_n^N = 0$. 

In this case, it follows that 
$$\psi(x^N) = \left(\sum_{i=0}^{n} t^i \otimes x_i \right)^N
= \left(\sum_{i=0}^{n-1} t^i \otimes x_i \right)^N,$$
because $N$ is highly divisible and $x_n$ is torsion and nilpotent. 
(In general, if $a$ is torsion and nilpotent, then $(a+b)^N = b^N$ for $N$
sufficiently highly divisible.) 
Therefore, when one expands $\psi(x^N) = \sum_{j=0}^\infty t^j \otimes n_j$,
the largest $j$ that appears is $j = N(n-1)$, and that term is $t^{N(n-1)}
\otimes x_{n-1}^N$. It follows that $x_{n-1}^N$ is $H$-invariant and therefore
nilpotent. Continuing in this way, we can work our way down to conclude that
all the $x_i$, and in particular $x_0 = x$, are nilpotent. 

In general, if $H$ is not assumed to be isomorphic to $\mathbb{G}_a$, choose an
exact sequence
\[ 1 \to H' \to H \to \mathbb{G}_a\to 1,  \]
and assume inductively that the lemma is valid for $H'$. In particular, if $I$
is not nilpotent, it
follows that $I^{H'} \subset A^{H'}$ contains a non-nilpotent element $x$. The group
$\mathbb{G}_a$ acts on $A^{H'}$ and $I^{H'}$, and in particular
$(I^{H'})^{\mathbb{G}_a}$ must contain a non-nilpotent element by the case of 
the lemma already proved. But then $I^{H}$ contains a non-nilpotent element, a
contradiction. 
\end{proof} 

 \begin{proposition} \label{algfact} \label{finallem} 
Let $Y$ be an Artin stack obtained as $Y \simeq \spec R/G$, where $R$ is a
noetherian ring. Then 
given a closed substack $T \subset Y$ such that $T_{\mathbb{Q}} =
Y_{\mathbb{Q}}$, there exists a sequence of closed
substacks
\[ Y \supset Y_1 \supset Y_2 \supset \dots \supset Y_r  \supset T,\]
such that:
\begin{enumerate}
\item There exists an element $y_i \in H^0( Y_i, \omega^{k_i} )$ such that
$Y_{i+1}$ is the zero locus of $y_i$. 
\item $Y_r$ is a nilpotent thickening of $T$. 
\end{enumerate}
\end{proposition}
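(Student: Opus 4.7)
The plan is to construct the chain inductively by repeatedly applying the nilpotence lemma \Cref{ganilp} at each stage. Write $Y = \spec R / G$ and note that the closed substack $T \subset Y$ is cut out by a $G$-invariant homogeneous ideal $I \subset R$. The hypothesis $T_{\mathbb{Q}} = Y_{\mathbb{Q}}$ translates precisely into the statement that $I$ is a torsion ideal of $R$. Each $Y_i$ will have the form $\spec(R/J_i)/G$ for a $G$-invariant homogeneous ideal $J_i \subseteq I$, starting with $J_0 = 0$, and arranged so that $\sqrt{J_i}$ strictly enlarges at every step.

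For the inductive step, suppose that $Y_i$ has been built with $\sqrt{J_i} \subsetneq \sqrt{I}$. Then $I/J_i \subset R/J_i$ is an $H$-invariant torsion ideal containing a non-nilpotent element. By \Cref{ganilp}, the submodule of $H$-invariants $(I/J_i)^H$ must also contain a non-nilpotent element $y$. Since the $\mathbb{G}_m$-action makes $R/J_i$ into a $\mathbb{Z}$-graded ring and the nilradical of a $\mathbb{Z}$-graded commutative ring is homogeneous (by repeatedly isolating top-degree components), some homogeneous piece of $y$, of weight $k_i$ say, is still non-nilpotent. This piece $y_i$ is simultaneously $H$-invariant and of pure $\mathbb{G}_m$-weight, so it defines an honest section $y_i \in H^0(Y_i, \omega^{k_i})$ which vanishes along $T$ (since it lies in $I/J_i$) but is not nilpotent on $Y_i$. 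Taking $Y_{i+1}$ to be the zero locus of $y_i$ produces a closed substack of $Y_i$ containing $T$ and defined by the $G$-invariant ideal $J_{i+1} = J_i + y_i R \subseteq I$, with $\sqrt{J_{i+1}} \supsetneq \sqrt{J_i}$.

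Since $R$ is noetherian, the ascending chain $\sqrt{J_0} \subsetneq \sqrt{J_1} \subsetneq \cdots \subseteq \sqrt{I}$ must terminate at some stage $r$, and termination can only occur once $\sqrt{J_r} = \sqrt{I}$, which is exactly the statement that $Y_r$ is a nilpotent thickening of $T$. The delicate point, and the main obstacle to making the argument go through, is that \Cref{ganilp} produces an $H$-invariant element but not a $G$-invariant one; demanding full $G$-invariance would pin the $\mathbb{G}_m$-weight and would generally be too rigid. What saves us is the combination of $H$-invariance with the homogeneity of the nilradical in a $\mathbb{G}_m$-graded ring, which together isolate a non-nilpotent section of a specific line bundle $\omega^{k_i}$ and thereby let the induction proceed.
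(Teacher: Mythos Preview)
Your argument is correct and follows essentially the same approach as the paper: both iterate \Cref{ganilp} to peel off one non-nilpotent $H$-invariant section at a time, and both rely on noetherianness for termination. You supply a bit more detail than the paper on two points the paper leaves implicit---why the $H$-invariant element may be taken homogeneous (via $\mathbb{G}_m$-stability of $H$-invariants and homogeneity of the nilradical) and why the process terminates (via the strictly ascending chain of radicals $\sqrt{J_i}$)---but the strategy is the same.
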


\begin{proof}
The stack $Y$ is represented by a Hopf algebroid obtained by the $G$-action on
$\spec R$. The closed substack $T \subset Y$ corresponds to a $G$-invariant
ideal $I \subset R$ with the property that $I \otimes_{\mathbb{Z}} \mathbb{Q}
= 0$. It suffices to show that there exist homogeneous
 elements $y_1, \dots,
y_r \in R$ such that
\begin{enumerate}
\item The image of $y_i$  in $H$-invariant in $R/ (y_1, \dots, y_{i-1})$ (which
is inductively a $G$-representation by this assumption). 
\item $(y_1, \dots, y_r)$ contains a power of $I$. 
\end{enumerate}

To do this, note first that we may assume $I$ non-nilpotent. In this case,
\Cref{ganilp} gives us a non-nilpotent $H$-invariant element $y_1 \in I$,
which we can assume homogeneous. We can now form the
quotient $R/(y_1)$, which defines a 
proper closed $G$-invariant subscheme of  $\spec R$, or equivalently a proper
closed substack $Y_1 \subset Y$, which contains $T$. Now, apply \Cref{ganilp} again
to the $H$-action on $R/(y_1)$ and the image of $I$ in here, and continue to get the descending sequence of
substacks and the $y_i$. The process stops once the image of $I$ in $R/(y_1,
\dots, y_r)$ is nilpotent, at which point we have gotten down to a nilpotent
thickening of $T$. 
\end{proof}

\subsection{Proof of the main theorem}
We now restate and prove the main theorem from the introduction.
\begin{theorem} 
The construction $Z \mapsto \qcoh_Z^\omega(\mathfrak{X})$ establishes a
bijection between specialization-closed subsets of the underlying space of $X$
and thick subcategories of $\md^\omega( \Gamma( \mathfrak{X}, \otop)) \simeq
\qcoh^\omega( \mathfrak{X})$.
\end{theorem}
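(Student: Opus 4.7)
The surjectivity of $Z \mapsto \qcoh^\omega_Z(\mathfrak{X})$ is already in hand from \Cref{possiblethick}, so my plan is to focus on injectivity. If $Z \neq Z'$ are distinct specialization-closed subsets of $X$, then for any $p \in Z \setminus Z'$ the closure $\overline{\{p\}}$ is a closed subset of $X$ contained in $Z$ but not in $Z'$. Consequently, injectivity reduces to the following claim: for every closed subset $W \subset X$, there exists $\mathcal{F} \in \qcoh^\omega(\mathfrak{X})$ with $\supp \mathcal{F} = W$ (on the nose).

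To produce such an $\mathcal{F}$, I would proceed in two stages. First, apply \Cref{rationalexistence} to obtain an initial $\mathcal{F}_0 \in \qcoh^\omega(\mathfrak{X})$ whose set-theoretic support $Y$ contains $W$ and agrees with $W$ after rationalization; the excess $Y \setminus W$ is then concentrated in positive characteristic. Next, invoking the presentation $X = (\spec R)/G$ from Section \ref{sec:algebraicsetup} and applying \Cref{finallem} to $Y$ with $T = W$, I obtain a descending chain
\[
Y = Y_0 \supset Y_1 \supset \cdots \supset Y_r,
\]
in which each $Y_{i+1}$ is cut out of $Y_i$ by a torsion homogeneous section $y_i \in H^0(Y_i, \omega^{k_i})$, and $Y_r$ is a nilpotent thickening of $W$.

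Finally, I would apply \Cref{cofibertorsion} iteratively. Starting from $\mathcal{F}_0$, which is scheme-theoretically supported on some closed substack with underlying set $Y_0$, I would at each stage raise $y_i$ to a sufficiently divisible power so that (by \Cref{froblift}) it extends across the current scheme-theoretic support, then invoke \Cref{cofibertorsion} to produce $\mathcal{F}_{i+1}$ whose set-theoretic support drops to the underlying set of $Y_{i+1}$. After $r$ such steps, $\mathcal{F}_r$ has set-theoretic support equal to that of $Y_r$, which is $W$.

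The main obstacle I expect is the mismatch between scheme-theoretic support (which \Cref{cofibertorsion} requires as input) and set-theoretic support (which it delivers as output). This is precisely the gap that \Cref{finallem} is designed to close: by exploiting the $G$-equivariance together with the unipotent filtration of $\ker(G \to \mathbb{G}_m)$ by copies of $\mathbb{G}_a$, it produces at each stage an \emph{honest} $H$-invariant section on the currently available scheme-theoretic substack, rather than merely a section modulo nilpotents. The lift of each $y_i$ to an element of $\pi_\ast \Gamma(\mathfrak{X}, \otop)$, needed to realize the cutting operation as a self-map of the sheaf through \Cref{ourperiod}, is then guaranteed by the horizontal vanishing line of \Cref{vanishingline} combined with \Cref{sshorline}.
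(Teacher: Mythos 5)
Your proposal is correct and follows the same route as the paper's proof: \Cref{possiblethick} supplies one direction, \Cref{rationalexistence} produces the initial sheaf that is right rationally, \Cref{finallem} (powered by the $G$-equivariant Hopf-algebroid presentation from \Cref{sec:algebraicsetup}) supplies the descending chain of substacks cut out by torsion sections, and \Cref{cofibertorsion} together with \Cref{froblift} implements each step of the chain on the sheaf side. The only cosmetic difference is that you phrase the reduction explicitly as injectivity of $Z\mapsto \qcoh^\omega_Z(\mathfrak{X})$ via realizing the closure of an arbitrary point, whereas the paper simply asserts that realizing every closed subset as a support suffices; the content is the same.
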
 

\begin{proof}

It thus suffices to show that, given a closed substack $Z \subset X$, there
exists $\mathcal{F} \in \qcoh^\omega(\mathfrak{X})$ such that the
(set-theoretic) support of $\mathcal{F}$ is precisely $Z$.

By \Cref{rationalexistence}, there exists $\mathcal{F}' \in \qcoh^\omega( \mathfrak{X})$
such that the homotopy group sheaves of $\mathcal{F}'$ are supported
scheme-theoretically on a closed substack $Z'$ of $X$ with $Z' \supset Z$ and
$Z'_{\mathbb{Q}}  = Z_{\mathbb{Q}}$.
Moreover, by \Cref{finallem} (which is applicable in view of the discussion
in \Cref{sec:algebraicsetup}), there exists a descending sequence
of closed substacks
\[ Z' = Z_{1} \supset Z_{2}\supset \dots \supset Z_m \supset Z,  \]
and torsion sections $\overline{x}_i \in H^0( Z_i, \omega^{k_i})$ for $1 \leq i \leq
m$, such that:
\begin{itemize}
\item  $Z_{i+1}$ is the zero locus of $\overline{x}_i$ on $Z_i$. 
\item $Z_m$ is a nilpotent thickening of $Z$. 
\end{itemize}

We claim that, for each $i =  1, 2, \dots, m$, there exists $\mathcal{F}_i \in
\qcoh^\omega( \mathfrak{X})$ such that the set-theoretic support of
$\mathcal{F}_i$ is precisely $Z_i$. We prove this by induction on $i$. For $i =
1$, we can take $\mathcal{F}'$.
If we have proved the assertion for $i$, then the assertion follows for $i + 1$
by \Cref{cofibertorsion}. 
Taking $i = m-1$, we have proved our result. 
\end{proof} 
\newtheorem*{question}{Question}
\begin{question} 
Does $X$ need to be regular for the results of this paper to hold? 
\end{question} 

\bibliographystyle{alpha}
\bibliography{thick}

\end{document}